\documentclass[reqno]{amsart}
\usepackage{latexsym}
\usepackage[english]{babel}
\usepackage{xspace}
\usepackage[bookmarksnumbered,colorlinks]{hyperref}
\usepackage{graphics}

\newcommand{\bt}{\begin{teo}}
\newcommand{\et}{\end{teo}}
\newcommand{\bco}{\begin{cor}}
\newcommand{\eco}{\end{cor}}
\newcommand{\bd}{\begin{defn}}
\newcommand{\ed}{\end{defn}}
\newcommand{\bl}{\begin{lem}}
\newcommand{\el}{\end{lem}}
\newcommand{\bpr}{\begin{prop}}
\newcommand{\epr}{\end{prop}}
\newcommand{\bere}{\begin{remark}}
\newcommand{\ere}{\end{remark}}
\newcommand{\beq}{\begin{equation}}
\newcommand{\eeq}{\end{equation}}
\def\bal#1\eal{\begin{align}#1\end{align}}
\def\baln#1\ealn{\begin{align*}#1\end{align*}}
\def\bml#1\eml{\begin{multline}#1\end{multline}}
\def\bmln#1\emln{\begin{multline*}#1\end{multline*}}
\def\bga#1\ega{\begin{gather}#1\end{gather}}
\def\bgan#1\egan{\begin{gather*}#1\end{gather*}}
\newcommand{\de}{\mathrm{d}}

\newcommand{\N}{\ensuremath{\mathbb{N}}\xspace}
\newcommand{\R}{\ensuremath{\mathbb{R}}\xspace}
\newcommand{\eps}{\varepsilon}

\newcommand{\inte}{\int_a^b\!\!}

\newcommand{\ro}{r_0} 
\newcommand{\roo}{{r_0}} 

\newtheorem{teo}{Theorem}[section]
\newtheorem{lem}[teo]{Lemma}
\newtheorem{defn}[teo]{Definition}
\newtheorem{cor}[teo]{Corollary}
\newtheorem{prop}[teo]{Proposition}
\theoremstyle{remark}
\newtheorem{remark}[teo]{Remark}
\theoremstyle{definition}
\newtheorem{example}[teo]{Example}

\newcommand{\be}{\begin{equation}}
\newcommand{\ee}{\end{equation}}

\newcommand{\gam}{\gamma}

\hyphenation{Lo-ren-tzian}

\title[Convex regions of stationary spacetimes]{Convex regions of stationary
spacetimes and Randers spaces.  Applications to lensing and asymptotic flatness}

\author[E. Caponio]{Erasmo Caponio}
\address{Dipartimento di Meccanica, Matematica e Managment\hfill\break\indent
Politecnico di Bari \hfill\break\indent Via Orabona 4,
70125, Bari, Italy}
\email{caponio@poliba.it}

\author[A.V. Germinario]{Anna Valeria Germinario}
\address{Dipartimento  di Matematica\hfill\break\indent
Universit\`a degli Studi di Bari\hfill\break\indent Via  Orabona 4,
70125 Bari Italy}
\email{germinar@dm.uniba.it}
\author[M. S\'anchez]{Miguel S\'anchez}
\address{Departamento de Geometr\'{\i}a y Topolog\'{\i}a \hfill\break\indent
 Facultad de Ciencias, Universidad de Granada \hfill\break\indent
 Campus Fuentenueva s/n, 18071 Granada, Spain}
\email{sanchezm@ugr.es}
\thanks{EC and AVG are partially supported by PRIN2009 ``Metodi variazionali ed applicazioni allo
studio di equazioni differenziali nonlineari'';   \\
\indent MS is partially supported by
Spanish MTM2010-18099 (MICINN) and P09-FQM-4496 (Junta de
Andaluc\'{\i}a) grants, both with FEDER funds. }
\thanks{This research is a result of the
activity developed within the Spanish-Italian Acci\'on Integrada HI2008.0106/Azione Integrata
Italia-Spagna IT09L719F1.}
\date{}

\begin{document}
\begin{abstract}
By using {\em stationary-to-Randers correspondence} (SRC, see
\cite{cys}), a characterization of  light and time-convexity  of
the boundary of a region of a standard stationary
$(n+1)$-spacetime  is obtained, in terms of the convexity of the
boundary of a domain in a  Finsler $n$ or $(n+1)$-space  of
Randers type. The latter  convexity  is   analyzed in depth
and,  as a consequence, the  causal simplicity and the existence
of  causal  geodesics confined in the region and connecting a
point to a stationary line are characterized.  Applications to
asymptotically flat spacetimes include  the light-convexity of
 hypersurfaces $S^{n-1}(r)\times\R$, where $S^{n-1}(r)$ is a sphere of large radius in a spacelike section of 
an end,   as well as the characterization of  their 
time-convexity with natural physical interpretations. The {\em
lens effect}  of both light rays and freely falling massive
particles with a finite lifetime, (i.e. the multiplicity of such
connecting curves) is characterized in terms of the focalization
of the geodesics in the underlying  Randers manifolds. 

\end{abstract}
\subjclass[2000]{53C50, 53C60, 53C22, 58E10, 83C30}
\keywords{Stationary spacetime, Finsler manifold, Randers metric,
convex boundary, timelike and lightlike geodesics, gravitational
lensing, asymptotic flatness}

\maketitle

\newpage
\tableofcontents
\newpage

\section{Introduction}

Assume that a stellar object emitted  radiation (light rays or
massive particles, the latter possibly with a finite lifetime) in
the past, and we can assume that the spacetime is  stationary on a
region which includes that source and ourselves. When can we
ensure that we will receive such a radiation and, in this case,
when will it be focalized towards us, obtaining so multiple images
of the source? The key ingredients will be the existence of a {\em
light or time convex hypersurface} around the region, plus the
possibility of lensing effects due to either curvature or
topology.

{\em Convexity} is a basic property of subsets of Euclidean space
which admits several extensions to semi-Riemannian manifolds. For
a  domain $D$ of a complete Riemannian manifold, its convexity
will mean  geodesic connectedness by means of {\em minimizing}
geodesics in $D$. This is equivalent to the {\em local} and {\em
infinitesimal} convexity of its boundary $\partial D$, when
$\partial D$ is regular enough (see  complete details below). For
a Lorentzian manifold $(M,g)$, these notions for $\partial D$
admit a straightforward analog, even extensible to causality types
(time, space and lightlike  convexities). Nevertheless, the
notion of convexity for $D$ is not so clear, as just  geodesic
connectedness makes sense in general, but local {\em extremal}
properties appear only for geodesics of causal type.  However, for
(conformally) stationary spacetimes, the recent progress in its
causal structure \cite{cys} (which is related with elements of
Finsler Geometry, \cite{cym,cys}), plus the developments on
convexity in the Finslerian
 setting \cite{bcgs},  allow  to carry out a detailed explanation of
 convexity  in this Lorentzian setting, to be studied
 here. 

Recall that a spacetime is called {\em stationary} if it admits a
timelike Killing vector field $Y$. Locally, each stationary
spacetime $(L , g_L) $ admits  a {\em standard form} (namely,
$L=S\times \R$, $g_L=g_0+2\omega_0 \de t-\beta\de t^2$, for some
{\em lapse} $\beta$ and {\em shift} $\omega_0$, see below) and,
when this form can be obtained globally, the spacetime is called
{\em standard stationary}. This structure will be assumed here,
and it is not too restrictive, as the completeness of $Y$ plus the
property of being distinguishing for $L$ (a causality condition
less restrictive than strong causality) ensure it \cite{js}.
Standard stationarity becomes natural in the framework of
asymptotically flat spacetimes,  including black holes, as the
``no hair'' results postulate that these will stabilize in some
member of the  Kerr family. Moreover, as  our techniques will be
conformally invariant in some cases, the corresponding results
will be extensible  to spacetimes such as the classical FLRW
ones.

Even though connections with Finsler Geometry were pointed out
long time ago \cite{Li1, Li2},  results about causality of
standard stationary spacetimes obtained by using an accurate
relation with such a geometry, have been obtained only recently
(see \cite{cym, cys}).  This relation is based on the fact that
the projections on $S$ of lightlike geodesics in $(L , g_L) $ are
pregeodesics for a suitable Finsler metric on $S$ of Randers type,
called {\em Fermat metric} in \cite{cym}. Such a simple property
leads to a kaleidoscope of relations between the Causality of
standard stationary spacetimes and the geometry of Randers spaces,
or {\em stationary-to-Randers correspondence (SRC)}, carefully
developed in \cite{cys}.

In this paper, our aim is to use SRC in order to describe the
convexity of a stationary region $D\times \R$ of a standard
stationary spacetime $L=S\times \R$. The light-convexity and
time-convexity of $D\times \R$ are characterized in terms of the
convexity  w.r.t. the geodesics of a  Randers metric on,  resp., $D$ and the product
 $\R_u\times D$, where $\R_u\equiv \R$ (the subindex $u$ will
 be used throughout the paper to recall the natural coordinate on the factor
 $(\R , du^2)$, not to be confused with
the global time coordinate $t$ associated to other copy of $\R$).
Moreover, these elements are also characterized in terms of
the existence of a causal geodesic with fixed length $l\geq 0$ 
(which is intended not to be bigger than the mean lifetime of the
traveling particle)  connecting a point and an integral line
of the Killing field $Y$ and {\em minimizing the arrival time}  $t$. 
These results complement the classical Fermat principle
(\cite{Kov,Per}) which assures that, in the case that a time
minimizing connecting lightlike curve exists, then it must be a
lightlike geodesic 
--but it does not assure existence.  We emphasize that our
results are not merely sufficient conditions to ensure that the
connecting causal geodesic will exist. On the contrary, the Fermat
metric provides the geometric framework to fully characterize
their existence  in both cases, lightlike geodesics and timelike
geodesics with a prescribed length.

The paper is organized as follows. In Section \ref{s2}, after a
summary on the notion of convexity for a Finsler manifold
(including the recent progress in \cite{bcgs}), the convexity of
the domain $D$ for a Randers metric $R=\sqrt{h}+\omega$ is
characterized (Proposition \ref{peq}) and discussed (Examples
\ref{r1}, \ref{r2}). As a consequence, the convexity of large
balls in  asymptotically flat Randers spaces  is shown
(Proposition \ref{largesphere}).   For this result, only 
the decay of $d\omega$ (rather than $\omega$) becomes relevant
(formula \eqref{asymptflat}).

In Section \ref{s3}, the convexity of domains $D\times \R$  in a
standard stationary spacetime $L=S\times\R$  is characterized. For
light-convexity, the infinitesimal convexity of $\partial (D\times
\R)$ becomes equivalent to the infinitesimal convexity of
$\partial D$ with respect to the Fermat metric $F$ (Theorem
\ref{lighteq}). Then, SRC and the results in \cite{bcgs} yield
easily the equivalence with the local notion of light-convexity
(Corollary \ref{lightequiv}).
For time-convexity, a further insight is obtained by
using the fact that timelike geodesics can be obtained as
projections of  lightlike geodesics
 of a product manifold $\R_u\times L$ of one dimension more.  The equivalence between
the infinitesimal time-convexity of the boundary
$\partial(D\times\R)$ and the infinitesimal convexity of
$\R_u\times  \partial D$ for a suitable Randers metric $F_\beta$
is detailed (Theorem \ref{peq2}).  Moreover, the infinitesimal
convexity of the latter hypersurface is characterized
(Proposition~\ref{Rbetaconvexity2}).

These results are applied to asymptotically flat stationary
spacetimes in  Section \ref{appflat}. Concretely, a
notion of asymptotic flatness  (Definition~\ref{referee2def})
specially adapted to this setting, is introduced and discussed
along Subsection~\ref{KN}. In the
next subsection, the light-convexity of  the hypersurfaces $S^{n-1}(r)\times\R$ for $r$ 
sufficiently large 
is proven (Corollary \ref{NewCorollary1}), and the
hypotheses under which time-convexity  holds  (or is violated) are
provided (Corollary \ref{NewCorollary3}). Remarkably,
time-convexity will not hold for large  spheres $S^{n-1}(r)$ under general
physical assumptions (Proposition \ref{positivemass}),  in
contrast with the lightlike case. In Subsection \ref{kerrshell},
the paradigmatic case of Kerr spacetime is analyzed specifically.
The non time-convexity of  $S^{n-1}  (r) \times\R$, for all large enough $r$, 
is interpreted (Remark
\ref{fall}), and the  hypersurfaces  close to the stationary limit
 one are  also taken into account (Corollary \ref{cKerrlc}).

In the first subsection of Section \ref{s4},  a full
characterization of the problem of connecting a point $(p,t_p)$
and a stationary line $l_q=\{(q,t):t\in \R \}$ by means of a
(first-arriving)  future-pointing  lightlike geodesic  contained in a stationary
domain $D\times\R$  is obtained (Theorem \ref{appl1}). This is
characterized alternatively in terms of:  (a)
Geometric/Variational interpretations of  the boundary:
light-convexity of $\partial D\times \R$, (b)Finsler geometry:
convexity of $D$ with respect to the associated Fermat metric, and
(c) Causal structure: causal simplicity  of the domain $D\times
\R$.
 When $D$ is not contractible, infinitely many connecting
lightlike geodesics (with diverging arrival times) appear. 
This can be interpreted as a {\em topological  lens  effect}
(while the gravitational lensing   depends strictly on the
curvature of the Fermat metric).  As emphasized in Remark
\ref{rrr}, these conclusions and the usage of SRC here, complete
the circle of results and techniques in papers on boundaries such
as \cite{fgm, gm}, where variational methods are applied to the
study of Lorentzian geodesics\footnote{Recall that these
techniques were initiated in \cite{bfg} with the introduction of
time and light convexity in the  static case. This case becomes
quite simpler, as it is related to Riemannian instead of properly
Finslerian metrics, see \cite{bgs1}. The techniques also apply to
periodic trajectories and other Lorentzian variational problems on
convex domains (see the subtleties in \cite{bs} and references
therein).}. Finally, in Remark \ref{referee2rem2}, further
physical applicability of the results is pointed out.

In Subsection  \ref{s4.2},  previous results are extended
to timelike geodesics.  From the technical viewpoint, the
following difficulty is worth pointing out. Our main result
(Theorem \ref{appl2}) is proved by using and auxiliary product
spacetime $\R_u \times L$. Nevertheless, our hypotheses are posed
naturally on the original stationary domain $D$, rather than
on the auxiliary elements in the product spacetime. For the
connection between the hypotheses on these two spacetimes (see
Lemma \ref{lcompact}), a small improvement on the results of
convexity for Finslerian metrics is carried out (Remark
\ref{rapostilla}).  By using this method, Theorem \ref{appl2}
assures  the existence of a connecting  future-pointing  timelike geodesic with
{\em a priori} fixed Lorentzian length and minimizing the arrival
time $t$ at the stationary curve $l_q$. Removing the minimizing
property, when $D$ is not contractible one obtains also the
multiplicity of such connecting timelike geodesics. That is, any
freely falling massive particle, starting at some event $p$, will
be able to reach $l_q$ and, if $D$ is not contractible, 
arriving after unbounded values of time $t$, {\em even if the
lifetime of the particle is arbitrarily small.} 

Due to the technical subtleties of our approach, in Subsection
\ref{s4.3} a revision of the available causal, topological and
variational tools for this kind of problems, is carried out. We
stress how stationary-to-Randers Correspondence fits with the
other techniques to provide a complete solution of causal geodesic
connectedness in the stationary setting, and point out further
related  problems.

Finally, in the last section the conclusions are summarized.

\section{Convexity of domains of Randers manifolds} \label{s2}
\subsection{Finsler metrics}
Let us recall some notions about Finsler manifolds. A Finsler
structure  on a smooth finite dimensional (connected) manifold $M$
is a function $F\colon TM\to[0,+\infty)$ which is continuous on
$TM$, smooth on $TM\setminus 0$, vanishing only on the zero
section, fiberwise positively homogeneous of degree one (i.e.
 $F(\lambda y)=\lambda F(y)$, for all $y\in TM$
and $\lambda>0$), and which has fiberwise strongly convex
square, that is, the matrix
\beq\label{fundmatr}
 g_{y} =\left[\frac{1}{2}\frac{\partial^2 (F^2)}{\partial
y^i\partial y^j} (y) \right]\eeq
is   positive definite   for any
$ y \in TM\setminus 0$.  Observe that $y\in TM\setminus 0\mapsto g_y$ is a symmetric section of the tensor product of the pulled back   cotangent bundle $\pi^*T^*\!M$  over $TM\setminus 0$ with itself. Henceforth, besides to the quite standard notation $A(y)$,   we will  also use -- to get more compact formulas -- an index that indicates the dependence on $y\in TM\setminus 0$ for  sections $A$ of $\pi^*TM$,  its dual $\pi^*T^*\!M$ or their tensor product (for example,   $g_y$ above or $(H_\phi)_y$   for the Hessian, with respect to the Chern connection, of a function $\phi$ on $M$).

The minimal requirement about the
regularity of $F$ that we need is that the fundamental tensor $g$
is
$C^{1,1}_{\rm loc}$ in $TM\setminus 0$.

By homogeneity, $F(y)=g_y(y,y)$, for all $y\in TM$, thus
the fundamental tensor $g$  gives the shape of the unit sphere (indicatrix),   $F(y)=1$, $y\in T_x
M$,   at each point    $x\in M$.
Hence,   its positive definiteness yields the  strict  convexity of the closed ball $\bar
B_x=\{y\in T_x M:F(y)\leq 1\}$,  that is   any line segment joining two points
contained in $\bar B_x$ is contained in $B_x$, except, at most, its endpoints    (see,
for example, \cite[Exercise 2.1.6]{bcs}).

The length of a piecewise smooth  curve
$\gamma\colon [a,b]\to M$ with respect to the Finsler  metric 
$F$  is defined by
\[\ell_F(\gamma)=\int_a^b\!\!
F (\dot\gamma) \; \de s\]
hence the Finsler
distance between two arbitrary points $p, q\in M$ is
given by
\[
d (p,q)= \inf_{\gamma\in {\mathcal P}(p,q; M)}{\ell}_F(\gamma),
\] where ${\mathcal P}(p,q;M)$ is the set of all piecewise smooth
curves $\gamma\colon[a,b]\to M$ with $\gamma(a)=p$ and
$\gamma(b)=q$.  The distance function is non-negative and
satisfies the triangle inequality, but  in general it is not
symmetric  since $F$ is only {\em positively} homogeneous in $y$,
that is, $d$ is a {\em generalized distance} (see \cite{FHS} for
an exhaustive study). As a consequence, the {\em reverse} Finsler
metric of $F$ is defined as $\tilde F (y) =F (-y) $. So, for any
point $p\in M$ and for all $r>0$, we can define two different
balls centered at $p$ and having radius $r$: the {\em forward
ball} $B^+(p,r)=\{q\in M \mid d (p,q)<r\}$ and the {\em backward}
one $B^-(p,r)=\{q\in M \mid d (q,p)<r\}$. Analogously, it makes
sense to introduce two different types of Cauchy sequences and
completeness: a sequence $(x_n)_n\subset M$ is a {\em forward}
(resp. {\em backward}) {\em Cauchy sequence} if for all $\eps>0$
there exists an index $\nu\in\N$ such that for all $m\geq n\geq
\nu$, it is $d(x_n,x_m)<\eps$ (resp. $d (x_m,x_n)<\eps$);
consistently a Finsler manifold is {\em forward complete} (resp.
{\em backward complete}) if every forward (resp. backward) Cauchy
sequence converges\footnote{It is worth pointing out that a second
natural notion of forward and backward Cauchy sequence can be
given, see \cite[Section 3.2.2]{FHS}. This notion is {\em not}
equivalent to  that  stated above, but it yields equivalent
(forward and backward) Cauchy completions and, thus, equivalent
notions of completeness.}. In general, the backward elements for
$F$ are forward for $\tilde F$, and we will refer just to forward
elements. It is well known that the topology generated by the
forward balls coincides with the underlying manifold topology.
Moreover, an adapted version of the Hopf-Rinow theorem  holds (cf.
\cite[Theorem 6.6.1]{bcs}) stating, in particular, the equivalence
between forward completeness and compactness of closed and forward
bounded subsets (i.e., those included in some forward ball) of
$M$.
\subsection{Convexity}
We say that a Finsler manifold $(M,F)$ is {\em (geodesically)
convex} if each pair of points $(p,q)\in M\times M$ can be
connected by a (non-necessarily unique) minimizing geodesic, i.e. a
geodesic with length $d(p,q)$, starting at $p$ and ending at $q$.
Recall that convexity for $F$ is equivalent to convexity for
$\tilde F$. Any of the assumptions of  the  Hopf-Rinow theorem (namely,
either forward or backward completeness) imply convexity. However,
after \cite{cys}, it becomes clear that the forward or the
backward completeness of the generalized metric $d$ can be
substituted in several classical results (as convexity,
Bonnet-Myers or Synge theorems) by the assumption of the
compactness of the closed balls with respect to the symmetrized
distance $d_s$  associated to $d$, namely
\[
d_s(p,q)=\frac 12 \left(d(p,q) + d(q,p)\right), \quad \quad
\forall p,q\in M.
\]
More precisely, let $B_s$ denote the balls with respect to $d_s$.
If  the closed balls $\bar B_s(x,r)$ are compact for all $x\in M$
and $r>0$ (or equivalently the subsets  $\bar B^+(x,r_1)\cap \bar
B^-(y,r_2)$ are compact for any $x,y\in M, r_1, r_2>0$), then
$(M,F)$ is convex, \cite[Theorem 5.2]{cys}. It is worth to stress
that the Hopf-Rinow theorem does not hold in general for the
metric $d_s$. For instance, Example  2.3 in \cite{cys} exhibits a
non compact, $d_s$-bounded Randers space whose   symmetrized
distance $d_s$ is complete.

From a variational viewpoint, geodesics parametrized with constant
speed (i.e $s\mapsto F(\gamma(s),\dot\gamma(s))={\rm const.}$) and
connecting two fixed points $p$ and $q$ on  $(M,F)$, are the
critical points of the energy functional
\[J(\gamma)=\frac 12\int_a^b F^2 (\dot \gamma) \;
\de s \]
defined on the manifold of  the  $H^1$ curves $\gamma$ on $M$, parametrized on the
interval $[a,b]\subset\R$ and such that $\gamma(a)=p$, $\gamma(b)=q$
(see, for example, \cite[Proposition 2.1]{cym}).

The convexity of a domain  (i.e., an  open connected subset)
$D\subset M$, regarded as a Finsler manifold in its own right, can
be related to the {\em infinitesimal convexity} of its boundary
$\partial D$, at least when the closure $\bar D$ is  a manifold
with boundary and $\partial D$ is   at least twice   continuously differentiable,
i.e. when $\partial D$ is (locally and then globally) the inverse
image of a regular value of some $C^r$ function  with $r\geq 2$.
Infinitesimal
convexity means that for each $x\in\partial D$ there exists a
neighborhood $U\subset M$ of $x$ such that  for one (and then
for all) $C^2$ function $\phi: U \rightarrow\R$ such that
\be\label{defb1}
\begin{cases}
          \phi^{-1}(0) =  U\cap\partial D \\
          \phi > 0&\mbox{on $U\cap D$ }\\
          {\rm d}\phi(x)\not= 0&\text{for every $x\in U\cap \partial D$}\end{cases}
\ee one has \be\label{defb2} (H_\phi)_y(y,y)\leq 0  \quad
\text{for every $y\in T_x\partial D\setminus\{0\}$,} \ee  where
$H_\phi$ is the Hessian of $\phi$ with respect to the Chern
connection $\nabla$ of $(M,F)$, i.e. $H_\phi=\nabla(\de \phi)$   (see \cite[Section 2.4]{bcs}).
More precisely, in natural coordinates on
$TM\setminus\{0\}$,  $ (H_\phi)_y(u,v)$  is given by
(the Einstein summation convention is used in the remainder)
\[\big((H_\phi)_y\big)_{ij}u^iv^j=\frac{\partial^2\phi}{\partial x^i\partial
x^j} u^iv^j-\frac{\partial \phi}{\partial x^k} \Gamma^k_{\,\,ij}(y)u^iv^j,\]
where $\Gamma^k_{\,\,ij}   = \Gamma^k_{\,\,ij}(y)   $ are the components of the Chern
connection given by
\[ \Gamma^k_{\,\,ij}= \frac{g^{ks}}{2} \left( \frac{    \delta   g_{si}}{  \delta   x^j}
-  \frac{   \delta   g_{ij}}{   \delta   x^s}
+ \frac{   \delta  g_{js}}{  \delta   x^i}
\right). \]
 Here,   $g_{ks}=g_{ks}(y)$ and $g^{ks}=g^{ks}(y)$ are, respectively,  the components of $g_y$  (see
\eqref{fundmatr}) and of its inverse at $y\in TM\setminus 0$, while
$\frac{ \delta }{\delta x^i}$ are vector fields on $TM$ defined  as
\[\frac{ \delta }{\delta x^i}=\frac{\partial}{\partial x^i}-N^j_i(y)\frac{\partial}{\partial y^j},\]
where $N^j_i=N^j_i(y)$ are the components of the so-called {\em non-linear connection on $TM\setminus 0$} (see \cite[\S 2.3]{bcs}).
As the equation of a geodesic $\gamma=\gamma(s)$, parametrized with constant speed,  i.e. $F(\dot\gamma)=\mathrm{const.}$,  is given by
\[
\frac{\de ^2 \gamma^i}{\de s^2}+\Gamma^i_{\,\,jk}(\dot \gamma)\dot\gamma^j\dot\gamma^k=0,\]
it is immediate  to see that
\beq(\phi\circ\gamma)''(s)=(H_\phi)_{\dot\gamma(s)}(\dot\gamma(s),\dot\gamma(s))\label{evH}.\eeq
\bere\label{manyconvex} Eq. \eqref{evH}  holds also for a
Riemannian or a Lorentzian metric $g$, namely if $\gamma=\gam(s)$
is a geodesic of $g$ then
$(\phi\circ\gamma)''(s)=H_\phi(\dot\gamma,\dot\gamma)$, where, in
this case,  the Hessian of $\phi$ is $H_\phi=\nabla(\de \phi)$ and
$\nabla$ is the Levi-Civita connection of $g$. \ere

\bere\label{local convexity}  This notion of infinitesimal
convexity  for a hypersurface is a natural extension  to the
Finslerian setting of the analogous  one in a Riemannian manifold. 
Let us summarize
the relation between this notion  and the ones  of {\em local} and {\em strong} convexity for $\partial D$. 
Recall that, on one hand,  an embedded  hypersurface $N$ is {\em locally convex} when
for each $x\in N$ a small enough neighborhood $U$ of $x$ exists
such that all the geodesics in $U$ issuing from $x$ and tangent to
$N$ lie in the closure of one
 of the two connected parts of $U\backslash N$, called the {\em local
exterior}.  Recall that, as the hypersurface $\partial D$  in
\eqref{defb1} is the boundary of a domain,      all the locally
defined functions $\phi$ can be taken so that they match in a
global one, and the global exterior (namely, $M\backslash D$) is
well defined. On the other hand, when \eqref{defb2} is satisfied
with the strict inequality, we will say that $\partial D$ is
{\em   strongly convex} at $x\in\partial D$.

 Trivially, strong convexity at a point implies both, local and infinitesimal convexity on a neighborhood of that point, and it is also clear that the local convexity at a point implies the infinitesimal one at the same point (cf. e.g.
\cite[Prop. 14.2.1 and Th. 14.2.3]{sh2}). The non-triviality of the last converse when the inequality \eqref{defb2} is not strict   (for that, one
needs also to assume that the inequality holds  on a neighborhood
of the point, otherwise the implication is not true),  was
stressed by Bishop \cite{b} (see also  the
review \cite{San}), who proved this equivalence in the Riemannian
case
 for a $C^4$ metric. The proof of the equivalence in
the general Finsler case   was obtained recently in
\cite[Corollary 1.2]{bcgs},  where,  the degree of differentiability
 was also lowered to $C^{1,1}_{\mathrm{loc}}$ (i.e. $C^1$ on
$TM\setminus\{0\}$ with locally Lipschitz differential) for the
fundamental tensor $g$ and $C^{2,1}_{\mathrm{loc}}$ for the
function $\phi$.
 \bere\label{c2}
We point out that  a refinement of the proof in \cite{bcgs} on  the
equivalence between infinitesimal convexity and local one allows to  optimize  the degree of differentiability
of the hypersurface $\partial D$ to $C^2$ (see \cite{caponiogelogra}).
\ere

Finally, recall also that when the subsets $\bar B_s(x,r)\cap \bar D$ are
compact for all $x\in D, r>0$, the above equivalent notions of
convexity for $\partial D$ are also equivalent to the convexity of
$D$ (see, \cite[Theorem 1.3]{bcgs}). \ere

\subsection{Randers spaces} In this paper, we deal with the convexity of a domain in  a
Randers space. Given a Riemannian manifold $(S, h)$ and a one-form
$\omega$ on $S$ such that, for any $x\in S$, $\| \omega \|_x < 1$,
where $ \| \omega \|_x = \sup_{ y \in T_x S \setminus \{0\}} |
\omega (y)  |  /  \sqrt{h(y,y)}$, a Randers metric $R$ and
its reversed one $\tilde R$ on $S$ are defined by setting
\be
\label{ran} R(y) = \sqrt{h(y,y)} + \omega(y)\quad\text{and}\quad \tilde R(y) = \sqrt{h(y,y)} -
\omega (y), \quad  y\in TS
\ee
Condition $\| \omega \|_x < 1$ is necessary and
sufficient for $R$ and $\tilde R$ to be positive and it implies
that they have fiberwise strongly convex square   (see \cite[\S
11.1]{bcs} for details).

The condition  \eqref{defb2}  at a point of $\partial D$ in
a Randers space $(S,R)$ can be  written in terms of the Hessian
$H^h$ of $\phi$ with respect to the Levi-Civita connection of the
Riemannian metric $h$ plus another term involving $ \de \omega$.

In what follows, $\nabla^h$ will    denote the gradient  symbol  with respect to the metric $h$
as well as  and the Levi--Civita connection of $h$  and
  $\widehat{\de \omega }$ the
$(1,1)$--tensor field $h$-metrically associated to $\de \omega$, i.e. for every $(x,y) \in TM$,
$\de \omega_x(\cdot, y) = h_x\big (\cdot, \widehat{ \de \omega}(y)\big)$.
\begin{prop}\label{peq}
Let $D$ be a domain of class $C^{2}$ of  a Randers  manifold
 $(S,R)$,   $x\in \partial D$,  $\phi$ be  a  function defined on a neighborhood of $x$ satisfying conditions \eqref{defb1}.
Then, the following propositions are equivalent:
\begin{itemize}
\item[(i)] $(\partial D ; R)$ is  infinitesimally convex at $x$;
\item[(ii)] $(\partial D ; \tilde R)$ is  infinitesimally convex at $x$;
\item[(iii)] for all $y \in T_x \partial D$
\be  \label{hes}
H_\phi^h (y,y) + \sqrt{h(y,y)}\,   \de \omega (y,  \nabla^h \phi) \leq 0;
\ee
\item[(iv)] for all $y \in T_x \partial D$
\beq\label{hes2}
H_\phi^h (y,y) - \sqrt{h(y,y)}\, \de \omega (y,  \nabla^h \phi )
\leq 0;
\eeq
\item[(v)]for all $y \in T_x \partial D$
\beq\label{hes3}
H_\phi^h (y,y) + \sqrt{h(y,y)}\,\big| \de \omega (y,  \nabla^h \phi)\big|
\leq 0.\eeq
\end{itemize}
\end{prop}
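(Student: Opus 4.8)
The plan is to reduce the whole statement to one identity that rewrites the Chern--connection Hessian $(H_\phi)_y$ occurring in \eqref{defb2} in terms of the Levi--Civita Hessian $H_\phi^h$ of $h$ plus a single term linear in $\de\omega$, and then to exploit the parity of that term under $y\mapsto-y$. By \eqref{evH}, infinitesimal convexity of $(\partial D;R)$ at $x$ means $(\phi\circ\gamma)''(0)\le0$ for every $R$-geodesic $\gamma$ with $\gamma(0)=x$ and $\dot\gamma(0)=y\in T_x\partial D$. Since $\ell_F(\gamma)=\ell_h(\gamma)+\int_\gamma\omega$ is reparametrization invariant, the $R$-pregeodesics coincide with the critical curves of $\ell_h+\int_\gamma\omega$; parametrizing these by $h$-arclength, the first variation of $\int_\gamma\omega$ equals $\int\de\omega(V,\dot\gamma)\,\de s$, so a standard computation yields the magnetic geodesic equation $\nabla^h_{\dot\gamma}\dot\gamma=\widehat{\de\omega}(\dot\gamma)$. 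The decisive observation is that for $y\in T_x\partial D$ one has $(\phi\circ\gamma)'(0)=\de\phi(y)=0$, whence reparametrization only multiplies $(\phi\circ\gamma)''(0)$ by a positive factor and preserves its sign; I may therefore evaluate it on the $h$-unit-speed representative.

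Carrying out that evaluation gives, for $h(y,y)=1$,
\[
(\phi\circ\gamma)''(0)=H_\phi^h(y,y)+\de\phi\big(\widehat{\de\omega}(y)\big)=H_\phi^h(y,y)-\de\omega(y,\nabla^h\phi),
\]
where I used $\de\phi(\widehat{\de\omega}(y))=h(\nabla^h\phi,\widehat{\de\omega}(y))=\de\omega(\nabla^h\phi,y)$. By degree-two homogeneity in $y$ this extends to all $y$, so infinitesimal convexity of $(\partial D;R)$ at $x$ is equivalent to $H_\phi^h(y,y)-\sqrt{h(y,y)}\,\de\omega(y,\nabla^h\phi)\le0$ for every $y\in T_x\partial D$; that is, (i)$\Leftrightarrow$(iv).

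Replacing $\omega$ by $-\omega$ turns $R$ into $\tilde R$ and $\de\omega$ into $-\de\omega$, so the identical argument gives (ii)$\Leftrightarrow$(iii). The equivalence (iii)$\Leftrightarrow$(iv) is then pure parity: $H_\phi^h(y,y)$ is even and $\sqrt{h(y,y)}\,\de\omega(y,\nabla^h\phi)$ is odd in $y$, while $T_x\partial D$ is a linear subspace invariant under $y\mapsto-y$, so the family of inequalities \eqref{hes} (for all $y$) is carried onto the family \eqref{hes2} (for all $y$). Finally $\sqrt{h(y,y)}\,|\de\omega(y,\nabla^h\phi)|$ is the larger of $\pm\sqrt{h(y,y)}\,\de\omega(y,\nabla^h\phi)$, so \eqref{hes3} for all $y$ is equivalent to the conjunction of \eqref{hes} and \eqref{hes2}; hence (v) is equivalent to (iii) and (iv). This closes the cycle (i)$\Leftrightarrow$(ii)$\Leftrightarrow$(iii)$\Leftrightarrow$(iv)$\Leftrightarrow$(v).

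The single place where genuine computation is needed, and the main obstacle, is the displayed identity linking the Chern--connection Hessian of $R$ to the Levi--Civita Hessian of $h$. I would obtain it through the magnetic-geodesic route above; alternatively, one can proceed tensorially, noting from the Chern--connection expression for $(H_\phi)_y$ that
\[
(H_\phi)_y(y,y)=H_\phi^h(y,y)-2\,\tfrac{\partial\phi}{\partial x^k}\big(G^k-\hat G^k\big),
\]
where $G^k$, $\hat G^k$ are the spray coefficients of $R$ and of $h$. The Randers formula (see \cite{bcs}) shows that $G^k-\hat G^k$ is the sum of a scalar multiple of $y^k$ and a term $\sqrt{h(y,y)}\,s^k{}_0$ built from the antisymmetric part of $\nabla^h\omega$, i.e. from $\de\omega$; the $y^k$-part is annihilated by $\tfrac{\partial\phi}{\partial x^k}y^k=\de\phi(y)=0$ (tangency of $y$ to $\partial D$), leaving precisely the $\de\omega$-contribution. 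This also explains transparently why neither $\omega$ nor the symmetric part of its covariant derivative can survive in \eqref{hes}--\eqref{hes3}. The only remaining point to watch is that all steps are licit under the stated regularity ($\partial D$ and $\phi$ of class $C^2$, $\omega$ of class $C^1$), consistent with the $C^{1,1}_{\mathrm{loc}}$ hypothesis on the fundamental tensor.
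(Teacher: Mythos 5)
Your proof is correct and follows essentially the same route as the paper: both reduce the Chern--Hessian inequality \eqref{defb2} to the ``$H^h_\phi$ plus $\de\omega$-term'' identity by evaluating $(\phi\circ\gamma)''$ along Randers pregeodesics with tangent initial velocity (the paper uses the constant-Randers-speed pregeodesic equation \eqref{georandersh} and kills the tangential term by $h(\nabla^h\phi,y)=0$, you use the $h$-arclength/magnetic form plus a reparametrization argument justified by the same tangency), and then both obtain the remaining equivalences from the parity $y\mapsto -y$ and the substitution $\omega\mapsto-\omega$. The only discrepancy is that you land on (i)$\Leftrightarrow$(iv) where the paper asserts (i)$\Leftrightarrow$(iii); with the paper's stated convention $\de\omega(\cdot,y)=h\big(\cdot,\widehat{\de\omega}(y)\big)$ your sign is in fact the consistent one, and in any case the difference is immaterial since (iii) and (iv) are equivalent over the linear subspace $T_x\partial D$.
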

\begin{proof} It is enough to show that inequality \eqref{hes} is equivalent to
the inequality  \eqref{defb2}. Indeed the other equivalences will
follow simply by the definition \eqref{ran} of $R$ and $\tilde R$
 and observing  that \eqref{hes2} is the evaluation of
\eqref{hes} in $-y$. If $\gamma$ is any smooth curve on $S$, we
can compute the  second derivative of $\phi\circ\gamma$ by using
the Levi-Civita connection of $h$. As
$(\phi\circ\gamma)'=h(\nabla^h\phi, \dot\gamma)$  and  $H_\phi^h
\big(\dot \gamma (s) , \dot \gamma
(s)\big)=h\big(\nabla^h_{\dot\gamma(s)}\big(\nabla^h\phi(\gamma(s))\big),\dot\gamma(s)\big)$
(see e.g.  \cite[Ch. 3, Lemma 49]{onei}), we obtain
 \bal
(\phi\circ\gamma)''(s)&=
h\big(\nabla^h_{\dot\gamma(s)}\big(\nabla^h\phi(\gamma(s))\big),\dot\gamma(s)\big)+
 h \big(\nabla^h
\phi (\gamma (s) ), \nabla^h_{\dot \gamma (s) } \dot \gamma
(s)\big)\nonumber\\
&=H_\phi^h
\big(\dot \gamma (s) , \dot \gamma (s)\big) +
 h \big(\nabla^h
\phi (\gamma (s) ), \nabla^h_{\dot \gamma (s) } \dot \gamma
(s)\big).\label{ddotrho} \eal
A  geodesic
$\gamma=\gamma(s)$ of $(S,R)$ (parametrized with constant Randers
speed), satisfies in particular  the pregeodesic equation
(see
for example the computations   in  \cite{cym1} above its Eq.\  (6)):
\beq
\nabla^h_{\dot \gamma} \dot \gamma    =
\sqrt{h(\dot \gam, \dot \gam )} \,  \widehat{ \de \omega}( \dot \gam )
     +\frac{1}{2}\frac{\de}{\de s}
\left(\log(h(\dot\gam,\dot\gam))\right)\dot\gam.
\label{georandersh}
\eeq
Now, for any
$x \in \partial D$ and  $y \in T_x
\partial D\setminus \{0\}$ consider the geodesic
$\gamma$ such that $\gamma (0) =x$ and $\dot \gamma (0) = y$. As
$h_x (\nabla^h \phi (x ), y ) =0$, substituting
\eqref{georandersh} in \eqref{ddotrho} and recalling \eqref{evH},
we obtain
\[
(H_\phi)_y(y,y)=H_\phi^h (y,y) + \sqrt{h(y,y)}  h \big( \nabla^h \phi ,
 \widehat{ \de \omega }( y ) \big),
\]
i.e., the expression in the left-hand side of \eqref{hes}.
\end{proof}
Clearly, Proposition~\ref{peq}  can be extended to
 strong convexity.

By \eqref{hes}, the Hessians of $\phi$ for $R$ and $h$ will agree
on the vectors tangent to $\partial D$  if and only if  $\de
\omega (\nabla^h\phi, \cdot)$  vanishes there. However,  from \eqref{hes3} the
following holds:

\bco If $(\partial D; R)$ is infinitesimally convex
then also $(\partial D;h)$ is infinitesimally convex. \eco

The following example shows that
the converse is not true.
\begin{example} \label{r1}
Let $S= \R^2$ be endowed with a Randers metric as in
\eqref{ran}, being $h$ the usual Euclidean metric and $\omega$
defined as $\omega_x(y) = f (x^2 ) y^1 $ for any $x = (x^1 , x^2
), y = (y^1 , y^2 ) \in \R^2$ , where $f : \R \rightarrow \R$
is a smooth function such that $| f | < 1$. For any $\ro
>0$, take as a domain the open ball
$$D_{\ro} = \{ (x^1 , x^2 ) \in \R^2 \mid  (x^1)^2 + (x^2)^2 <  \ro^2  \}$$
 whose boundary is defined by  $\phi (x^1 , x^2 ) =  \ro^2  - (x^1)^2 -
(x^2)^2$. Obviously, $\nabla^h \phi (x) = (-2x^1 , -2 x^2)$,
$H^h_\phi(y,y) = -2 (y^1)^2 -2 (y^2)^2$, and $\partial D_{\ro}$ is
convex with respect to $h$ for any $\ro>0$. Nevertheless, it is
easy to find cases  where  $(\partial D_{\ro} ;R)$ is not convex. For
example, it is enough to assume $\ro |f'(\ro)|>1$ (in addition to
$|f|<1$). Indeed, $\de \omega (y,z) = f' (x^2) y^1 z^2 - f' (x^2)
y^2 z^1$, and the convexity condition \eqref{hes3}   can be
written as
\begin{equation}\label{eexconv} - (y^1)^2 - (y^2)^2 +\sqrt{(y^1)^2 + (y^2)^2} |
f' (x^2) |  | x^2 y^1 - x^1 y^2 |
  \leq  0 \end{equation}
for any $x \in \partial D_{\ro}$, $y \in T_x \partial D_{\ro}$.
But \eqref{eexconv} is not fulfilled for $x=(0,\ro)$, $y=(1,0)$.

\end{example}
Next, the previous example is modified in order to show that the
convexity of $(\partial D; R)$ does not imply the convexity of
$\partial D$ with respect to the Riemannian metric $h_0=
h-\omega^2$. This question becomes  natural because, on one hand,
$\sqrt{h_0+\omega^2} + \omega$ defines always a Randers metric
(with no restriction on $\omega$), and, on the other, such an
$h_0$ becomes the Riemannian metric on the slices of a standard
stationary spacetime (see Remark~\ref{reverse}(2) below).
\begin{example} \label{r2}
Redefine, in Example \ref{r1}, the 1-form as $\omega_x (y) = f (x^1
) y^1 $.  Since $\de \omega=0$, $(\partial D_{\ro}; R)$ is 
convex from Proposition \ref{peq}. Let  $h_0=h-\omega^2$,
i.e.,
$$h_{0_{(x^1,x^2)}} \big( (y^1 , y^2) , (y^1 , y^2)\big) = (1- f(x^1 )^2)(y^1)^2 + (y^2)^2.$$
To check that $(\partial D_{\ro}; h_0)$  is not convex for simple
choices of $f$, recall that  a curve $\gamma (s) = ( x^1 (s) , x^2
(s) )$, $ s \in I$, is a geodesic for $h_0$  iff
\[
\begin{cases}
\ddot x^1 = \dfrac{f (x^1)f' (x^1)}{1 - f(x^1)^2}(\dot x^1)^2   \\
\ddot x^2 = 0.
\end{cases}\]
So the $h_0$-Hessian of  $\phi$
 is \beq\label{hg} H^{h_0}_\phi  \big( (y^1 , y^2) , (y^1
, y^2)\big) = -2 \left( 1+ x^1 \frac{f (x^1)f' (x^1)}{1 -
f(x^1)^2} \right) (y^1)^2 - 2 (y^2)^2. \eeq Notice that, as $(y^1
, y^2)$ is assumed to be tangent to $\partial D_{\ro}$ at $(x^1 ,
x^2)$, necessarily $y^2=-x^1 y^1/ x^2$ whenever $x^2\neq 0$. So,
the right part of \eqref{hg} reads:
$$
-2 \left( 1+ x^1 \frac{f (x^1)f' (x^1)}{1 - f(x^1)^2} +
\frac{(x^1)^2}{\ro^2-(x^1)^2} \right) (y^1)^2 .
$$
Thus, for each $x^1\neq \pm \ro, 0$ and any choice of $f(x^1)$, we
can choose $f'(x^1)\neq 0$ so that \eqref{hg} becomes positive for
$y^1\neq 0$.
\end{example}
\subsection{Convexity in asymptotically flat
Randers manifolds}\label{asflat} The notion of asymptotic flatness
is specially relevant for Riemannian manifolds, and  it  allows to
ensure that large balls are convex. Next we explore the analogous
issues for a Randers manifold.

Consider a Riemannian manifold $(S, h)$ endowed with a one-form
$\omega$, with $\|\omega\|_x<1$, for each $x\in S$. Assume that
there is a compact set $K\subset S$ such that $S \setminus K$ is a
disjoint union of ends, $E^{(k)},k=1,\dots , m$, such that each
end is diffeomorphic to  $\R^n \setminus \{0\}$  and in each end
there exist  an {\em (asymptotic)}  coordinate chart  $x=(x^1,\ldots,x^n)$
 and positive constants  $p$ and $q$ such that $h$ and $\omega$
satisfy \bal
&h_{ij} = \delta_{ij} + O(1/|x|^p),\nonumber\\
&\partial_k h_{ij} =O(1/|x|^{p+1}),\label{asymptflat}\\
&\Omega_{ij}:=\partial_i
\omega_j-\partial_j\omega_i=O(1/|x|^{q+1}), \nonumber \eal
as $|x|\to +\infty $,  where $|\cdot |$ denotes the natural
norm in each coordinate chart, i.e., $|x|^2=\sum_{i=1}^n (x^i)^2$.
Observe that the above growth assumptions on $h$, plus bounds on
  its  second derivatives  (including the  scalar curvature),   are commonly used to define {\em asymptotic
flatness} in   a purely Riemannian setting  (see e.g.
\cite{s} or \cite{bl}). We will not require bounds   neither for the second
derivatives of $h$ nor for its scalar curvature,   as convexity involves only
pregeodesics (i.e. the connection rather than the curvature,   see also Remark~\ref{NewRemark1}).
Consistently, we require only bounds for the  differential of
$\omega$ and not for $\omega$ itself,  because   if an exact form is added
to $\omega$ the pregeodesics remains   unchanged.  For these reasons,  when \eqref{asymptflat}   are
fulfilled on each end we say that the  Randers space is  {\em
geodesically asymptotically flat} (see also Remark
\ref{referee2rem_moved} below).

  Let us focus now  on large spheres
$S^{n-1}(r_0)$, defined in the asymptotic coordinates of each  end $E^{(k)}$, as $|x|^2=r_0^2$,  $r_0  >0$    large enough.
Let
$\phi_\roo^{(k)}(x)=\roo^2-|x|^2$ and
\be \label{edk}
D_\roo^{(k)}=\{x\in E^{(k)}\ |\ \phi^{(k)}_\roo
(x)>0\}.
\ee
  Let $D_{\roo}$  be the domain of $S$ equal to
$\bigcup_{k} D^{(k)}_\roo \cup K$ and $\phi(\equiv \phi_\roo)$   be an
extension of all the $\phi^{(k)}_\roo$'s to $S$. Let us see that
the boundary of $D_{\roo}$ is convex if $\roo$ is big enough. Let
$\gamma=\gamma(s)$ be a geodesic of the Randers metric defined by
$h$ and $\omega$. If $\gamma$ is parametrized with constant
Randers speed $\sqrt
{h(\dot\gamma,\dot\gamma)}+\omega(\dot\gamma)=\mathrm{const.}$
then, in local coordinates, $\gamma$ satisfies the following
equation (compare with \eqref{georandersh}) \beq\label{georand}
\ddot\gamma^l=-\Gamma^l_{ij}
\dot\gam^i\dot\gam^j+\sqrt{h(\dot\gam,\dot\gam)}h^{lm}\big(\partial_m\omega_j
-\partial_j\omega_m\big)\dot\gam^j+ \frac{1}{2}\frac{\de }{\de s}
\left(\log(h(\dot\gam,\dot\gam))\right)\dot\gam^l \, ,  \eeq where
$\Gamma^l_{ij}$ are the components of Levi-Civita connection of
$h$ and $h^{kl}$ is the inverse of $h_{kl}$. Let $x\in\partial
D^{(k)}_{\roo}$ and $y\in T_x\partial D^{(k)}_{\roo}$, arguing as
in the proof of Proposition~\ref{peq} and using \eqref{georand} we
get,
 \be\label{ehessasymp} (H_{\phi})_y(y,y) =-2|y|^2+2x^k h_{kl}
\left(\Gamma^l_{ij}y^iy^j-\sqrt{h(y,y)}h^{lm}\big(\partial_m\omega_j-\partial_j\omega_m\big)y^j\right).
\ee  As $h_{ij}=\delta_{ij}+O(1/|x|^p)$, its inverse $h^{ij}$ is
of the type
 \beq\label{inverse}
h^{ij}=\delta^{ij}+O(1/|x|^p) \eeq and then, recalling the second
condition in \eqref{asymptflat}, $\Gamma^l_{ij}= O(1/|x|^{p+1})$; thus
from \eqref{ehessasymp} we get \be \label{ehessasympInequality}
(H_{\phi})_y(y,y)\leq-2|y|^2+2C \left(\frac{1}{|x|^{p+1}}+
\,\frac{1}{|x|^{q+1}}\right)\,  |x|\, |y|^2 \ee
which
is negative, if $|x|=\roo$ is large enough.  Summing up,  we get
the following

\bpr\label{largesphere}
 In any  geodesically
asymptotically flat Randers manifold (in the sense specified in
formula \eqref{asymptflat})   $\partial D_{r_0}$  is  strongly
convex,  for any sufficiently large enough $r_0$.
 \epr

\section{Stationary spacetimes and causally convex boundaries}\label{s3}
\subsection{Background and SRC} Randers spaces  are deeply related to
the causal structure of stationary Lorentzian manifolds. We start
recalling some basic definitions and notations   (see \cite{beeh,
HE, ms, onei} for further information).

A {\em Lorentzian manifold} is a pair $(L, g_L)$ where $L$ is a
smooth (connected) manifold and $g_L$ a metric on $L$ of index
one, with signature $( + \, , \,\cdots ,+, -)$. A
non-zero tangent vector $v \in
T_z L$, $z \in L$, is said {\em timelike} (respectively {\em
lightlike}; {\em spacelike}) when  $ g_L(v ,v)<0$,
  (respectively  $ g_L(v ,v)= 0$; $ g_L(v ,v)> 0$)
  and {\em causal} if it is timelike or lightlike. A {\em
spacetime } is a
 Lorentzian manifold $(L, g_L)$ endowed with a
time-orientation. The latter is  determined by some timelike
vector field $Y$, so that a causal vector $v \in T_zL$ is  said
{\em future--pointing} (resp. {\em past--pointing}) if $g_L (v, Y)
<0$ (resp. $g_L (v, Y)
>0$). A piecewise smooth curve $z : [a,b] \rightarrow L $ is said
timelike, lightlike or spacelike if so is $\dot z (s)$ at any $s
\in [a,b]$ where it exists. In particular, non-constant geodesics
$z$ are classified according to the sign of $g_L(\dot z,\dot z)$.

A spacetime $(L, g_L)$ is said stationary if
it admits a timelike Killing vector field $Y$. In this case, one
such a $Y$ that points to the future will be chosen and called the
{\em stationary vector field}. When $Y$ is complete and $L$
satisfies a mild causality condition (to be distinguishing, which
lies between causality and strong causality), then $L$ will be
{\em standard stationary} (see \cite[Proposition 3.1]{js}). More
precisely, $L$ splits (in a non-unique way) as a product $L = S
\times\R$, and the metric $g_L$ is given as
\be \label{stst}
\mbox{$g_L$}_{(x,t)}\big(( y , \tau), ( y , \tau)\big) =
\mbox{$g_0$}_x (y,y) + 2 \mbox{$\omega_0$}_x (y) \tau -
\beta(x) \tau^2 \ee for any $(x,t)\in L$, $( y , \tau) \in T_x
S\times \R $, where $g_0$ is a Riemannian metric on $S$, $\omega_0$
and $\beta$ are, respectively, a smooth vector field and a smooth
positive function on $S$, and, moreover $ Y=
\partial_t $.

In what
follows, $Y$ will be a prescribed complete stationary vector field
in a distinguishing spacetime so that the splitting \eqref{stst}
holds, and the effect of changing the slice $S$ in this splitting
will be taken explicitly into account. So a piecewise  smooth
causal curve $z(s) = (x(s),t(s))$ is future--pointing (resp.
past--pointing) if and only if  $\beta \dot t - \omega_0 (\dot x )
>0$ (resp. $\beta \dot t - \omega_0 (\dot x )
< 0$).

Projections on $S$ of lightlike geodesics of a  standard
stationary spacetime are pregeodesics  for a Randers metric.
Indeed, a lightlike curve $z (s ) = ( x (s) , t (s))$,
parametrized on a given interval, say $s\in I=[a,b]$,  satisfies
\begin{equation}\label{eqlum}
g_0 \big(\dot x (s), \dot x (s)\big) + 2 \omega_0 \big(\dot x (s)\big) \dot t (s)  - \beta (x (s)) \dot t^2 (s)=0
\end{equation}
Taking into account the zeros in $\dot t$  of this equation, define the  Finsler
 metric
\begin{equation}\label{efermat}
F(y)=  \left(\big( \omega_0 (y )^2 +  \beta  g_0 (y ,
y)\big)^{1/2} +\omega_0 (y )\right)\frac{1}{\beta},
\end{equation}
for all $ y \in TS$, as well as its reverse metric
$\tilde F$. According to \cite{cym}, these Finsler
metrics are called {\em Fermat metrics}. Notice that $F$ is of
Randers type, $F=\sqrt{h}+\omega$ with:
\begin{align}
h(y,y)& =  \frac{1}{\beta^2} \omega_0 (y )^2
+ \frac{1}{\beta}  g_0 (y , y)       \label{f1}     \\
\omega (y) & =  \frac{1}{\beta} \omega_0 (y) \label{f2}
\end{align}
for $(x,y) \in TS$. Now, from \eqref{eqlum}, we have two
possibilities for $\dot t$:
 \beq\label{dottfuture} \dot t  = F(\dot x) \quad \quad \dot t =
 -\tilde F(\dot x),
 \eeq
the first equality if $z$ is future--pointing, and the second one
if it is past--pointing. Then, putting, $z(a)=(p,t_p)$ the {\em
arrival time} of the curve $z$, that is,  the value of the $t$
coordinate at $z(b)$, is given by:
\be \label{ar} T (z) = t_p +
\inte F(\dot x) \de s \quad \quad \tilde T (z) = t_p - \inte \tilde
F(\dot x) \de s \ee depending, resp., on if $z$ is future or past--pointing.

The Fermat principle states that $z$ is a critical point of the
(future or past) arrival time if and only if $z$ is a (future or
past) lightlike pregeodesic (i.e. a geodesic up to a
reparametrization) for the spacetime, see \cite{cym}. However, it
is obvious from \eqref{ar}, that these critical curves coincide
with the pregeodesics for $F$ and $\tilde F$. Choosing an
appropriate parametrization we have finally:

 \begin{prop} \label{fp}
Let $(L,g_L)$ be a standard stationary spacetime. A curve of the
type $z (t) = (  x (t),  t ) \in L$, $t \in [t_0,t_1]$, is a
future--pointing, lightlike pregeodesic
if and only  if $x(t)$,  $t\in [t_0,t_1]$, is a unit geodesic for
the Fermat metric $F$ defined by \eqref{efermat}.
\end{prop}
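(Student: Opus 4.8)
The plan is to prove Proposition~\ref{fp} by combining the Fermat‐principle characterization recalled above with an explicit choice of the parametrization, so that the curve $z(t)=(x(t),t)$ uses the global time coordinate $t$ itself as parameter. The key observation is that the proposition does not merely assert that $x$ is a pregeodesic of $F$ (which already follows from the discussion preceding it), but that with the specific parametrization in which the second coordinate equals the parameter $t$, the projection $x$ is in fact a \emph{unit‐speed} geodesic, i.e. $F(\dot x)\equiv 1$. So the whole content is a bookkeeping statement about how parametrizing $z$ by $t$ forces the Fermat parametrization of $x$ to be by arclength.

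First I would establish the forward implication. Assume $z(t)=(x(t),t)$, $t\in[t_0,t_1]$, is a future‐pointing lightlike pregeodesic. By Proposition~\ref{fp}'s antecedent discussion, $x$ is then a pregeodesic for $F$, and from the lightlike condition \eqref{eqlum} together with the future‐pointing branch in \eqref{dottfuture} we have $\dot t = F(\dot x)$. But here $t$ is the parameter, so $\dot t \equiv 1$, whence $F(\dot x)\equiv 1$; that is, $x$ is parametrized with constant Fermat speed equal to $1$. A pregeodesic for $F$ that is parametrized with constant $F$‐speed is an honest $F$‐geodesic, and the speed being $1$ makes it a unit geodesic. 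This gives one direction.

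Conversely, suppose $x(t)$, $t\in[t_0,t_1]$, is a unit geodesic for $F$, so $F(\dot x)\equiv 1$. Define $z(t)=(x(t),t)$. Since $\dot t\equiv 1 = F(\dot x)$, the curve $z$ satisfies the future branch of \eqref{dottfuture}, which by construction of $F$ in \eqref{efermat} is exactly equivalent to the lightlike equation \eqref{eqlum}; hence $z$ is lightlike, and the sign condition $\beta\dot t-\omega_0(\dot x)>0$ (equivalently $\dot t = F(\dot x)>0$) shows it is future‐pointing. Because $x$ is an $F$‐geodesic it is in particular an $F$‐pregeodesic, and by the Fermat principle its lift $z$ is a lightlike pregeodesic of $(L,g_L)$; the required future‐pointing character has just been checked.

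\textbf{The main obstacle} is not in the logical structure, which is short, but in making rigorous the passage between ``pregeodesic with constant speed'' and ``geodesic'': one must invoke that a reparametrization of an $F$‐pregeodesic to constant $F$‐speed yields an actual geodesic, and conversely that geodesics of $F$ have constant $F$‐speed by homogeneity. I would also be careful about the claim, implicit in ``choosing an appropriate parametrization,'' that such a pregeodesic \emph{admits} the parametrization by the coordinate $t$; this requires that $\dot t$ never vanish along a future‐pointing lightlike geodesic, which is precisely guaranteed by the future‐pointing sign condition $\beta\dot t-\omega_0(\dot x)>0$ forcing $\dot t>0$, so that $t$ is a legitimate monotone reparametrization. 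Once these two points are dispatched, the equivalence is immediate from \eqref{ar}–\eqref{dottfuture} and the Fermat principle.
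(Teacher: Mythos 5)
Your proof is correct and takes essentially the same route as the paper: the paper obtains Proposition \ref{fp} directly from the Fermat principle together with the identification in \eqref{ar} of the arrival time with the $F$-length of the projection, remarking only that one must ``choose an appropriate parametrization''. Your argument simply makes that parametrization bookkeeping explicit (parametrizing by $t$ forces $\dot t\equiv 1=F(\dot x)$ via \eqref{dottfuture}, and a constant-speed $F$-pregeodesic is an $F$-geodesic), which is exactly the step the paper leaves implicit.
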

An analogous statement holds for past--pointing lightlike
geodesics and geodesics of the Randers metric $\tilde F$.

However, the relation between stationary spacetimes and Randers
metrics is much deeper and, in particular, involves the full
causal structure of the spacetime \cite{cys}.
Recall that given two points ({\em events}) $w,z \in L$, $w$ is
{\em causally related} to $z$ ($w \leq z$) if either $w=z$ or
there exists a future--pointing, causal curve from $w$ to $z$. The
{\em causal future} of $w \in L$ is the set $J^+ (w) = \left\{ z
\in L \mid w \leq z \right\}$. An analogous definition holds
substituting future--pointing curves with past--pointing ones,  so
obtaining the {\em causal past} of $w$, $J^-(w)$. Spacetimes can
be classified according to their increasingly better causal
properties getting the so called {\em causal ladder} of spacetimes
(see \cite{beeh,ms}). In particular, a spacetime is {\em causal}
when it does not contain any closed causal curve, {\em causally
simple} when it is causal and, for any $w\in  L$, the   causal
futures and pasts $J^\pm (w)$ are closed, and {\em globally
hyperbolic} when it is causal and $J^+(w)\cap J^-(z)$ is compact
for all $w, z$ (for these definitions, recall \cite{BS}). Among
other properties, one has:

\begin{teo}\label{tscr} \cite{cys} Let $L = (S \times \R, g_L)$  be a  standard
stationary spacetime and $(S , F)$ be its associate Randers space
as in \eqref{efermat}. Then:

(1) $L$ is causally simple if and only if the space $(S , F)$ is
convex.

(2) $L$ is globally hyperbolic if and only if the closed
symmetrized balls of the space $(S , F)$ are compact.
\end{teo}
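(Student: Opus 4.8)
The plan is to prove Theorem \ref{tscr} by translating the causal structure of the stationary spacetime $L=S\times\R$ into the Finslerian geometry of its Fermat–Randers space $(S,F)$, using Proposition \ref{fp} as the bridge. First I would establish the fundamental dictionary: for two events $w=(p,t_p)$ and $z=(q,t_q)$, the causal relation $w\leq z$ should be characterizable in terms of the Fermat distance on $S$. Concretely, since future–pointing lightlike curves from $(p,t_p)$ project to curves $x(s)$ with arrival time $t_q=t_p+\int F(\dot x)\,\de s$, I expect the key equivalence
\[
w\leq z \iff t_q - t_p \geq d(p,q),
\]
where $d$ is the (generalized) Fermat distance, and timelike relation corresponds to strict inequality. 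The point is that a future–pointing causal curve exists exactly when the arrival time $t_q$ is at least the minimal Fermat travel time $d(p,q)$ from $p$ to $q$. This reduces the chronological and causal relations entirely to comparing a time difference against a Finsler distance.

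With this dictionary in hand, part (1) follows by unwinding definitions. Causal simplicity means $L$ is causal and each $J^\pm(w)$ is closed. Using the characterization $J^+(w)=\{(q,t_q): t_q\geq t_p+d(p,q)\}$, the closedness of $J^+(w)$ becomes a statement about lower semicontinuity and attainment of the distance function $d(p,\cdot)$ — precisely, that the infimum defining $d(p,q)$ is realized by an actual minimizing geodesic. Thus I would argue that $J^+(w)$ is closed for every $w$ if and only if every pair of points in $S$ is joined by a minimizing Fermat geodesic, which is exactly convexity of $(S,F)$. The causality of $L$ (absence of closed causal curves) should come for free from the positivity of $F$, since a closed future–pointing causal curve would force $t$ to both increase and return, contradicting $\int F(\dot x)\,\de s>0$ along any nonconstant loop. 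Proposition \ref{fp} guarantees that the minimizing projected curves lift to genuine lightlike geodesics, closing the loop between the variational and causal pictures.

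For part (2), global hyperbolicity requires $J^+(w)\cap J^-(z)$ to be compact for all $w,z$. Translating through the dictionary, this intersection projects to the set of points $r\in S$ satisfying $d(p,r)\leq t_z-t_p$ and $d(r,q)\leq t_q-t_z$ simultaneously — that is, a set of the form $\bar B^+(p,r_1)\cap \bar B^-(q,r_2)$. As recalled in the excerpt, such an intersection being compact for all radii is equivalent to the closed symmetrized balls $\bar B_s(x,r)$ being compact. Hence global hyperbolicity of $L$ is equivalent to compactness of the closed symmetrized balls of $(S,F)$. The main technical obstacle I anticipate is not the causal translations themselves but establishing the dictionary rigorously when $d$ is only a \emph{generalized} distance (non-symmetric, $F$ merely positively homogeneous): one must be careful that infima over the forward balls behave well, that the arrival-time functional is lower semicontinuous, and that limit curves of causal curves remain causal — this is where the asymmetry between forward and backward completeness, and the subtleties of the Fermat metric's lack of reversibility, demand the most care.
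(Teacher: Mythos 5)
First, a remark on the comparison itself: the paper contains no proof of Theorem \ref{tscr} --- it is quoted from \cite{cys} --- so your proposal can only be measured against the argument in that reference, whose overall strategy (translating causal relations into statements about the Fermat distance, attainment of infima, Proposition \ref{fp} as the bridge, and intersections of forward/backward balls for part (2)) your outline does broadly follow. There is, however, a genuine flaw at the foundation. Your ``key equivalence'' $w\leq z \iff t_q-t_p\geq d(p,q)$ is false in general, and assuming it begs the question in part (1). The unconditionally true statements are: $I^+(w)=\{(q,t_q):\ t_q-t_p> d(p,q)\}$ and $J^+(w)\subseteq \{(q,t_q):\ t_q-t_p\geq d(p,q)\}=\overline{I^+(w)}$; a boundary event $(q,\,t_p+d(p,q))$ lies in $J^+(w)$ precisely when the infimum $d(p,q)$ is \emph{attained}, because any future-pointing causal curve $z=(x,t)$ satisfies $\dot t\geq F(\dot x)$, so reaching that event forces $\ell_F(x)=d(p,q)$. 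For a concrete counterexample take $S=\R^n\setminus\{0\}$ with $g_0$ Euclidean, $\omega_0=0$, $\beta=1$, and $p,q$ symmetric with respect to the deleted origin: then $t_q-t_p=d(p,q)$ holds while $(q,t_q)\notin J^+(w)$, and this $L$ (Minkowski space minus a timelike line) is indeed not causally simple. Were your dictionary true, $J^+(w)$ would be a sublevel set of the continuous function $(q,t_q)\mapsto d(p,q)-(t_q-t_p)$, hence automatically closed, and part (1) would ``show'' that \emph{every} standard stationary spacetime is causally simple, which is false. Note that your own text is internally inconsistent on exactly this point: right after asserting the characterization $J^+(w)=\{(q,t_q):\ t_q\geq t_p+d(p,q)\}$, you say that closedness of $J^+(w)$ is equivalent to attainment of $d(p,\cdot)$ by minimizing geodesics --- but the characterization you just asserted would make $J^+(w)$ closed with no hypothesis at all.

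The repair is precisely the attainment-sensitive statement you end up invoking, so the architecture is salvageable: prove $I^+(w)=\{t_q-t_p>d(p,q)\}$; prove that $J^+(w)$ equals $I^+(w)$ together with $w$ and those boundary events reachable by causal curves, which by the inequality $\dot t\geq F(\dot x)$ are necessarily projections of minimizing $F$-curves and, via Fermat's principle/Proposition \ref{fp}, lightlike geodesics; conclude that $J^+(w)$ is closed for every $w$ iff every ordered pair of points of $S$ is joined by a minimizing $F$-geodesic (for $J^-(w)$, use the reverse metric $\tilde F$, recalling that convexity of $F$ and of $\tilde F$ are equivalent), while causality of $L$ is free since $t$ increases strictly along future-pointing causal curves. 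Your sketch of part (2) then goes through essentially as written, but note two points you gloss over: compactness of $J^+(w)\cap J^-(z)$ requires its closedness, which in the direction ``compact symmetrized balls $\Rightarrow$ globally hyperbolic'' you must first obtain from part (1) (compactness of the closed symmetrized balls implies convexity of $(S,F)$, \cite[Theorem 5.2]{cys}); and the compactness argument is fibered --- the projection to $S$ sits inside $\bar B^+(p,T)\cap\bar B^-(q,T)$ with $T=t_q-t_p$, while each $t$-fiber is a compact interval.
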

That is, the weakening of the global hyperbolicity condition into
causal simplicity for a stationary spacetime is parallel to the
weakening of the compactness of the closed symmetrized balls into
convexity for $(S , F)$.

\bere \label{NewRemark1}
As suggested above, the standard stationary splitting is not
uniquely determined by, say, the timelike Killing vector field   $Y$.  In fact, it can be changed by
replacing the spacelike hypersurface $S$ by a new one $S'$. Such a $S'$ can be written as a
(spacelike) graph $S'=\{(x,f(x) ) \}$ for some function $f$ (whose differential has  $F$-norm smaller than $1$).
The Fermat metric $F'$ associated to $S'$ satisfies  $F'=F-{\rm d} f$  (with natural identifications,
see details in \cite[Prop. 5.9]{cys}). That is,  the metric $h$ remains invariant (in fact, $h$ is
the metric induced  from the orthogonal distribution to $\partial_t$,  up to the conformal
factor $1/\beta$,   and the one form $\omega$ is ``gauge transformed'' as
$\omega'=\omega-{\rm d} f$.

As
emphasized in \cite{cys},  the hypotheses in Theorem \ref{tscr}  are invariant under such a
change,  as they are related to the conformal geometry of the spacetime  and, then,  they are independent of the
choice of the standard splitting.  Because of this same reason,  the conditions to be studied here
are typically independent of the change $\omega \mapsto \omega -{\rm d} f$. In fact, this is obvious
in Proposition~\ref{peq} and in formula \eqref{asymptflat}, as only conditions on ${\rm d} \omega$ (and not on $\omega$
itself) are involved.
\ere

\subsection{Standard stationary    domains and light-convexity}
 Now, choose  a domain $D$ of $S$ with smooth boundary
$\partial D$ and consider the {\em stationary domain} $D \times
\R$ as a domain of $M=S \times \R$ with boundary $\partial D
\times \R$.  The Finslerian notions of convexity for the
boundaries of domains can be extended to the Lorentzian case, and
we can speak on the {\em infinitesimal} or {\em local convexity}
of $ \partial D\times\R$ (recall Remark \ref{manyconvex} and \ref{local
convexity}). However, in the Lorentzian context  it  is natural to
take into account the causal tripartition of the tangent vectors.
Even more,  also the structure of standard  domains for stationary
spacetimes will be taken into account here. So, consider a
function $\Phi : S \times \R \rightarrow \R$, $\Phi (x,t) = \phi
(x)$ such that \beq\label{Phi}
\begin{cases}
          \Phi^{-1}(0) =  \partial D\times \R \\
          \Phi > 0&\mbox{on $ D \times \R$ }\\
          {\rm d} \Phi(z)\not= 0 &\mbox{for every  $z \in \partial D \times \R$. }
 \end{cases}
\eeq We say that $\partial D \times \R$ is  {\em infinitesimally
time--convex}   (respectively {\em light--convex}) if for any
$z=(x,t) \in
\partial D \times \R$ and for any  timelike (respectively
lightlike) vector $(  y , \tau)  \in   T_x
 \partial D\times  \R$, one has  $H^{g_L}_\Phi \big(  (  y , \tau) , (  y , \tau) \big)
\leq 0$, where $H^{g_L}_\Phi$ denotes the Hessian of $\Phi$ with
respect to the Lorentzian metric $g_L$   (recall Remark~\ref{manyconvex}). Whenever the last
inequality is satisfied with strict inequality, we say that
$\partial D \times \R$  is
{\em strongly time--convex} (respectively {\em strongly
light--convex}).   The infinitesimal
light-convexity can be characterized directly in
terms of the corresponding Fermat metric as follows.
\begin{teo} \label{lighteq}
Let $(L,g_L)$ be a standard stationary spacetime and let $D$ be a
domain of class $C^{2}$ of $S$. Then $(\partial D; F)$ is
infinitesimally
convex (resp.  strongly convex)  if and only
if $(\partial D \times \R; g_L )$ is infinitesimally light--convex
(resp.  strongly light-convex).
\end{teo}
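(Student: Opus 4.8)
The plan is to express both convexity notions as sign conditions on $(\phi\circ x)''$ along appropriate geodesics, and to connect the Lorentzian and the Finslerian pictures through the projection of lightlike geodesics supplied by Proposition~\ref{fp}. Since infinitesimal convexity and infinitesimal light-convexity are each a ``for one and then for all'' condition, I would fix near $x$ a single $\phi$ as in \eqref{defb1} and set $\Phi(x,t)=\phi(x)$, which satisfies \eqref{Phi}. By Remark~\ref{manyconvex}, along any $g_L$-geodesic $z(s)=(x(s),t(s))$ we have $(\Phi\circ z)''(s)=H^{g_L}_\Phi(\dot z,\dot z)$, and since $\Phi$ is independent of $t$ this equals $(\phi\circ x)''(s)$. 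Hence infinitesimal light-convexity at $z(0)=(x,t)$ amounts to $(\phi\circ x)''(0)\le 0$ for every lightlike $g_L$-geodesic with $z(0)=(x,t)$ and $\dot x(0)=y\in T_x\partial D$. Note that a lightlike tangent vector $(y,\tau)$ to $\partial D\times\R$ necessarily has $y\neq 0$ (otherwise $-\beta\tau^2=0$ forces $(y,\tau)=0$), and for each such $y$ exactly two lightlike directions occur, the future-pointing $\tau=F(y)$ and the past-pointing $\tau=-\tilde F(y)$.

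Next I would invoke the correspondence. By Proposition~\ref{fp} and its past-pointing analogue, the $S$-projection $x(s)$ of a future-pointing (resp.\ past-pointing) lightlike geodesic is, up to reparametrization, a geodesic of $F$ (resp.\ of $\tilde F$). Writing $x(s)=\bar x(\sigma(s))$ with $\bar x$ a constant-speed $F$-geodesic, \eqref{evH} gives $(\phi\circ\bar x)''=(H_\phi)_{\dot{\bar x}}(\dot{\bar x},\dot{\bar x})$. The delicate point, and the one I expect to be the crux, is that $\sigma$ is \emph{not} the identity: the affine $g_L$-parameter and the $F$-arclength differ, so a priori $(\phi\circ x)''(0)$ and $(\phi\circ\bar x)''(\sigma(0))$ are unrelated. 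They are reconciled by the fact that $y=\dot x(0)$ is tangent to $\partial D=\phi^{-1}(0)$, whence the first derivative $(\phi\circ x)'(0)=\de\phi(y)=0$ vanishes. Differentiating $(\phi\circ x)'(s)=(\phi\circ\bar x)'(\sigma)\,\sigma'(s)$ once more and evaluating at $s=0$, the term carrying $\sigma''$ drops out, leaving $(\phi\circ x)''(0)=(\phi\circ\bar x)''(\sigma(0))\,(\sigma'(0))^2$. Since $(\sigma'(0))^2>0$, the two second derivatives share the same sign, and the same strict sign; thus $(\phi\circ x)''(0)$ has the sign of $(H_\phi)_y(y,y)$ for $F$ in the future case, and of the analogous $\tilde F$-Hessian in the past case.

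With this reparametrization identity in hand, both implications follow. If $(\partial D;F)$ is infinitesimally convex, then by Proposition~\ref{peq} so is $(\partial D;\tilde F)$; given any lightlike $(y,\tau)$ tangent to $\partial D\times\R$, the future case yields $(\phi\circ x)''(0)$ with the sign of the $F$-Hessian $\le 0$ and the past case with the sign of the $\tilde F$-Hessian $\le 0$, so $H^{g_L}_\Phi\big((y,\tau),(y,\tau)\big)\le 0$ and light-convexity holds. Conversely, light-convexity applied to the future-pointing vectors $(y,F(y))$ with $y\in T_x\partial D\setminus\{0\}$ gives $(H_\phi)_y(y,y)\le 0$ for $F$, that is, infinitesimal $F$-convexity. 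The strong case is identical: every step preserves strict inequalities, the factor $(\sigma'(0))^2$ being strictly positive and the equivalences of Proposition~\ref{peq} holding verbatim for strict convexity.
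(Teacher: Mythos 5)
Your proof is correct, and it takes a genuinely different route from the paper's. The paper never invokes Proposition~\ref{fp}: it computes the geodesic equations of $g_L$ explicitly in terms of $(h,\omega,\beta)$ (its Eq.~\eqref{geoeq}), specializes them to lightlike geodesics, and evaluates $H^{g_L}_\Phi$ through the Levi-Civita connection of $h$ via \eqref{ddotrho}, arriving at the explicit identity $H^{g_L}_\Phi\big((y_0,\tau_0),(y_0,\tau_0)\big)=H^h_\phi(y_0,y_0)\pm\sqrt{h(y_0,y_0)}\,\de\omega(y_0,\nabla^h\phi)$, so that the conclusion is exactly the chain (i)$\Leftrightarrow$(iii)$\Leftrightarrow$(iv) of Proposition~\ref{peq}. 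You instead take the stationary-to-Randers projection (Proposition~\ref{fp} and its past-pointing analogue) as a black box and compare second derivatives across the reparametrization: since $\de\phi(y)=0$ at a tangency, the $\sigma''$-term drops out and $(\phi\circ x)''(0)$ is a positive multiple of the Chern Hessian $(H_\phi)_y(y,y)$ of the Fermat metric, so Proposition~\ref{peq} is needed only for (i)$\Leftrightarrow$(ii), to handle past-pointing vectors via $\tilde F$. Your argument is shorter and connection-free, and it isolates the conceptual reason the theorem holds (signs of second derivatives along curves are reparametrization-invariant at points where the first derivative vanishes); it does leave implicit two standard facts: that $\dot x$ cannot vanish along a lightlike geodesic (so that $t$ is an admissible parameter and Proposition~\ref{fp} applies to the $t$-reparametrized curve), and that the Chern Hessian is positively $0$-homogeneous in the direction argument (needed because the unit $F$-geodesic has velocity $y/F(y)$ rather than $y$). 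What the paper's computational route buys in exchange is an explicit formula rather than a mere sign equality---directly usable through criteria (iii)--(v) of Proposition~\ref{peq} in the asymptotic-flatness applications---and the intermediate equations \eqref{geoeq}, which are reused verbatim in the proof of the time-convexity result (Theorem~\ref{peq2}).
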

\begin{proof}
Observe that the metric $g_L$ can be written as
\[
 g_L \big(( y ,\tau), ( y ,\tau)\big) = \Big( h (y,y) - \big( \tau - \omega ( y )
\big)^2 \Big)\beta,
\]
where $h$ and $\omega$ are defined in \eqref{f1} and \eqref{f2}. Using this expression, we can
easily compute the geodesic equations of $(L,g_L)$. Denoted by $z=z(s)=(x(s),t(s))$ a geodesic of
$(L,g_L)$, its components $x$ and $t$ satisfy the equations
\beq\begin{cases}
\big(\dot t-\omega (\dot x)\big)\beta=\mathrm{const.}:=C_z\\
\frac 1 2\nabla^h\beta\Big(h(\dot x,\dot x)-\big(\dot t-\omega (\dot x) \big)^2\Big)=
\nabla^h_{\dot x}\big(\beta \dot x\big)-C_z \widehat{ \de \omega } (\dot x)
\end{cases}\label{geoeq}
\eeq where $\widehat{ \de \omega }$ is  the $(1,1)$--tensor field
$h$-metrically associated to $ \de \omega$  as in Proposition
\ref{peq} above.

If $z$ is a lightlike geodesic, then  $h (\dot x,\dot x) - \big( \dot t - \omega ( \dot x ) \big)^2 =0$ and
the second equation in \eqref{geoeq} becomes
\[\nabla^h_{\dot x}\dot x=-\frac{h(\nabla^h\beta,\dot x)}{\beta}\dot
x+\frac{C_z}{\beta}\widehat{ \de \omega } (\dot x).\]
If $z$ is future--pointing,  from the first equation in \eqref{dottfuture}  and in \eqref{geoeq}     we have
\[\frac{C_z}{\beta}=\dot t-\omega (\dot x)=\sqrt{h(\dot x,\dot x)}.\]
Hence the equation satisfied by the $x$ component of a
future--pointing lightlike geodesic is \beq\label{eqlightgeo}
\nabla^h_{\dot x}\dot x=-\frac{h(\nabla^h\beta,\dot x)}{\beta}\dot
x+\sqrt{h(\dot x,\dot x)}\widehat{ \de \omega}(\dot x). \eeq Arguing as
above, we can see that the $x$ component of a past--pointing
lightlike geodesic satisfies  equation \eqref{eqlightgeo} with the
$-$ sign instead of $+$ in the right hand side. Let
$(x_0,t_0)\in\partial D\times \R$ and $(y_0,\tau_0)\in
T_{(x_0,t_0)}(\partial D\times \R)$ be a lightlike vector.
Consider the lightlike geodesic $z=z(s)=(x(s),t(s))$ such that
$z(0)=(x_0,t_0)$ and $\dot z(0)=(y_0,\tau_0)$. Since $(\Phi\circ
z)''(s)=H^{g_L}_{\Phi}(\dot z(s),\dot z(s))$ and $\Phi \circ
z=\phi\circ x$,   by using  \eqref{eqlightgeo}, \eqref{ddotrho}
and recalling also that $y_0$ is orthogonal to $\nabla \phi(x_0)$,
we get \[H^{g_L}_\Phi \big( ( y_0 ,\tau_0) ,  ( y_0
,\tau_0) \big)= H^h_\phi (y_0,y_0)\pm\sqrt{h(y_0,y_0)}\, \de \omega
(y_0 ,\nabla^h\phi), \]
with the $+$ sign if $(y_0,\tau_0)$ is
future--pointing and the $-$ sign otherwise. Hence the thesis
follows from Proposition~\ref{peq}.
\end{proof}
\begin{remark}\label{reverse}
There are several subtleties to be taken into account:

(1)  Consistently with Remark \ref{NewRemark1}, the hypotheses of the theorem are invariant
under the change of  the  standard stationary splitting;  indeed, if $D$ is changed to the domain $D'=\{(x,f(x)): x\in D\}$ for some function $f\colon D\to \R$, then $D'\times R$ is clearly equal to $D\times R$ and, by Remark~\ref{NewRemark1}, $\partial D'$ will be infinitesimally convex w.r.t. $F'$. Moreover, the  equivalence between the
convexity for $F$ and $\tilde F$
of $\partial D$ in Proposition \ref{peq} (and also of the domain
$D$), is consistent with the notion  of light-convexity, which
makes no difference between future and past--pointing lightlike
vectors.

(2)  Consistently with  Theorem \ref{lighteq} and Example
\ref{r2}, the light-convexity of $(\partial D \times \R; g_L )$ is
not related to the convexity of $(\partial D; g_0 )$. Indeed the
Randers metric in Example \ref{r2} can be regarded as the Fermat
metric associated to $(\R^2 \times \R , g_L)$ where
$\mbox{$g_L$}_{(x^1 , x^2 , t )}\big( (y^1 , y^2 , \tau), (y^1 ,
y^2 , \tau)\big)  = (1- f(x^1 )^2)(y^1)^2 + (y^2)^2 + 2 f(x^1 )
y^1 \tau - \tau^2$.

(3)
As in the Finsler case,   the notion
of infinitesimal convexity for  a hypersurface of the type $\partial D\times \R$ in
a stationary spacetime
is a  natural extension   of the analogous
convexity for a hypersurface  in a Riemannian manifold.   Moreover, the latter notion  is
trivially extensible to any
 embedded   hypersurface $H$ in any Lorentzian manifold\footnote{Usually, one has to assume that the hypersurface is also non-degenerate but, since  our definition (recall Remark~\ref{manyconvex}) does not involve the second fundamental
form of $H$, the non-degeneracy assumption can be dropped  (cf. \cite[Remark 3]{caponiogelogra}). Clearly, in the non-degenerate case, one recovers the usual condition  about the sign of the second fundamental form.},
not only the stationary ones.
Notice  that, as the
Hessian depends on the Levi-Civita connection rather than on the
metric, the  inequality remains  in the same direction  as  in the
positive-definite case (that is,  no change of sign is required
for timelike directions). However, as pointed out in the
stationary case, this notion of convexity can be weakened
according to the causal character of the involved vectors, that
is, we say that $H$ (expressed locally as $\phi^{-1}(0)$ for some
$\phi$ as in \eqref{defb1}) is {\em infinitesimally light- ({\rm
resp.} time-, space-) convex}  if $(H^{g_L}_\phi)_z(v,v)\leq 0$
for any $(z,v)\in TH$, with $v$ lightlike (resp. timelike,
spacelike).

(4) The notion of {\em local convexity}, explained in Remark
\ref{local convexity}, is also trivially extensible to the
Lorentzian case from the  Riemannian or Finslerian ones, and its
equivalence with infinitesimal convexity  can be also proved by transplanting the
technique in \cite{bcgs}, see
\cite{caponiogelogra}.
Again, in the general Lorentzian case, we can define also {\em
local time-, space- or light-convexity} by considering only
geodesics of the corresponding type. However, its equivalence with
the corresponding infinitesimal notions is subtler, see
\cite{caponiogelogra}.
\ere
Regarding the last point above, recall that lightlike vectors are
points in the boundary of the (open) subsets of both, time and
spacelike vectors and, indeed, if a hypersurface is
infinitesimally time- or space-convex, then it is also
infinitesimally light-convex by continuity. But, in principle, we
cannot state that  it is also locally convex with respect to
lightlike geodesics. Moreover,  in principle,  the proof of the
equivalence between local and infinitesimal convexity in
\cite{bcgs} cannot be extended to local and infinitesimal
lightlike convexity (see \cite[Remark 6]{caponiogelogra}).   Nevertheless, in the case of a {\em standard
timelike hypersurface} $H$ in a standard stationary spacetime (i.e
$H= H_S\times R$, where $H_S$ is a hypersurface in $S$),
Proposition \ref{fp} and Theorem \ref{lighteq} give the
equivalence also in the lightlike case.

\bco\label{lightequiv} Let $(S\times\R, g_L)$ be a standard
stationary spacetime, $H_S$ be a  $C^{2}$  embedded 
hypersurface in $S$.
The hypersurface
$H=H_S\times\R$ in $S\times\R$
 is  infinitesimally light-convex
if and only if it is  locally light-convex. \eco
\begin{proof}
The implication to the left follows easily as in the Riemannian
setting. To prove that  infinitesimal light-convexity implies
local light-convexity, for any $x_0\in H_S$ take a neighborhood
$U_S$ and a function $\Phi\colon U_S\times\R\to\R$,
$\Phi(x,t)=\phi(x)$, $\phi\colon U_S\to\R$ (which  satisfies
\eqref{Phi} with $H_S$ in place of $\partial D$ and
$\phi^{-1}\big((0,+\infty)\big)\cap U_S$ replacing $D$), such that
$H^{g_L}_{\Phi}\big((y,\tau), (y,\tau)\big)\leq 0$, for all
$(y,\tau) \in T (H_S\times\R)$  with $(y,\tau)$ lightlike. By
Theorem~\ref{lighteq}, $H_S$ is infinitesimally convex in $U_S\cap
H_S$ for  the Fermat metric   and, then, by \cite[Theorem
1.1]{bcgs} (recall also Remark~\ref{c2}) it is locally convex in the same neighborhood of $x_0$
in $H_S$ with respect to the geodesics of both, the Fermat metric
in \eqref{efermat} and its reverse metric $\tilde F$ (recall
Remark~\ref{reverse}(1)). This means that for each $x\in U_S\cap
H_S$ the exponential maps with respect to $F$ and $\tilde F$ send
the vectors in a neighborhood of the origin in $T_{x_0} H_S$ into
$\phi^{-1}\big((-\infty, 0]\big)\cap U_S$. From
Proposition~\ref{fp}, the exponential map of $g_L$ maps future and
past--pointing  lightlike vectors in a neighborhood of the origin
in $T_{(x,t)}(S\times\R)$ into $\big(\phi^{-1}\big((-\infty,
0]\big)\cap U_S\big)\times\R$ so that $H$ is locally light-convex
at any
point of $(U_S\times\R)\cap H$.
\end{proof}
\subsection{Time-convexity}\label{timeconvexity}
Randers metrics  can be  also used  to characterize the  convexity of the
boundary of a (stationary) region of a standard stationary
spacetime with respect to timelike geodesics. Actually,
time-convexity  can be reduced to light-convexity in a    suitable
one-dimensional higher product manifold   (see \cite[Subsection
4.3]{cym}), such a trick is valid in a much more general setting
for any Lorentzian   metric \cite{CapMin04, minsan}.   We start by
pointing out some technical properties.

Consider a standard stationary spacetime $(L=S\times\R,g_L)$ as in
\eqref{stst}. Let $\R_u\times S$ denote the product manifold
$\R\times S$ where the subscript $u$ means that the   natural
metric $+du^2$ is considered on $\R (\equiv \R_u)$.
Put $ L_1 =
  (\R_u\times S)\times  \R  \equiv  \R_u\times L$
and denote the usual projections:
\[\begin{array}{ll}
 \Pi_S: \R_u\times S \rightarrow S,
& \Pi_u:\R_u \times S\rightarrow \R_u, \\
\Pi_1\colon\R_u\times L\to \R_u, &\Pi\colon \R_u\times L\to L.
\end{array}\]
Now, endow  the manifold $ L_1 =
  \R_u\times L$ with the standard stationary Lorentzian metric $g_{L_1}$ defined as
\beq\label{enne} g_{L_1}=\Pi_1^*du^2+\Pi^*g_L. \eeq Obviously, a
curve $s\mapsto (u(s), x (s) ,t (s) )$ is a geodesic in $( L_1 ,
g_{L_1} )$  iff 
 $s\mapsto z (s) :=  (x (s) , t(s))$ is a geodesic for $g_L$ and $\ddot u (s) =0$. Then, a
lightlike geodesic for $g_{L_1}$ parameterized with a constant
$\dot u (s)=: \ell$  satisfies $g_L ( \dot z , \dot z ) =  -\ell^2$, so
that $z=z(s)$ is an affinely parametrized timelike geodesic of
$(L,g_L)$,  provided that $\ell\neq 0$.   The Fermat metric for $(L_1=(\R_u\times
S)\times \R, g_{L_1})$ takes the form\footnote{Notice that, for
lightlike geodesics, the construction of the Fermat metric was
conformally invariant and, so, the elements $h, \omega$ where
normalized so that $\beta$ could be regarded as an overall
conformal factor, eventually equal to 1. However, this conformal
invariance does not hold for timelike geodesics, and it is
emphasized by means of the subscript $\beta$.}: \beq\label{ran1}
F_{\beta}=\sqrt{\Pi_S^*h + \frac{\Pi^*_u du^2}{\beta  \circ \Pi_S
}} + \Pi_S^*\omega = \sqrt{h_\beta} +\omega_1. \eeq where $h,
\omega$ are as in \eqref{f1}, \eqref{f2}, and:
$$ h_\beta=\Pi_S^*h
+ \frac{\Pi^*_u du^2}{\beta \circ \Pi_S }, \quad
\omega_1=\Pi^*_S\omega, \quad \mbox{on} \; \R_u\times S.$$  The
arrival time of a future--pointing timelike geodesic $z(s)=
(x(s),t(s))$, parameterized on $[a,b]$, connecting a point $(p,
t_p )$
 of $ S \times
\R$  to a  line  $l (\tau) = (q,  \tau) \in S \times \R$ and such
that $g_L (\dot z , \dot z ) = - \ell^2$  is given by \be \label{ar1}
T(z) = t_p + \int_a^b \left( \sqrt{h( \dot x , \dot x )+ \frac
{\ell^2}{\beta\circ x } }+ \omega  (\dot x) \right) \de s. \ee

For a given domain $D$ of class $C^2$ of $S$ we can study the infinitesimal
convexity of $\partial (\R_u\times D)= \R_u \times
\partial D$ with respect to $F_\beta$  in \eqref{ran1}. Take
$\phi$ as in (\ref{defb1}) globally defined on $S$ (Remark
\ref{local convexity}), and set $\phi_1:\R_u\times S\rightarrow
\R$, as $\phi_1 (u, x ) = \phi (x )$. By Proposition~\ref{peq},
$(\R_u \times
\partial D;F_\beta)$ is convex if and only if
\beq\label{timeconvex} H^{h_\beta }_{\phi_1}\big((v,y),(v,y)\big)
+\sqrt{h_\beta \big((v,y),(v,y)\big)} \de \omega_1  \big((v,y) ,
\nabla^{h_\beta }\phi_1\big)\leq 0, \eeq for all $(u,x)\in
\R_u\times\partial D$ and $(v,y)\in \R_u\times T_x\partial D$. Trivially, $\de \omega_1 \big( (v,y) ,
\nabla^{h_\beta }\phi_1\big) = \de \omega \big(  y  , \nabla^{h
}\phi\big)$.  Moreover, as $(\R_u \times D, h_\beta) $ is a warped product,  taking into account  geodesic equations  in  this kind of manifolds (see e.g.
 \cite[Ch.7, Proposition 38]{onei}) and by using \eqref{evH},  it is not difficult to evaluate the Hessian of $\phi_1$ with respect to $h_\beta$ obtaining
\[H^{h_\beta }_{\phi_1}\big((v,y),(v,y)\big)=H_\phi^h (y,y)  - \frac{h (
\nabla^h\phi , \nabla^h  \beta )}{2 \beta^2 }v^2.\] Summing up,
substituting these expressions in  \eqref{timeconvex}, one has:

\begin{lem} $(\R_u \times
\partial D;F_\beta)$ is infinitesimally
convex if and only if
\beq H_\phi^h(y,y) - \frac{v^2}{2 \beta^2} h (\nabla^h\phi,
\nabla^h \beta) +\sqrt{h(y,y) + \frac{v^2 }{\beta}} \;  \de \omega ( y
, \nabla^{h }\phi) \leq 0 \label{tcon} \eeq for any $ y \in T\partial D$,  $v \in \R$.
\end{lem}
Likewise the  case of lightlike geodesics,  the following result
holds.
\begin{teo} \label{peq2}
Let $(S\times\R, g_L)$ be a standard stationary spacetime    and
let  $D$ be a domain of class  $C^2$ of $S$. Then $(\partial D
\times \R; g_L )$ is infinitesimally time-convex   (resp. strongly time-convex)  if and only if
$(\R_u\times\partial D; F_\beta  )$ is infinitesimally
convex  (resp. strongly convex).
\end{teo}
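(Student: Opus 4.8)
The plan is to reduce the time-convexity statement for $\partial D\times\R$ in $(L,g_L)$ to the light-convexity statement for $\R_u\times\partial D$ in the one-higher-dimensional product spacetime $(L_1,g_{L_1})$, exactly paralleling the strategy used in Theorem~\ref{lighteq}, and then to invoke the characterization of infinitesimal convexity already obtained in Proposition~\ref{peq} (in the form packaged as inequality~\eqref{tcon}). The key geometric observation, already recorded in the text, is that a nonconstant affinely parametrized timelike geodesic $z=z(s)=(x(s),t(s))$ of $(L,g_L)$ with $g_L(\dot z,\dot z)=-\ell^2$ is precisely the $L$-component of a lightlike geodesic $s\mapsto(u(s),x(s),t(s))$ of $(L_1,g_{L_1})$ with $\dot u(s)\equiv\ell\neq 0$. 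This is the bridge that converts a timelike-convexity condition into a lightlike-convexity condition one dimension up.

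\emph{First} I would make precise the translation between the two notions of convexity. The natural defining function for $\partial D\times\R$ inside $L=S\times\R$ is $\Phi(x,t)=\phi(x)$ as in~\eqref{Phi}, and the natural defining function for $\R_u\times\partial D$ inside $L_1=\R_u\times L$ is $\phi_1(u,x)=\phi(x)$; both are pulled back from the same $\phi$ on $S$, so their defining data match under the projection $\Pi$. Using the identity~\eqref{evH} (valid for any affine connection, see Remark~\ref{manyconvex}), infinitesimal time-convexity of $\partial D\times\R$ says $(\Phi\circ z)''\leq 0$ along every timelike geodesic $z$ tangent to $\partial D\times\R$, while infinitesimal light-convexity of $\R_u\times\partial D$ for $g_{L_1}$ says the analogous inequality along every lightlike $g_{L_1}$-geodesic tangent to $\R_u\times\partial D$. \emph{Second} I would note that $\Phi\circ z=\phi\circ x=\phi_1\circ(u,x)$ and that, since $u$ is affine and $\phi_1$ does not depend on $u$, the second derivatives of these two compositions coincide. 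Therefore the two families of inequalities are literally the same computation, indexed in one case by timelike tangent vectors $(y,\tau)\in T(\partial D\times\R)$ and in the other by lightlike tangent vectors $(v,y,\tau)\in T(\R_u\times\partial D)$; the correspondence $\ell=v$, $g_L(\dot z,\dot z)=-\ell^2$ matches these index sets.

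\emph{Third}, having identified the two convexity conditions, I would apply the light-convexity machinery of the product spacetime. Since $g_{L_1}$ is itself standard stationary with Fermat metric $F_\beta$ given by~\eqref{ran1}, Theorem~\ref{lighteq} applied to $L_1$ yields that infinitesimal light-convexity of $\R_u\times\partial D$ for $g_{L_1}$ is equivalent to infinitesimal convexity of $\R_u\times\partial D$ for $F_\beta$; and Proposition~\ref{peq}, specialized and computed in the preceding Lemma, expresses the latter as inequality~\eqref{tcon}. The strong-convexity version follows by carrying the strict inequalities through the same chain (as remarked after Proposition~\ref{peq} and in Theorem~\ref{lighteq}). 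The main obstacle I expect is purely bookkeeping rather than conceptual: one must verify that the timelike tangent vectors of $\partial D\times\R$ are in genuine bijective correspondence with the lightlike tangent vectors of $\R_u\times\partial D$ \emph{without omitting edge cases} — in particular that the $\ell\neq 0$ requirement (needed for $z$ to be a bona fide nonconstant timelike geodesic rather than a null one) exactly excludes the degenerate $v=0$ direction, and that every timelike direction tangent to $\partial D\times\R$ is realized as an initial velocity of such a geodesic. A secondary check is that the normalization $g_L(\dot z,\dot z)=-\ell^2$ versus an arbitrary negative value does not lose generality, which follows because both the Hessian inequality~\eqref{defb2} and the vector being tangent to the hypersurface are invariant under positive rescaling of the tangent vector, so it suffices to test one representative in each timelike direction.
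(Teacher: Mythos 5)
Your guiding idea --- lift timelike geodesics of $(L,g_L)$ to lightlike geodesics of the product $(L_1,g_{L_1})$ and transfer the problem one dimension up --- is exactly the strategy the paper announces at the start of Subsection~\ref{timeconvexity}, but your execution is genuinely different from the paper's proof. The paper never invokes Theorem~\ref{lighteq} on $L_1$ as a black box: it works downstairs in $L$, specializing the geodesic system \eqref{geoeq} to timelike geodesics (obtaining \eqref{timegeoeq} and $C_z/\beta=\sqrt{h(\dot x,\dot x)+v^2/\beta}$) and verifying by direct computation that $H^{g_L}_{\Phi}(\dot z,\dot z)$ equals the left-hand side of \eqref{tcon}, which the Lemma preceding the theorem has already identified with the infinitesimal convexity of $(\R_u\times\partial D;F_\beta)$. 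You replace that computation by two structural facts: geodesics of $g_{L_1}=du^2+g_L$ are pairs (affine function, $g_L$-geodesic), so by \eqref{evH} one gets $H^{g_{L_1}}_{\Phi_1}\big((v,y,\tau),(v,y,\tau)\big)=H^{g_L}_{\Phi}\big((y,\tau),(y,\tau)\big)$; and $L_1$ is itself standard stationary with Fermat metric $F_\beta$ as in \eqref{ran1}, so Theorem~\ref{lighteq} applies to it verbatim. This is legitimate (no circularity, since Theorem~\ref{lighteq} does not depend on Theorem~\ref{peq2}) and arguably cleaner, since the only new computation needed is the one already done in the Lemma giving \eqref{tcon}.

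There is, however, a genuine gap in your claim that the correspondence $\ell=v$ ``matches the index sets.'' It does not: light-convexity of the hypersurface $\R_u\times\partial D\times\R$ in $L_1$ --- which is what Theorem~\ref{lighteq} converts into $F_\beta$-convexity of $\R_u\times\partial D$ --- quantifies over \emph{all} $g_{L_1}$-lightlike tangent vectors $(v,y,\tau)$, including those with $v=0$; these project to $g_L$-\emph{lightlike}, not timelike, vectors tangent to $\partial D\times\R$, so time-convexity says nothing about them directly. Hence the implication [time-convex in $L$] $\Rightarrow$ [light-convex in $L_1$] is not the mere re-indexing you describe. For the infinitesimal statement the repair is one line: the inequality at $v=0$ is the limit of the inequalities at $v\neq 0$, by continuity of the left side of \eqref{tcon} in $v$ (the paper's proof contains the same silent step, since \eqref{tcon} is needed for all $v\in\R$ while timelike vectors only give $v\neq 0$). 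But your final assertion that the strong case ``follows by carrying the strict inequalities through the same chain'' breaks down at exactly this point, because strictness does not survive the limit $v\to 0$: strong convexity of $(\R_u\times\partial D;F_\beta)$ requires a strict inequality also at $v=0$, i.e.\ strong convexity of $(\partial D;F)$, which strong time-convexity does not furnish. Concretely, take $\omega_0\equiv 0$, $h$ flat, $D$ a half-space with $\phi$ affine, and $\beta$ with $h(\nabla^h\phi,\nabla^h\beta)>0$ on $\partial D$: the left side of \eqref{tcon} is then $-v^2h(\nabla^h\phi,\nabla^h\beta)/(2\beta^2)$, strictly negative for every $v\neq 0$ yet zero at $v=0$, so $\partial D\times\R$ is strongly time-convex while $(\R_u\times\partial D;F_\beta)$ is convex but not strongly convex. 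So on your route the strong assertion needs a separate treatment (it in fact forces one to confront an edge case that the paper's own proof also leaves untouched, as it only treats the non-strict inequality).
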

\begin{proof} Let us check that (\ref{tcon})
holds if and only if  the Lorentzian Hessian of $\Phi$  is
non-positive on timelike vectors on the tangent bundle of
$\partial D\times \R$. To this end we argue as in the proof of
Theorem~\ref{lighteq}, with $(L_1,g_{L_1})$ replacing $(L,g_L)$.
This time since  $z=z(s)= (x (s) , t (s))$  is timelike, the
second equation in \eqref{geoeq} becomes \beq
-\frac{\nabla^h\beta}{2\beta} v^2=\nabla^h_{\dot x} \big(\beta
\dot x\big)-C_z \widehat{ \de \omega }(\dot x)=
\beta\nabla^h_{\dot x}\dot x+h(\nabla^h\beta,\dot x)\dot x-C_z
\widehat{ \de \omega } (\dot x),\label{timegeoeq}\eeq where
$-v^2=g_L(\dot z,\dot z)\neq 0$. As $z$ is
future--pointing $\dot t=\omega(\dot x)+\sqrt{h(\dot x,\dot
x)+\frac{v^2}{\beta}}$ and then
$C_z/\beta=\sqrt{h(\dot x,\dot x)+\frac{v^2}{\beta}}$.
Recalling that $\Phi\circ z=\phi\circ x$,
 $$H^{g_L}_{\Phi}\big(\dot
z(s),\dot z(s)\big)= (\phi\circ z)''(s) = H_\phi^h \big(\dot x (s)
, \dot x (s)\big) + h \big(\nabla^h \phi (x (s) ), \nabla^h_{\dot
x (s) } \dot x (s)\big). $$  So, computing $\nabla^h_{\dot x}\dot
x$ from \eqref{timegeoeq} the left-hand side of \eqref{tcon} is
equal to $H^{g_L}_{\Phi}\big(\dot z(0),\dot z(0)\big)$.
\end{proof}
\begin{remark}\label{timeequiv}
 The previous result yields  a chain of
equivalences which, in particular, shows the equivalence between
the infinitesimal and local time-convexities for $(\partial D
\times \R,g_L)$. In fact, from the construction of $g_{L_1}$
above, $(\partial D \times \R,g_L)$ is locally time-convex iff
$(\R_u\times
\partial D\times \R,g_{L_1})$ is  locally
light-convex. By  Corollary~\ref{lightequiv},  this holds iff
$(\R_u\times
\partial D\times \R,g_{L_1})$ is infinitesimally light-convex, and  by
Theorem~\ref{lighteq}, iff  $(\R_u \times \partial D;  F_\beta )$
is infinitesimally convex. Finally, by Theorem \ref{peq2} this
holds iff $(\partial D \times \R,g_L)$ is infinitesimally
time-convex, as required.
\end{remark}
In order to apply
Theorem~\ref{peq2}, the following characterization is useful.
We emphasize that it is  independent of the choice of the standard stationary splitting, in agreement with Remarks~\ref{NewRemark1} and \ref{reverse}(1).
\begin{prop}\label{Rbetaconvexity2}
Consider a Randers space $(S,R)$ as in \eqref{ran} and, for any
function $\beta>0$, the Randers space $(\R_u\times S, R_\beta )$
where $R_\beta$ is constructed as $F_\beta$ in \eqref{ran1}. Let
$D$ be a domain of class $C^{2}$ of $S$. Then
$(\R_u\times
\partial D; R_\beta )$ is infinitesimally  convex if and only if the following
 three  conditions hold  for all $ x\in
\partial D$:
\begin{itemize}
 \item[i)]
 $(\partial D; R)$ is infinitesimally convex, i.e. (Prop.
\ref{peq}),
\[H_\phi^h (y,y)
+ \sqrt{h(y,y)}\,\big| \de \omega (y,  \nabla^h \phi)\big|
 \leq 0 ,\]

 \item[ii)]$\nabla^h\beta$ does not point  outside  $D$ at $x$,
 i.e.
\beq\label{necessary2} 0\leq h_x (\nabla^h\phi, \nabla^h  \beta ),
\eeq
 \item[iii)] for each $y\in T\partial D$,  {\em
either} \beq \de \omega (y,\nabla^h\phi)^2  +  
 \frac{h(\nabla^h\phi,\nabla^h\beta)}{\beta}H^h_{\phi}(y,y)  \leq 0 \label{roots} \eeq
{\em or} $h(\nabla^h \phi,\nabla^h\beta)>0$ and \beq
2H^h_{\phi}(y,y)+\frac{\beta}{h(\nabla^h\phi,\nabla^h\beta)}\de
\omega(y,\nabla^h\phi)^2+\frac{h(\nabla^h\phi,\nabla^h\beta)}{\beta}h(y,y)
\leq 0.\label{quadraticform2} \eeq
\end{itemize}
\end{prop}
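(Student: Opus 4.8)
The plan is to reduce the statement to a one–variable inequality and then to an elementary analysis of a concave parabola. By Proposition~\ref{peq} applied to the Randers manifold $(\R_u\times S, R_\beta)$, together with the warped–product computation of the Hessian $H^{h_\beta}_{\phi_1}$ carried out before the preceding Lemma (none of which uses that $R$ is a Fermat metric, so it applies to the general $R=\sqrt h+\omega$ of the statement), the infinitesimal convexity of $(\R_u\times\partial D; R_\beta)$ is equivalent to inequality~\eqref{tcon} holding for every $x\in\partial D$, every $y\in T_x\partial D$ and every $v\in\R$. I would fix $x$ and abbreviate $A=H^h_\phi(y,y)$, $B=h(\nabla^h\phi,\nabla^h\beta)$, $P=h(y,y)\ge 0$ and $\kappa=|\de\omega(y,\nabla^h\phi)|$; note that $B$ is independent of $y$. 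Since the left–hand side of \eqref{tcon} depends on $v$ only through $t:=v^2\ge0$, and since replacing $y$ by $-y$ leaves $A,P$ unchanged while reversing the sign of $\de\omega(y,\nabla^h\phi)$, requiring \eqref{tcon} for all $y$ and all $v$ is the same as requiring $A-\frac{t}{2\beta^2}B+\kappa\sqrt{P+t/\beta}\le 0$ for all $y$ and all $t\ge0$.

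Next I would extract the two ``global'' conditions. Putting $t=0$ gives exactly $A+\kappa\sqrt P\le0$, i.e. condition~(i), equivalently \eqref{hes3}. Taking the degenerate tangent vector $(v,0)$ (i.e. $y=0$, which is still a nonzero vector in $\R_u\times T_x\partial D$) leaves only $-\frac{t}{2\beta^2}B\le0$, forcing $B\ge0$, which is condition~(ii)~\eqref{necessary2}. Both are thus necessary, and I would assume them from now on. The substitution $s=\sqrt{P+t/\beta}\ge\sqrt P$ turns the remaining inequality into $\tilde g(s):=-\frac{B}{2\beta}s^2+\kappa s+\big(A+\frac{BP}{2\beta}\big)\le0$ for all $s\ge\sqrt P$, a \emph{concave} parabola since $B\ge0$. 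The whole problem is now to decide when the maximum of $\tilde g$ over $[\sqrt P,\infty)$ is non-positive.

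The core computation is this max analysis. When $B=0$ the parabola degenerates to an increasing affine function, so non-positivity on $[\sqrt P,\infty)$ forces $\kappa=0$, i.e. $\de\omega(y,\nabla^h\phi)=0$, which is exactly \eqref{roots} read off at $B=0$. When $B>0$, the vertex sits at $s^*=\kappa\beta/B$ and has value $\tilde g(s^*)=\frac{\kappa^2\beta}{2B}+A+\frac{BP}{2\beta}$; a direct discriminant computation shows $\tilde g(s^*)\le0$ is \emph{precisely} \eqref{quadraticform2} (equivalently, $\tilde g\le0$ on all of $\R$). If instead $s^*\le\sqrt P$, equivalently $\kappa^2\beta^2\le B^2P$, then $\tilde g$ is decreasing on $[\sqrt P,\infty)$, so its maximum there is $\tilde g(\sqrt P)=A+\kappa\sqrt P\le0$ by (i) and convexity holds automatically. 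Hence, granting (i) and (ii), infinitesimal convexity at a given $y$ with $B>0$ is equivalent to the disjunction ``$\kappa^2\beta^2\le B^2P$ \emph{or} \eqref{quadraticform2}''.

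It remains to match this disjunction with condition~(iii), namely ``\eqref{roots} \emph{or} ($B>0$ and \eqref{quadraticform2})''; this is the step I expect to be the main obstacle, as it is where (i) enters in an essential way. I would prove the two disjunctions agree, under (i)–(ii) and $B>0$, via two implications. First, $\kappa^2\beta^2\le B^2P$ implies \eqref{roots}: then $\kappa\le\frac{B}{\beta}\sqrt P$, and with (i) one gets $\kappa^2+\frac{B}{\beta}A\le\kappa\big(\kappa-\frac{B}{\beta}\sqrt P\big)\le0$. Second, \eqref{roots} implies ``$\kappa^2\beta^2\le B^2P$ or \eqref{quadraticform2}'', by contradiction: if \eqref{roots} held while both $\kappa^2\beta^2>B^2P$ and \eqref{quadraticform2} failed, then $A\le-\frac{\beta\kappa^2}{B}$ together with $2A>-\big(\frac{\beta\kappa^2}{B}+\frac{BP}{\beta}\big)$ would force $\frac{BP}{\beta}>\frac{\beta\kappa^2}{B}$, i.e. $B^2P>\beta^2\kappa^2$, a contradiction. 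Since \eqref{quadraticform2} occurs in both disjunctions, these two implications yield their equivalence; combining with the already established necessity of (i) and (ii) then gives the proposition. The only genuinely delicate points are the degenerate direction $y=0$ (needed to force (ii)) and the honest bookkeeping of the alternative $s^*\le\sqrt P$ versus $s^*>\sqrt P$ against \eqref{roots}.
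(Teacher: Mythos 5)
Your proof is correct, and while it shares the paper's (essentially forced) skeleton --- reduction to \eqref{tcon}, which, as you rightly note, holds for any Randers metric $R=\sqrt{h}+\omega$ because the warped-product Hessian computation never uses that $R$ is a Fermat metric, followed by necessity of (i) and (ii) via evaluation at $v=0$ and at the nonzero degenerate vectors $(v,0)$ with $y=0$ --- the core computation is genuinely different. The paper, after recording the necessary conditions \eqref{e1} and \eqref{32}, isolates the $\de\omega$-term and \emph{squares}, converting \eqref{tcon} into the non-negativity of the biquadratic form \eqref{34} in $(r^2,\lambda^2)$; condition (iii) then falls out immediately as the dichotomy ``roots in $r^2$ non-positive'' (which is \eqref{roots}) versus ``discriminant non-positive'' (which, for $h(\nabla^h\phi,\nabla^h\beta)>0$, is \eqref{quadraticform2}). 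You instead keep the inequality unsquared, substitute $s=\sqrt{P+v^2/\beta}$ (in your notation $A=H^h_\phi(y,y)$, $B=h(\nabla^h\phi,\nabla^h\beta)$, $P=h(y,y)$, $\kappa=|\de\omega(y,\nabla^h\phi)|$), and maximize a concave parabola over the half-line $[\sqrt{P},\infty)$. The dichotomy this naturally yields --- vertex $s^*\leq\sqrt{P}$ versus vertex value $\tilde g(s^*)\leq 0$ --- is \emph{not} literally (iii), and the reconciliation you supply (that, granting (i), the condition $\kappa^2\beta^2\leq B^2P$ may be traded for \eqref{roots} inside the disjunction with \eqref{quadraticform2}) is exactly the missing piece; both implications you give there are sound, as are your vertex formulas and the $B=0$ degeneration. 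What each approach buys: the paper's squaring makes (iii) appear with no further matching work, at the cost of justifying the squaring (which itself consumes the necessary conditions) and analyzing a quartic; your route stays with elementary one-variable calculus and makes completely explicit where hypothesis (i) enters the sufficiency direction (the case $s^*\leq\sqrt{P}$, where the maximum is $A+\kappa\sqrt{P}$), a role that in the paper is buried in the validity of the squaring step and in the sign of the leading coefficient $a^2-d^2$ of \eqref{34}.
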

\begin{proof}
Recall that the convexity of  $(\R_u\times \partial D; R_\beta )$
is equivalent to \eqref{tcon}, and put: \baln
\lambda^2=\frac{v^2}{\beta}, &&r^2=h(y,y),&&a\,
r^2=H^h_\phi(y,y),&&b= \frac{h(\nabla^h
\phi,\nabla^h\beta)}{\beta},&&d\, r=\de \omega(y,\nabla^h\phi) \ealn
All these elements except $d$ remain invariant if $y$ is changed
by $-y$. So, define the functions $f_\pm\colon[0,+\infty)\times[0,+\infty)\to
\R$,
$$f_\pm(r,\lambda)=a r^2 -\frac{b}{2}\lambda^2 \pm d\,
r(r^2+\lambda^2)^{1/2}.$$ Then, \eqref{tcon} holds if and only if
\beq\label{31} f_+(r,\lambda)\leq 0 \quad \hbox{and} \quad
f_-(r,\lambda)\leq 0 \eeq for all $r,\lambda\geq0$.  Evaluating
these inequalities at $\lambda=0$, one has $ r^2(a\pm d)\leq 0 $, which
shows the necessity of i) and  gives also
\be\label{e1}
a\leq 0 \quad \text{and} \quad d^2\leq a^2. \ee
Evaluating the same inequalities at $r=0$, we
have \beq\label{32}
0\leq b, \eeq
which proves the necessity of
the  condition  ii).  So, assuming that \eqref{32} holds,
the conditions \eqref{31} are equivalent to
\[
d^2 r^2(r^2+\lambda^2) \leq \left( ar^2-\frac{b}{2}\lambda^2\right)^2,
\] that is:
\beq\label{34} 0\leq (a^2-d^2) r^4
-(d^2+ab)r^2\lambda^2 +\frac{b^2}{4}\lambda^4. \eeq
Finally, under the previous  necessary conditions \eqref{e1}-\eqref{32}, equation  \eqref{34}
holds iff either
its roots,  as a polynomial in the variable $r^2$\, are
non-positive (i.e. $d^2\leq -ab$, in agreement with
\eqref{roots}), or if its discriminant,
 is non-positive  (this can happen only
if $h(\nabla^h \phi,\nabla^h\beta)\neq 0$ and, under
\eqref{necessary2},  it is equivalent to
\eqref{quadraticform2}), as required.
\end{proof}
As an application of the previous result to be applied later,
recall:

\bco\label{betabehaviour} Consider   a geodesically asymptotically   flat
end $E^{(k)}$ of a Randers space  (see \eqref{asymptflat}) and a ball
$D_{\roo}^{(k)}$ as in \eqref{edk}. The   boundary of the  domain $(\R_u\times
D_{\roo}^{(k)}; R_{\beta})$ is   strongly  convex for large $\roo$ if $\beta$
satisfies, as $|x|\to\infty$: \bal\label{betasympt}
&\beta=C_1+O(1/|x|^{q'}),\quad\quad\partial_i\beta=O(1/|x|^{q'+1}),\quad\quad\partial_r\beta
\sim-\frac{C_2}{|x|^{q'+1}}, \eal for some $C_1,C_2>0$, and $q'\in
[0,2q)$, where $\partial_r $ is the vector field
$\partial_r=\frac{x^i}{|x|}\partial_i$ \eco

\begin{proof}
 As the Randers manifold is asymptotically flat, from
Prop.~\ref{largesphere} $(\partial D_{\roo}^{(k)};R)$ is
infinitesimally convex, i.e. $\mathrm i)$ of
Proposition~\ref{Rbetaconvexity2} is  satisfied.
Recalling  \eqref{inverse}, we get
\beq\label{prodotto}
h(\nabla^h\phi_{\roo}^{(k)},\nabla^h\beta)=h^{ij}\partial_i\beta\partial_j\phi_{\roo}^{(k)}=
-2|x|\partial_r\beta+O(1/|x|^{p+q'}),\eeq
 hence
\eqref{necessary2} is  satisfied for $|x|$ large enough. From
\eqref{asymptflat}, \eqref{betasympt} and \eqref{prodotto}, we get
\baln \lefteqn{\beta\de \omega (y,\nabla^h\phi)^2 +H^h_{\phi}(y,y)
h(\nabla^h\phi,\nabla^h\beta)\leq}&\\
&\leq\frac{C_3|x|^2|y|^2}{|x|^{2q+2}}+4|y|^2|x|\partial_r\beta+\frac{C_4|x||y|^2}{|x|^{p+1}}\Big|2|x|\partial_r\beta+O(1/|x|^{p+q'})\Big|\\
&\leq|y|^2\left(\frac{C_3}{|x|^{2q}}-\frac{2C_2}{|x|^{q'}}+\frac{C_5}{|x|^{p+q'}}+\frac{C_6}{|x|^{2p+q'}}\right)<0,
\ealn for all $y\in TS_\rho$,  that is
\eqref{roots} is also satisfied, for all $y\in T_x \partial
D^{(k)}_{\roo}$, provided that $|x|=\roo$ is large enough.
\end{proof}

\section{Applications to asymptotically flat stationary spacetimes}\label{appflat}
\subsection{The notion of asymptotically flat stationary spacetime}\label{KN}
As an application of the  results  in Subsection~\ref{asflat} and
of Proposition~\ref{Rbetaconvexity2} we will consider in the next
subsection   spheres of large radius in the spacelike slice $S$ of an asymptotically flat
stationary spacetime  and, in particular, of the  stationary region
of the Kerr spacetime. But, previously, the notion of
asymptotically flat spacetime is revisited now in the framework of
stationary spacetimes.

 Roughly speaking, for asymptotically flat spacetimes the curvature becomes negligible at large
distances from some region, so that  the geometry becomes
Minkowskian there. This is commonly expressed by assuming the
existence of suitable asymptotic coordinates, so that the
difference between the original metric and Minkowski one (plus
their first and second derivatives) falls-off at an enough fast
radial rate. Penrose conformal boundary \cite{Pen} allows to
circumvent the problem of suitably defining and evaluating limits
with a truly coordinate-free definition of asymptotic flatness, as
done explicitly by Geroch \cite{Ge}. This intrinsic procedure
succeeded (see for example \cite{HE, wald, Frau}) but,  at any
case, the appropriate fall-off behavior in coordinates must be
recovered at some step.

So, in the particular case of  stationary spacetimes, the usual
definition of asymptotic flatness  implies the existence
of a standard stationary splitting with respect to
some spacelike hypersurface $S$ such that for some compact set
$K\subset S$, $S \setminus K$ is a disjoint union of ends,
$E^{(k)},k=1,\dots , m$, each one admitting asymptotic coordinates
$x=(x^1,\ldots,x^n)$ where:
\bal\label{asymptflat3}
&\big(h_{ij}-\delta_{ij}\big)+|x|\partial_{l} h_{ij} +|x|^2\partial^2_{kl} h_{ij}=O(1/|x|^{\alpha}),\nonumber\\
&\omega_j+|x|\partial_{l} \omega_j +|x|^2\partial^2_{kl} \omega_j=   O(1/|x|^{\alpha}),\\
&(\beta-1)+|x|\partial_{l}\beta+|x|^2\partial^2_{kl}\beta=O(1/|x|^{\alpha}), \nonumber
\eal
with $\alpha>1/2$ (see \cite[p.13--14]{BeiSch00} and also \cite{BeiChr96}).

In the case of standard static spacetimes, the integrability of
the orthogonal distribution $Y^\perp$ to the static vector $Y$
selects a (positive definite) Riemannian manifold  --so that the
spacetime notion of asymptotic flatness is simplified into the
more elementary notion of asymptotic flatness for Riemannian
manifolds. Therefore, in this setting, the definition becomes
satisfactory, and it is used systematically for  problems relative
to positive mass and the Riemann-Penrose conjecture (see
\cite{BrCh, bl} and references therein).


Hereafter, as only properties of the geodesics will be required,
we will not need to impose any bound for derivatives of order
greater than $1$. Moreover, according to \eqref{asymptflat}, the
rate of fall-off at infinity can be arbitrarily slow.  As in
Subsection~\ref{asflat},  we will add the surname ``geodesically''
in the definition in order to distinguish  our scarcely
restrictive bounds from the more usual  ones\footnote{Recall that,
for example, in order to have a well defined, unique and
non-necessarily vanishing ADM mass of  a $3$ dimensional
Riemannian manifold $(S,h)$,  the decay rate of $h$ -- which
involves also the Ricci tensor -- must be of order not less than
$1/2$ and not greater than $1$, \cite[Theorems 4.2 and
4.3]{bart}).} \eqref{asymptflat3}.

\begin{defn}\label{referee2def} Let $L=S \times \R$ be a standard stationary
spacetime with prescribed Killing vector field $  Y  =\partial_t$ as
in (\ref{stst}).

$L$ is {\em geodesically conformally asymptotically flat}  if its
associated Randers space  $(S,R)$  is
  geodesically   asymptotically flat, that is, if $h$ and $\omega$ in
\eqref{f1} and \eqref{f2} satisfy \eqref{asymptflat} in some
asymptotic coordinates, for some $p, q>0$.

In this case, $L$ is {\em geodesically asymptotically flat} if
$\beta$ satisfies in asymptotic coordinates:
\bal\label{betasymptbis}
&\beta=1+O(1/|x|^{q'}),\quad\quad\partial_i\beta=O(1/|x|^{q'+1}),
\eal for some $q'> 0$.

\end{defn}
Consistently with Remark \ref{NewRemark1},
  this notion concerns truly geometric elements defined on the
spacelike section $S$ which are independent on the chosen standard
splitting: the norm of $Y$, i.e $\beta=-g(Y,Y)$, the  metric
$h=(g_0/\beta)+\omega^2$  and the cohomology class  of $\omega$ (the latter univocally
determined on the chosen $S$ by the one-form metrically associated
to  the orthogonal projection of $Y$ on $TS$).
In the particular case of static spacetimes,  it is irrelevant if the
spacelike hypersurface $S$  is chosen or not orthogonal to the
static vector field.

Let us now derive  some growth conditions on the metric coefficients of a given standard stationary splitting implying  geodesical  asymptotic flatness   in the sense of
Definition \ref{referee2def}.

\bt\label{g0} Consider a standard stationary spacetime
$(S\times\R, g)$  that satisfies in each end $E^{(k)}$:
\bal \label{asymptflat2}
&\big((g_0)_{ij}-\delta_{ij}\big)+|x|\partial_{l} (g_0)_{ij} =O(1/|x|^{p_0}),\nonumber\\
&(\omega_0)_j+|x|\partial_{l} (\omega_0)_j =   O(1/|x|^{q_0}),\\
&(\beta-1)+|x|\partial_{l}\beta=O(1/|x|^{q'}), \nonumber
\eal for some
$p_0, q_0, q'
>0$.
Then  $(S\times\R, g)$ is geodesically asymptotically flat
(with $p=\min\{p_0, 2q_0, q'\}$ and $q=\min\{q_0,q'\}$). \et
\begin{proof}
We recall that
\[h_{ij}=\frac{(g_0)_{ij}}{\beta} +   \frac{(\omega_0)_i(\omega_0)_j}{\beta^2}\quad\text{and}\quad \omega_i=\frac{(\omega_0)_i}{\beta};\]
hence,
\[h_{ij}=\frac{\delta_{ij}+O(1/|x|^{p_0})}{1+O(1/|x|^{q'})}+\frac{O(1/|x|^{2q_0})}{(1+O(1/|x|^{q'})^2}=\delta_{ij}+O(1/|x|^{\min\{p_0,2q_0,q'\}}).\]
Moreover,
\baln
 \lefteqn{\partial_kh_{ij}= } &  \\
  & -\frac{\partial_k\beta}{\beta^2} (g_0)_{ij}+\frac{1}{\beta}\partial_k(g_0)_{ij}-\frac{2\partial_k\beta}{\beta^3}   (\omega_0)_i(\omega_0)_j  +\frac{1}{\beta^2}\big(\partial_k(\omega_0)_i(\omega_0)_j+(\omega_0)_i\partial_k(\omega_0)_j\big)\\
&=O(1/|x|^{q'+1})+O(1/|x|^{p_0+1})+O(1/|x|^{q'+1+2q_0})+O(1/|x|^{2q_0+1})\\
&=O(1/|x|^{\min\{p_0,q'\}+1})+O(1/|x|^{2q_0+1}). \ealn 
Arguing analogously, we get:
\[\partial_k\omega_i=O(1/|x|^{q'+1})+O(1/|x|^{q_0+1})=O(1/|x|^{\min\{q_0,q'\}+1}).\]
Therefore, defining $p=\min\{p_0, 2q_0, q'\}$ and
$q=\min\{q_0,q'\}$, we see that $h$ and $\omega$ satisfies
\eqref{asymptflat} and then, since $\beta$ satisfies
\eqref{betasymptbis} by assumptions, $(L, g)$ is geodesically
asymptotically flat.
\end{proof}
\bere\label{referee2rem_moved}
  It is worth to observe that the addition
of further fall-off hypotheses for the second derivatives,  as in \eqref{asymptflat3} ,
would not be so innocent as it seems. Recall that such
fall-off hypotheses are commonly   used; in fact,  in order to define
asymptotic flatness by means of the approach based on Penrose
conformal embeddings and boundaries, one uses commonly the
simplifying hypothesis that the spacetime is vacuum (Ricci-flat)
in a neighborhood of the asymptotic boundary $\mathcal{J}^\pm$
(see for example \cite{Frau,HE,Town,wald})\footnote{Even though this
excludes the presence of electromagnetic fields, it is justified
as it  allows one to make simpler statements, leaving to the reader
the task to determine precise fall-off hypotheses for  the energy-momentum tensor (compare, e.g., with
\cite[Section
2.3]{Frau}) whenever the vacuum assumption is dropped.}.
 For a  vacuum  stationary solution  of the Einstein equations, there exist coordinates in which the metric is analytic
\cite{Mu70}. Moreover, for an asymptotically flat and vacuum  stationary solution there exists a system of coordinates in a neighborhood of infinity where  $h_{ij}-\delta_{ij}$, $\omega_i$, $\beta-1$ decay  as $1/|x|$ and each of their derivatives of order $k$,  as $1/|x|^{1+k}$, for any $k\in\N$ (see
\cite[Sect. 2.3]{Chru} and  \cite[Theorem 2.1, Eq. (26) and Lemma 2.5]{dain}).

Nevertheless,   surprising difficulties appear in the stationary
non-static case, because of the implications of the fall-off of
curvature. First, recall that very few examples of exact solutions
modeling {\em vacuum and rotating} isolated objects in general
relativity are presently known. The list of useful solutions
presently consists of the Neugebauer-Meinel dust (a rigidly
rotating thin disk of dust with finite radius surrounded by an
asymptotically flat  vacuum region), and a few variants, see
\cite{MMV1, MMV2}. Moreover, as emphasized by Roberts
\cite{Roberts}, {\em there is no known perfect fluid source which
can be matched to a Kerr vacuum exterior}, as one would expect in
order to create the simplest possible model of a rotating
star\footnote{See, about the difficulty of this problem, the claim
on its over-determinacy in \cite{MS}, as well as the construction
of a solution involving Kerr-Newman spacetime \cite{Mars} (recall
that the latter is  not vacuum around $\mathcal{J}^\pm$).}
---this  contrasts  with the plenitude of solutions which match to
Schwarzschild. So, the true applicability of the results in this
contexts requires accurate hypotheses, as the optimized fall-off
hypotheses in our definition.
\ere

Finally, it is also interesting to check further the consistency
of our notion of  geodesic {\em conformal} asymptotic flatness,
with the classical one obtained by using Penrose conformal
embeddings. On the one hand, in the (conformally) vacuum case the
classical notion of such asymptotic flatness (as explained, for
example, in \cite[Sect. 2.3]{Frau}) imply the existence of a
double cone structure (essentially, two copies of $S^2\times \R$)
for the points of the conformal boundary which are accessible from
the spacetime by means of causal curves. As emphasized in
\cite{FHSatmp}, there are reasons to prefer the recently revisited
notion of {\em causal boundary} to the conformal one, as the
former is a (explicitly intrinsic) general construction,
applicable even when no useful conformal embedding is known
---but, under quite general hypotheses,  it agrees with the conformal boundary when   this  can   be
defined. In the case of stationary spacetimes, the
stationary-to-Randers correspondence allows to characterize the
causal boundary as a double cone structure on the Busemann
boundary of the associated Randers manifold \cite{FHS}; in
particular, it agrees with Penrose's in the classical vacuum case
(see further details in \cite{FH}). On the other hand, in the
classical notion, the Weyl tensor goes to 0 on the boundary. As in
the case of the Riemann tensor, our mild fall-off requirements in
Defn. \ref{referee2def} are tailored for geodesics and do not
imply such a behavior ---but, obviously, our definition and
results are applicable if such an additional hypotheses is
imposed, as in Weyl conformally flat vacuum solutions
\cite[Chapter 20]{SKMHH}.

\subsection{Large spheres in the spacelike slice of an end}
Next, let us apply the results on convexity to  some   hypersurfaces
in  a geodesically  asymptotically flat spacetime.   As in Subsection~\ref{asflat}, we will consider  large spheres
$S^{n-1}(r_0)$ defined in   the  coordinates of each  end as $|x|^2=r_0^2$   ((regarded as the boundary of the region where $|x|^2<r_0^2$, except if otherwise specified).
As a direct
consequence of Proposition \ref{largesphere} and Theorem
\ref{lighteq}:

\begin{cor}\label{NewCorollary1}For any geodesically  conformally asymptotically flat
spacetime, all the hypersurfaces $S^{n-1}(r_0)\times \R$ are
  strongly   light-convex for any sphere  $S^{n-1}(r_0)$ of large
radius $r_0$ (i.e., with $r_0$ greater than some constant).
\end{cor}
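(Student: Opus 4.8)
The plan is to recognize the statement as the immediate composition of two facts already in hand: Proposition~\ref{largesphere}, giving strong convexity of large Randers spheres, and Theorem~\ref{lighteq}, which translates (strong) convexity of $\partial D$ for the Fermat metric into (strong) light-convexity of $\partial D\times\R$ for $g_L$. The work then reduces to checking that the output of the former is precisely the input of the latter, and that the spacetime hypothesis unwinds into the Randers hypothesis of \eqref{asymptflat}.

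First I would unwind Definition~\ref{referee2def}: by definition, a geodesically \emph{conformally} asymptotically flat spacetime is one whose associated Randers (Fermat) space $(S,F)$, with $h$ and $\omega$ as in \eqref{f1}--\eqref{f2}, is geodesically asymptotically flat, i.e.\ $h$ and $\omega$ satisfy \eqref{asymptflat} on each end $E^{(k)}$ for some $p,q>0$. I would stress that this is exactly, and only, the hypothesis required by Proposition~\ref{largesphere}: no decay condition on $\beta$ is needed at this stage, in contrast with the time-convex case (cf.\ Corollary~\ref{betabehaviour}), because the Fermat metric governing lightlike geodesics depends solely on the conformal data of $g_L$.

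Next I would apply Proposition~\ref{largesphere} to $(S,F)$. With $\phi_{r_0}^{(k)}(x)=r_0^2-|x|^2$ and the balls $D_{r_0}^{(k)}$ of \eqref{edk}, the proposition furnishes, for each end, a threshold beyond which $\partial D_{r_0}=S^{n-1}(r_0)$ is \emph{strongly} convex with respect to $F$; taking the maximum over the finitely many ends yields a single radius $\bar r$ that works for all of them simultaneously. Since $S^{n-1}(r_0)$ is smooth, it is in particular a $C^2$ boundary, so the regularity hypothesis of Theorem~\ref{lighteq} is met.

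Finally, with $D=D_{r_0}$ for $r_0>\bar r$, Theorem~\ref{lighteq} gives the equivalence between strong convexity of $(\partial D_{r_0};F)$ and strong light-convexity of $(\partial D_{r_0}\times\R;g_L)$; since $\partial D_{r_0}\times\R=S^{n-1}(r_0)\times\R$, this is precisely the assertion. I do not expect any real obstacle here, the argument being a chain of already-proven equivalences; the only point deserving a line of care is the bookkeeping across the several ends, which is harmless because strong light-convexity is a pointwise condition on the boundary and may be verified end-by-end using \eqref{asymptflat} on each $E^{(k)}$.
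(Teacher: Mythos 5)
Your proposal is correct and is exactly the paper's argument: the corollary is stated there as a direct consequence of Proposition~\ref{largesphere} and Theorem~\ref{lighteq}, which is precisely the chain you describe (unwind Definition~\ref{referee2def} into the hypothesis \eqref{asymptflat}, get strong Fermat-convexity of large spheres, then transfer to strong light-convexity). Your additional remarks on the end-by-end bookkeeping and the irrelevance of any decay for $\beta$ are sound, just more explicit than the paper's one-line proof.
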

However, time-convexity is subtler, as large balls will not
fulfill this property in all asymptotically flat spacetimes. In
fact, recall first the following straightforward consequence of
Theorem \ref{peq2} and Corollary \ref{betabehaviour}.

\begin{cor}\label{NewCorollary2} Let $L=  S \times \R $
be  a geodesically  asymptotically  flat standard
stationary spacetime. If, in addition to \eqref{betasymptbis},
$\beta$ satisfies
\beq\label{decreasing}
\partial_r\beta \sim-\frac{C}{|x|^{q'+1}}\eeq
   being\footnote{The result
follows even if we allow  here and in \eqref{betasymptbis}
$q'=0$.} $q'\in (0,2q)$  and $C>0$, then the hypersurfaces
$S^{n-1}(r_0)\times \R$ are   strongly   time-convex for any sphere
$S^{n-1}(r_0)$ of large radius $r_0$.
\end{cor}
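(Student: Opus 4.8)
The plan is to deduce strong time-convexity directly from the equivalence in Theorem~\ref{peq2}, combined with the strong convexity statement of Corollary~\ref{betabehaviour}, so that the entire argument reduces to checking that the hypotheses of the latter are in force. In other words, the corollary is a bookkeeping combination of two results already at our disposal, and no new geometric input is needed.

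First I would apply Theorem~\ref{peq2} to the domain $D_{r_0}$ of \eqref{edk}, whose boundary inside each end is the sphere $S^{n-1}(r_0)=\partial D_{r_0}^{(k)}$. By that theorem, the strong time-convexity of $S^{n-1}(r_0)\times\R$ in $(L,g_L)$ is equivalent to the strong convexity of $\R_u\times\partial D_{r_0}^{(k)}$ with respect to the Fermat metric $F_\beta$ of the auxiliary product spacetime $L_1=\R_u\times L$. Comparing the construction \eqref{ran1} of $F_\beta$ with the metric $R_\beta$ built in Proposition~\ref{Rbetaconvexity2} from the Fermat metric $R=F$ of $L$, one sees that $F_\beta=R_\beta$; hence it suffices to invoke Corollary~\ref{betabehaviour} end by end. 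Since convexity is an infinitesimal (pointwise) notion, handling each sphere separately is legitimate even though Theorem~\ref{peq2} speaks of the full boundary.

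Next I would verify the hypotheses of Corollary~\ref{betabehaviour}. Geodesic conformal asymptotic flatness (Definition~\ref{referee2def}) is precisely the statement that the associated Randers space $(S,R=F)$ satisfies \eqref{asymptflat} for some $p,q>0$, so each end $E^{(k)}$ is geodesically asymptotically flat as required. For the lapse, the conditions \eqref{betasymptbis} together with the extra hypothesis \eqref{decreasing} are exactly \eqref{betasympt} with $C_1=1$, $C_2=C$, and exponent $q'\in(0,2q)\subset[0,2q)$. Corollary~\ref{betabehaviour} then yields strong convexity of $\R_u\times\partial D_{r_0}^{(k)}$ for all $r_0$ beyond a threshold depending on the end; taking the maximum over the finitely many ends makes the bound uniform, and Theorem~\ref{peq2} delivers strong time-convexity of $S^{n-1}(r_0)\times\R$ for all large $r_0$.

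I do not expect a genuine obstacle, since the result is stated as a straightforward consequence of the two cited results. The only points demanding a little care are the identification $F_\beta=R_\beta$ and the exact matching of the decay data: one must confirm that the admissible range $q'\in(0,2q)$ sits inside $[0,2q)$, and that the sign hypothesis $C>0$ in \eqref{decreasing} supplies the strict positivity $C_2>0$ that, in the proof of Corollary~\ref{betabehaviour}, lets the term $-2C_2/|x|^{q'}$ dominate $C_3/|x|^{2q}$ (using $q'<2q$) and thereby pushes inequality \eqref{roots} to a strict one, upgrading convexity to strong convexity.
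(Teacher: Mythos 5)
Your proposal is correct and follows exactly the paper's route: the paper states this corollary as a ``straightforward consequence of Theorem~\ref{peq2} and Corollary~\ref{betabehaviour}'', which is precisely your combination (Theorem~\ref{peq2} to translate strong time-convexity into strong $F_\beta$-convexity of $\R_u\times\partial D_{r_0}^{(k)}$, then Corollary~\ref{betabehaviour} applied end by end with $C_1=1$, $C_2=C$ and $q'\in(0,2q)\subset[0,2q)$). Your extra care about the identification $F_\beta=R_\beta$ and the uniformity of the threshold radius over the finitely many ends is sound bookkeeping that the paper leaves implicit.
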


\begin{remark} The additional condition on $\partial_r\beta$
in Corollary \ref{NewCorollary2} is crucial for time convexity,
 as  it  can be easily understood from Lorentz-Minkowski spacetime. Recall that the hypersurfaces $S^{n-1}(r_0)\times \R$   in $\mathbb{L}^n$ are time-convex, but not strongly time convex. In fact,  any line ${x_0}\times \R$ with $x_0\in S^{n-1}(r_0)$, regarded as a timelike geodesic, remains in the boundary of the domain and does not leave it. So, the sign of $\partial_r\beta$ will be crucial, as the next
corollary will make apparent. However,  \eqref{decreasing} is not
satisfied by physically reasonable asymptotically flat spacetimes,
as discussed below  (see Proposition~\ref{positivemass}).
\end{remark}

\bco\label{NewCorollary3}  Consider an asymptotically flat
spacetime  as in Theorem \ref{g0}; let $C>0$ be a constant and
let $S^{n-1}(r_0)$ be a sphere of radius $r_0$ in $E^{(k)}$. 
Then the following properties hold:
\begin{enumerate}
 \item if $q'\in (0, 2q_0)$ and
\[\partial_r\beta \sim-\frac{C}{|x|^{q'+1}},\quad \text{as $|x|\to +\infty$,}\]
 then  the hypersurfaces $S^{n-1}(r_0)\times \R$ are   strongly time-convex for
any $r_0$ large enough;
\item if
\[\partial_r\beta \sim\frac{C}{|x|^{q'+1}},\quad \text{as $|x|\to +\infty$,}\]
then, for any $r_0$ large enough, the hypersurfaces $S^{n-1}(r_0)\times \R$ are (strongly   light-convex  but) not  infinitesimally   time-convex.
\end{enumerate}
\eco
\begin{proof}
The first part follows  from  Theorem~\ref{g0}   and Corollary~\ref{NewCorollary2}.
In fact, if  $q'\in (0,q_0]$, we get  that $q=q'$ and  $q'<2q$, while for $q'\in (q_0, 2q_0)$, we have $q=q_0$ and then again $q'<2q$.
For the second part, we first observe that for a large enough $r_0$ the hypersurfaces
$S^{n-1}(r_0)\times \R$ is light-convex by   Theorem~\ref{g0} and   Corollary~\ref{NewCorollary1}. Moreover,
considering $\phi=\phi(r)=\ro^2-r^2$, we get
\baln
h(\nabla^h\phi, \nabla^h\beta)&= -2|x|\partial_r\beta +O(1/|x|^p)|x|\big(\sum_k (\partial_k\beta)^2\big)^{1/2}\\
&<-2|x|\frac{C}{2|x|^{q'+1}}+O(1/|x|^p)|x|O(1/|x|^{q+1})\\
&=-C/|x|^{q'}+O(1/|x|^{p+q'}),
\ealn
which is negative for  $r_0$ large enough.
Thus, Eq. \eqref{necessary2} is not
satisfied on $S^{n-1}(\ro)$ and Proposition \ref{Rbetaconvexity2} plus Theorem
\ref{peq2} implies that the hypersurfaces  $S^{n-1}(\ro)\times \R$ are
not time-convex for  any  $\ro$ large enough.
\end{proof}
\bere  Observe that, as a difference with Corollaries
\ref{NewCorollary1} and \ref{NewCorollary2},   where only
assumptions on the first derivatives of $\omega$ are considered,
in Theorem~\ref{g0} and in Corollary~\ref{NewCorollary3}, also
conditions  on the asymptotic behavior of the one-form $\omega_0$
are imposed (\eqref{asymptflat2} are indeed used). 
Nevertheless, such asymptotic conditions for $\omega_0$ are not
necessary if we consider an appropriate combination with the
behavior of $g_0$, in the spirit of Definition \ref{referee2def}.
Recall that if
one changes  the standard stationary splitting by considering a
slice $S'=\{(x, f(x))\}$ as in Remark~\ref{NewRemark1},  the new
standard stationary spacetime $(S'\times\R, g)$ remains, of
course, geodesically asymptotically flat  and large spheres in any
end $(E')^{(k)}=\{(x, f(x))|x\in E^{(k)}\}$, will produce
hypersurfaces in $S'\times\R$ time-convex or not according to
Corollary~\ref{NewCorollary3}. This is because, as already
recalled in Remark~\ref{NewRemark1}, the metric $h^f$ on $S'$ is
isometric to $h$, $\beta$ is invariant and the one-form $\omega^f$
on $S'$ is the push-forward, by the map $x\in S\mapsto (x,f(x))\in
S$, of $\omega-\de f$. \ere

In the light of Corollary~\ref{NewCorollary3}, it becomes important to give conditions
  in order to understand the sign   of the constant $C$ in the asymptotic behaviour of $\beta$.
Under natural physical hypotheses, this coefficient is the Komar's
mass and, so, its sign is expected to be non-negative \cite[p.
462]{Choque09}. More precisely, following Choquet-Bruhat
\cite[Defn. 9.1, p. 55]{Choque09}, a $4$-spacetime will be called
{\em Einsteinian} when it satisfies Einstein equations with
reasonable conditions for the matter. This definition is
consciously ambiguous. For an asymptotically flat spacetime, here
{\em Einsteinian} will mean just: (a) it fulfills
Definition~\ref{referee2def} with $p=q=q'=1$ (i.e., the natural
power for the asymptotic decay holds), (b) either $g_0$ is
complete or the Komar mass in a $r$-bounded region  that includes
the removed compact subset $K$ is nonnegative (i.e., at each end,
the integral of Komar form $\star d Y^\flat$ on a compact surface
enclosing $K$ is nonnegative) and, (c) in the particular case that
Komar mass of an end is $0$, positive mass theorem is applicable
(recall that Komar and ADM masses are expected to be equal
\cite[Th. 4.13]{Choque09}), so that the spacetime is
$\mathbb{L}^4$. Other natural physical  assumptions will be stated
explicitly in the following result,  which follows from
\cite[Theorem 4.11, p.462]{Choque09} and  Corollary \ref{NewCorollary3}.

\bpr\label{positivemass}
  Let $(S\times\R,g)\neq \mathbb L^4$ be an  
Einsteinian stationary spacetime.   Assume that,  at each end,  there exists a constant $C$ such that
\[\partial_r\beta=C/|x|^{2}+ o(1/|x|^2), \quad\quad\text{as $|x|\to +\infty$}.\]
Moreover, assume that the Ricci tensor $\mathrm{Ric}(g)$ of the metric  $g$ satisfies
\[\mathrm{Ric}(g)(\partial t,\partial t)\geq 0,\]
and   the function   $\mathrm{Ric}(g)(\partial t,\partial t)$  is
integrable on $S$. Then,  at  each end, $C$ must be positive
and,  for  any $r$ bigger than some large $r_0>0$, the
hypersurface $S^{n-1}(r)\times\R$ is not infinitesimally
time-convex.  \epr \bere In the next subsection, we will study
Kerr spacetime as a paradigmatic example and will check
directly the behavior of $\beta$ ensured by
Proposition~\ref{positivemass}. But such  an asymptotic behaviour
holds for any physically reasonable asymptotically flat spacetime,
as discussed above (see also \cite[\S 3.1]{BeiSch00}). \ere
\subsection{Convex shells in  Kerr spacetime}\label{kerrshell}
  We will focus now on Kerr spacetime.  We will check that, even
though large asymptotic spheres   are strongly light-convex, {\em they are
not infinitesimally time-convex},   and take advantage of the  intuition on this
spacetime in order to interpret physically such a result.

Recall that in Boyer-Lindquist coordinates $(r,\theta,\varphi,t)$
the metric of the Kerr spacetime is given by
\[\de s^2=-\frac{\Delta}{\rho^2}\left(dt-a\sin^2 \theta d\varphi\right)^2+\frac{\sin^2\theta}{\rho^2}\left((r^2+a^2)d\varphi-a dt\right)^2+\frac{\rho^2}{\Delta}dr^2+\rho^2d\theta^2,\]
\baln \Delta= r^2-2mr+a^2,  &&\rho^2=r^2+a^2\cos^2\theta, \ealn
where $m$ is the ADM mass and  $a=j/m$ with $j$ the ADM
angular momentum of  the spacetime. The above metric  is
standard stationary outside the stationary  limit  hypersurface   $\mathcal N$
({\em ergosurface}
$r=m+\sqrt{m^2-a^2\cos^2\theta}$),  which
appears in {\em slow } Kerr spacetime (i.e., the black hole model
where $a^2<m^2$). So, by Theorem~\ref{g0}, it is immediate to
check that this region is a  geodesically  asymptotically
flat stationary spacetime; thus, Proposition \ref{largesphere} is
applicable and from Theorem~\ref{lighteq},  the submanifolds
$S^2(\ro)\times \R$
are strongly light-convex for $\ro$ large enough. For
time-convexity, recall that, from the expressions above,
\[\beta=
\frac{r^2-2mr +a^2\cos^2\theta}{r^2+a^2\cos^2\theta}=1-\frac{2mr
}{r^2+a^2\cos^2\theta},  \] 
Thus, (2) of
Corollary~\ref{NewCorollary3} holds and the submanifolds
$S^2(\ro)\times \R$ are not time-convex for $\ro$ large enough.

The same results hold also in the stationary region of the
Kerr-Newman spacetime. In fact, the metric of the Kerr-Newman
solution is obtained from the Kerr one by replacing $\Delta$ with
$\Delta+q^2$, where $q$ is the electric charge of the
spacetime. Therefore,  it is geodesically  asymptotically flat
and, since
\[\beta= 1-\frac{2mr
-q^2}{r^2+a^2\cos^2\theta},\]
(2) of Corollary~\ref{NewCorollary3} is satisfied as well.

\begin{remark}\label{fall}
These results admit the following natural physical
interpretation. First, let us consider the non-time convexity of
the large asymptotic balls. Assume that a pebble is tossed
straight upward. If it lacks escape energy, the gravitational
attraction will make the pebble to reach a maximum value $\ro$ and
fall down. The trajectory of the pebble will be then a timelike
geodesic which violates the local time-convexity of the sphere of
radius $\ro$. More precisely, any timelike geodesic in
Kerr(-Newman)  spacetime with a $r$-turning point $r_0=r(s_0)$
which is a strict local maximum of\footnote{Following the detailed
discussion and nomenclature   about Kerr in
\cite[p.209]{oneikerr}, such geodesics are either (ordinary) {\em
bounded orbits} or (exceptional) {\em crash-crash} ones. Recall
that  the ordinary orbits of Kerr geodesics are  {\em bounded}
(those with two $r$-turning points, one of them a maximum), {\em
flyby} (one $r$-turning point, necessarily a minimum) or {\em
transit} (no $r$-turning points). The exceptional orbits are {\em
spherical} ones (i.e. $r(s)\equiv r_0$), asymptotic orbits to a
spherical one, {\em crash-escape} orbits and {\em crash-crash}
ones. None of these have
 a $r$-turning point $r_0$ except the crash-crash one (as in the example of
 the pebble), where $r_0$ is a maximum}  $r(s)$, violates the
time-convexity of $S^2(r_0)\times\R$.  However, it is easy to
realize that such a behavior does not hold if, instead of a
pebble, a light-ray is emitted (as light-rays initially
propagating both, radially outwards and tangent to a large sphere,
will attain increasing arbitrarily big values of $r_0$), so that
light-convexity is always achieved.


\end{remark}
As a final digression,  we emphasize that, in the computations
above, only   the stationary region  of the spacetime is being
considered. For example, in the Kerr spacetime, such a region lies
outside the  ergosurface $\mathcal{N}$.  In
this hypersurface the Killing vector $\partial_t$ is lightlike
($\beta=0$) and the induced metric on $\mathcal{N}$ is degenerate
at the poles $\theta=0,\pi$. So, it cannot be regarded as a
(timelike) boundary $\partial D\times \R$ of a domain in the same
sense as above. Even though the notions of time and light
convexity would make still sense, new subtler possibilities would
appear. The final applications on connecting geodesics are
expected to hold with independence of the details of this boundary
(see Remark \ref{referee2rem2}). Nevertheless,  for the sake of  completeness, we
make some computations about the convexity of hypersurfaces close
to $\mathcal{N}$. Following \cite[Sect. 7.2]{mas}, we consider the
domain $M^a_\eps:=D_\eps^a\times \R$, for each $\eps>0$, where
\be \label{eda}D^a_{\eps}=\{(x,y,z)\in \R^3 \colon
m+\sqrt{m^2+\eps-a^2\cos^2\theta}<r\}.\ee
These domains $M^a_\eps$ exhaust  $M^a := M^a_{\eps =0}$
 when
$\eps\searrow 0$ and, in the case of Schwarzs\-child spacetime
(i.e. $a=0$), they are just spheres  with radius
$r_\epsilon=2m+\eps$ (greater than Schwarzschild radius $r=2m$).
Observe that in the Schwarzs\-child spacetime, being $\omega\equiv
0$, the Fermat metric reduces to the Riemannian metric
$h=(\frac{1}{\beta}dr^2+r^2\de \theta^2+r^2\sin^2\theta\de
\varphi^2)\frac{1}{\beta}$ where now $(\beta\equiv)
\beta(r)=1-2m/r$. The function  $\phi_\epsilon$,  defining
the sphere of radius $r_\epsilon$ as its $0$-level set and
assuming positive values on the domain $D_\epsilon=\{(x,y,z)\in
\R^3 \colon r_\epsilon<r\}$, is
$\phi_\epsilon(r)=r^2-r_\epsilon^2$.  Thus the gradient (with
respect to $h$) of $\beta$ does not point outside $D_\epsilon$
(i.e. \eqref{necessary2} is satisfied) and \eqref{tcon} becomes
equivalent to the infinitesimal convexity of the sphere $\partial
D_\epsilon$ with respect to $h$.  For Schwarzs\-child
spacetime,   the Christoffel symbols of $h$ involved in the
computation of  $H^h_{\phi_\epsilon}$  on $T\partial
D_\epsilon$ are: \baln \Gamma^r_{\theta\theta}=m
-r_\epsilon\beta,&& \Gamma^r_{\theta\varphi}=0,&&
\Gamma^r_{\varphi\varphi}=(m-r_\epsilon\beta)\sin^2\theta \ealn
hence, from Theorems \ref{lighteq} and \ref{peq2}, $\partial
D_\epsilon\times\R$ is   strongly  light-convex  and strongly
time-convex convex if $m-r_\epsilon\beta   >0 $, i.e. if
$0<\epsilon  < m $ (observe that strong light-convexity seems to
fail at $\theta=0,\pi$ but these values must not to be taken into
account because belong to the boundary of the open subset where
spherical coordinates are injective; clearly by rotating the $r$
semi-axis, also the poles on the sphere of radius $r_\epsilon$
corresponding to $\theta=0,\pi$ can be covered). For $\eps=m$, we
have $m-r_\epsilon\beta=0$ and then $\partial
D_{\epsilon=m} \times\R$ is strongly time-convex but only
light-convex. For arbitrary values of $a$, the shape of the
  ergosurface  changes and such convexities are not expected. However,
    for $\eps$ and $|a|$ sufficiently small,
Kerr regions  $M^a_\eps$ are
strongly   time- and   light-convex too. A simple  proof of this fact (simplifying and
slightly improving the one  in   \cite[Prop. 7.2.1]{mas})  follows  recalling that,  in this case,
 $M^a_\eps$ can be regarded as a small perturbation of   $M^{a=0}_\eps$  and Kerr metric as a small
$C^2$ perturbation of   Schwarzs\-child one,  when $a$ goes to $0$.  By the strong  light-convexity
of  $M^{a=0}_\eps$, any
of  \eqref{hes}-\eqref{hes3}  is satisfied   with a  strict inequality. As $\omega \equiv 0$ and  $
\partial D^{a=0}_\eps$ is compact,  the $h$-Hessian of $\phi_\eps $ is less than a strictly negative
constant, thus the same must   hold for  $\partial D^a_\eps$  in the  Kerr spacetime,
provided that  $a$ is small enough
(to this end, notice that in the Kerr metric  $\omega  = - (2mra \sin^2 \theta d \varphi )/ (\rho^2  -2mr)$  
(see, for example, \cite{GiHeWW09})
and its
derivatives goes to $0$ when $a$ tends to $0$, uniformly on $\partial D_\eps$). For strong
time-convexity we can reason in a similar way,   because, as already observed   above,  \eqref{tcon} holds with strict inequality    on the boundary of Schwarzs\-child spheres.
 Nevertheless, space-convexity does not
hold even for  small $|a|$ (including $a=0$) and small
 $\epsilon>0$, see \cite[Corollary 2]{FlSa}.

Putting together the previous   discussion    plus  Corollary~\ref{NewCorollary1},   we get  the following result  about  light-convexity   of some shells in  
slow Kerr   spacetime:
\begin{cor}\label{cKerrlc}  Let $m > 0$   and $a\in\R$  such that  $a^2 < m^2$.
For any   $\eps\in (0,m)$,   and $\ro\in (0,\infty]$, let
$D^a_{\eps,\ro}$ be the following subset of $\R^3$:
\[
D^a_{\eps,\ro}=\{(x,y,z)\in \R^3 \colon
m+\sqrt{m^2+\eps-a^2\cos^2\theta}<r<\ro\}.\]
Then   there exists $a_0>0$, depending on $\eps$, such that the  stationary
domain $M^a_{\eps,\ro}:= D^a_{\eps,\ro}\times\R $ of Kerr
spacetime $M^a$ has  strongly   light-convex   boundary for each  $|a|<a_0$,
provided that $ \ro $ is large enough.
\end{cor}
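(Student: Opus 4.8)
The plan is to split the boundary $\partial M^a_{\epsilon,r_0}$ into its two disjoint components --- the inner hypersurface $\mathcal{N}^a_\epsilon := \{r = m + \sqrt{m^2 + \epsilon - a^2\cos^2\theta}\}\times\mathbb{R}$ lying close to the ergosurface, and the outer sphere $S^2(r_0)\times\mathbb{R}$ --- and to establish strong light-convexity on each component by a separate, already available argument. Since strong light-convexity is a pointwise condition (the strict version of \eqref{hes3} is asked at every boundary point, and translates into the Lorentzian statement through Theorem~\ref{lighteq}), it suffices to verify it componentwise. Note that on $\mathcal{N}^a_\epsilon$ the shell lies on the side of larger $r$, matching the sign convention used for $M^a_\epsilon$ above, while on $S^2(r_0)\times\mathbb{R}$ it lies on the side of smaller $r$, matching the convention of Proposition~\ref{largesphere}; so each component can be fed directly into the corresponding result. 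Thus I would reduce the corollary to two claims: (a) strong light-convexity of the outer sphere for large $r_0$, uniformly in $|a|<a_0$; and (b) strong light-convexity of the inner hypersurface for $|a|<a_0(\epsilon)$.

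For the outer component I would use that the stationary region of slow Kerr is geodesically asymptotically flat (by Theorem~\ref{g0}, as already recorded in this subsection), so that Corollary~\ref{NewCorollary1} --- equivalently Proposition~\ref{largesphere} combined with Theorem~\ref{lighteq} --- applies: the sphere $S^2(r_0)\times\mathbb{R}$, regarded as the boundary of $\{r < r_0\}$, is strongly light-convex once $r_0$ exceeds some threshold $R$. The only delicate point is the uniformity of $R$ in $a$: for $|a|<a_0$ the Kerr coefficients obey the bounds \eqref{asymptflat} with constants controlled solely by $m$ and $a_0$, so the estimate \eqref{ehessasympInequality} holds with an $a$-independent constant, and $R$ may be chosen uniform over that range.

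For the inner component I would reproduce the perturbation argument sketched above. At $a=0$ (Schwarzschild) one has $\omega\equiv 0$, and the inner boundary is a round sphere on which \eqref{hes3} reduces to the sign condition $m - r_\epsilon\beta > 0$, valid precisely for $\epsilon\in(0,m)$; by compactness of this sphere the $h$-Hessian of the defining function $\phi_\epsilon$ is bounded above by a strictly negative constant on the tangent directions. Since the Kerr metric, the one-form $\omega$ (which equals $-(2mra\sin^2\theta\, d\varphi)/(\rho^2-2mr)$ and vanishes as $a\to 0$), the defining function $\phi_\epsilon^a$, and all the first-order data entering \eqref{hes3} depend $C^2$-continuously on $a$ and converge uniformly to their Schwarzschild values on a fixed compact $r$-shell, the strict inequality \eqref{hes3} persists for all $|a|<a_0(\epsilon)$, whence $\mathcal{N}^a_\epsilon$ is strongly light-convex by Theorem~\ref{lighteq}.

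Setting $a_0=a_0(\epsilon)$, both boundary components are then strongly light-convex for $|a|<a_0$ and $r_0 > R$, which is the assertion; the limiting case $r_0=\infty$ (no outer component) is exactly the situation treated just before the corollary. The step I expect to require the most care is the compatibility of the two regimes --- the inner estimate forces $|a|$ small (depending on $\epsilon$), while the outer estimate forces $r_0$ large --- which is reconciled precisely because the outer threshold $R$ can be made uniform over the fixed range $|a|<a_0$, so the two conditions do not interact. A minor point, already noted for Schwarzschild, is that the apparent failure of the strict inequality at the coordinate poles $\theta=0,\pi$ is an artefact of the spherical chart and is removed by covering those points with a rotated chart.
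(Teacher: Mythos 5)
Your proof is correct and follows essentially the same route as the paper: the paper obtains the corollary in one line by combining the Schwarzschild-to-Kerr perturbation argument for the inner boundary component (the discussion immediately preceding the statement) with Corollary~\ref{NewCorollary1} for the outer sphere $S^2(r_0)\times\R$, exactly as you do. Your write-up is in fact somewhat more explicit than the paper's, since you spell out the componentwise reduction, the opposite sign conventions of the defining functions on the two components, and the uniformity in $a$ of the outer radius threshold, none of which the paper addresses in detail.
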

\section{Applications to topological lensing and causal simplicity}\label{s4}
\subsection{Lightlike geodesics}\label{s4.1}
In  \cite[Proposition 4.1]{cym},  using Proposition \ref{fp}, it  was
proved that  if $(S, F)$ is
forward or backward complete then any point $w =(p,t_p)$ of $L$ can be
joined to a flow line $l= l(\tau)=(q,\tau)$ of the Killing field $\partial_t$ by
means of a  past--pointing  lightlike geodesic $\gamma$ and,
moreover, if $S$ is non--contractible,  a sequence $\{\gamma_m\}$
of such geodesics with  negatively   diverging arrival times exists.
Clearly, from Proposition \ref{fp}, the existence of at least one such a geodesic  is an immediate
consequence of the completeness assumption on $(S,F)$ and of the Hopf-Rinow theorem in Finsler
geometry (cf. \cite[Theorem 6.6.1]{bcs}). From an
infinite-dimensional variational viewpoint, the multiplicity result is obtained applying
L\"usternik--Schnirelmann theory to the energy functional of the Finsler metric. Indeed, if $S$ is
non--contractible, the L\"usternik--Schnirelmann category of the manifold $\Omega_{p,q}(S)$ of
$H^1$-paths between  the points $p$ and $q$  on $S$ is infinite, \cite{fh}, and then the energy functional, which
satisfies the Palais--Smale
condition on $\Omega_{p,q}(S)$, \cite{cym}, must admits infinitely many critical points,  whose energy goes to infinity, and then
infinitely many geodesics  between $p$ and $q$. If $p$ and $q$ lie on the same closed
geodesic,  the  geodesics connecting $p$ to $q$ might  be  the multiple coverings of  the closed one,
as happens for two non-antipodal points on a sphere; on the other hand, even  in this case, the
lightlike geodesics on $S\times\R$ arising from the Finsler ones on $S$ have distinct supports
(while the projections on $S$ of the supports coincide). Notice that this multiplicity result admits
a natural interpretation of {\em topological lensing} of the connecting geodesics. In fact, light
rays emitted at different moments in the past from the stellar object,   represented by $l$,  
will arrive at different directions for the observer at  the single event $w$. 

 In \cite{cys}, the  above mentioned result in \cite{cym}
was reinterpreted and improved. Concretely,  the
completeness  of $(S, F)$ was substituted by the weaker assumption
that  the closed balls $\bar B_s(p,r)$ are compact. In fact, this
condition becomes equivalent to  the global hyperbolicity of $L$
(recall Theorem \ref{tscr}(2)) and, in such a spacetime, any two
causally related points can be joined by means of a
length-maximizing causal geodesic (Avez-Seifert result). As a
consequence, the existence of the arrival-time  maximizing, past--pointing  
lightlike geodesic $\gamma$ follows directly from purely causal
grounds (just realizing that, because of stationarity, $ J^-(p) $
must intersect any  flow line $l$ of $Y$). The variational technique is
required only for the existence of the sequence $\{\gamma_m\}$
and, as also shown in \cite{cys},  the assumption on $\bar
B_s(p,r)$ is enough for this purpose.
Recall that, the existence and multiplicity of connecting light rays is then equivalent to the
existence and multiplicity of geodesics connecting two points in a Randers manifold. The latter
problem is well understood in Finsler geometry (see for example \cite{bcs}) and can appear
because of topological reasons (as explained above) or just by curvature focalization (existence of
conjugate points for the Randers manifold). So, the existence of gravitational lensing is completely
characterized from this viewpoint.

Next, we can extend such results to domains $D$ of $S$ having
convex boundary.  This will be a consequence of the previous
results combined with the results in \cite{bcgs} where, first, the
equivalence between different notions of convexity for the
boundary $\partial D$ of a domain $D$  in a Finsler manifold    was proven and, then, the
convexity of $D$ is characterized as follows (see \cite[Theorem
1.3]{bcgs}):
\begin{teo}\label{main}
Let $D$ be a  $C^2$   domain\footnote{Recall
that \cite[Theorem 1.3]{bcgs} is stated under $C^{2,1}_{\rm loc}$
regularity, but the result holds also for a $C^2$ domain (see \cite{caponiogelogra}).}
of a smooth manifold $M$ endowed with a Finsler
metric $F$ having $C^{1,1}_{\rm loc}$ fundamental tensor (see
\eqref{fundmatr}) and such that the intersection of the closed
symmetrized balls $\bar B_s(p,r)$, $p\in D$,
with $\bar D$ is compact. Then, $D$ is convex if and only if
$\partial D$ is  convex.

Moreover, in this case,  if additionally   $D$ is not
contractible, then any pair of points in $D$ can be joined by
infinitely many connecting geodesics contained  in $D$ and having
diverging lengths.
\end{teo}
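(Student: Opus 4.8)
The plan is to establish the two assertions separately, reducing both to the equivalence between infinitesimal and local convexity of $\partial D$ (Remark~\ref{local convexity}, \cite[Corollary 1.2]{bcgs}, \cite{caponiogelogra}), which I take as available, together with the compactness hypothesis on the symmetrized balls. For the easy implication ``$D$ convex $\Rightarrow \partial D$ infinitesimally convex'' I would argue by contradiction. If infinitesimal convexity failed at some $x_0\in\partial D$, there would be a unit $y_0\in T_{x_0}\partial D$ with $(H_\phi)_{y_0}(y_0,y_0)>0$; the $F$-geodesic $\sigma$ with $\sigma(0)=x_0$, $\dot\sigma(0)=y_0$ then satisfies $(\phi\circ\sigma)'(0)=0$ and, by \eqref{evH}, $(\phi\circ\sigma)''(0)>0$, so $\sigma$ has a strict $\phi$-minimum at $x_0$ and the nearby points $\sigma(\pm\eps)$ lie in $D$. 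Pushing the single contact point slightly into $D$ shows $d_D(\sigma(-\eps),\sigma(\eps))=d(\sigma(-\eps),\sigma(\eps))+o(1)$; since $g$ is $C^{1,1}_{\rm loc}$, short geodesic arcs are the unique $F$-minimizers between their endpoints, so any $D$-minimizing geodesic joining $\sigma(-\eps)$ to $\sigma(\eps)$ is forced to coincide with $\sigma|_{[-\eps,\eps]}$, which meets $\partial D$ — contradicting convexity of $D$.

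For the substantial direction ``$\partial D$ convex $\Rightarrow D$ convex'', I would fix $p,q\in D$ and minimize $\ell_F$ over curves contained in $\bar D$. The compactness of $\bar B_s(p,r)\cap\bar D$ plays the role of completeness in the Hopf--Rinow argument (cf.\ \cite[Theorem 5.2]{cys}): a minimizing sequence stays in one such fixed compact set and, by lower semicontinuity of the length, subconverges to a shortest curve $\gamma$ in $\bar D$. The heart of the matter is then a ``no-touching'' lemma: a length-minimizer in $\bar D$ between interior points cannot meet $\partial D$. Here I would invoke local convexity: since geodesics tangent to $\partial D$ enter the local exterior $M\setminus D$, a small arc of $\gamma$ around any interior contact point can be replaced by a strictly shorter arc still contained in $\bar D$, contradicting minimality. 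Once $\gamma\subset D$, being a local length-minimizer under free variations it is an $F$-geodesic, and minimality gives $\ell_F(\gamma)=d_D(p,q)$; hence $D$ is convex.

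For the multiplicity statement I would pass to the variational setting: geodesics in $D$ from $p$ to $q$ are the critical points of the energy $J(\gamma)=\frac12\int_a^b F^2(\dot\gamma)\,\de s$ on the Hilbert manifold $\Omega_{p,q}(D)$ of $H^1$-paths in $D$. When $D$ is non-contractible, $\Omega_{p,q}(D)$ has infinite Lusternik--Schnirelmann category \cite{fh}, so a min-max scheme produces a sequence of critical values $c_k\to+\infty$. The compactness hypothesis yields the Palais--Smale condition for $J$ (as in \cite{cym}, now under the weaker assumption exploited in \cite{cys}), while convexity of $\partial D$ guarantees that a pseudo-gradient flow of $J$ can be kept inside $D$ and creates no spurious boundary critical points, so the classical deformation lemma applies. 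This yields infinitely many critical points $\gamma_k\subset D$ with $J(\gamma_k)\to+\infty$, that is geodesics of diverging length (for constant-speed geodesics on a fixed interval $\ell_F^2=2(b-a)J$).

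The two genuine obstacles are the ``no-touching'' lemma and the boundary behaviour of the variational flow, and both are resolved by the same ingredient, the local convexity of $\partial D$: in the first it provides the shortening that forbids contact with $\partial D$, and in the second it renders the boundary ``invisible'' to critical point theory, so that the non-contractibility of $D$ can be converted, via Palais--Smale and min-max, into infinitely many interior connecting geodesics. The other delicate point is verifying that the weaker compactness of $\bar B_s(p,r)\cap\bar D$ (rather than forward or backward completeness) still suffices for both the existence of minimizers and the Palais--Smale condition.
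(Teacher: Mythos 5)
Note first that the paper does not actually prove this statement: Theorem \ref{main} is imported verbatim from \cite[Theorem 1.3]{bcgs}, and the only information the paper gives about its proof (Remark \ref{rapostilla} and Subsection \ref{s4.3}) is that it is variational, based on a family of functionals \emph{penalizing} the energy functional by a term built from $\phi$: the infinitesimal convexity \eqref{defb2} keeps the critical points of the penalized functionals away from $\partial D$, the compactness of $\bar B_s(p,r)\cap \bar D$ yields the Palais--Smale condition, and Lusternik--Schnirelmann category gives the multiplicity. Your proposal takes a genuinely different route (direct obstacle-type minimization plus a ``no-touching'' lemma, then bare min-max), and parts of it are sound: the implication ``$D$ convex $\Rightarrow \partial D$ infinitesimally convex'' via a tangent geodesic dipping into $D$ and uniqueness of short minimizers is correct, and the existence of a length-minimizer in $\bar D$ from the compactness hypothesis is correctly argued.

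However, both steps that you yourself flag as ``the two genuine obstacles'' are resolved only by assertion, and as written they fail. (i) In the no-touching lemma you apply the defining property of local convexity (geodesics tangent to $\partial D$ stay locally in the exterior) to the minimizer $\gamma$ at a contact point; but $\gamma$ is known to be a geodesic only on $\gamma^{-1}(D)$ --- at a contact point it is a constrained minimizer, a priori not even differentiable there, so ``tangent'' is undefined and the local-convexity property does not apply to it. Moreover, the claimed replacement by a ``strictly shorter arc still contained in $\bar D$'' is circular: it presupposes that the short minimizer between nearby points of $\bar D$ stays in $\bar D$, which is essentially the statement being proved, and when $\partial D$ contains totally geodesic pieces (allowed, since \eqref{defb2} is non-strict) no strictly shorter replacement exists at all. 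A correct direct argument needs the $C^{1}$ regularity of the obstacle minimizer (no corners at contact points), smoothness of each interior geodesic arc up to its contact endpoint, ODE uniqueness for geodesics (available with $C^{1,1}_{\rm loc}$ fundamental tensor), and only then local convexity forces an interior arc ending tangentially on $\partial D$ into $\{\phi\le 0\}$ while $\phi>0$ along it --- none of which is in your sketch, and this regularity theory is precisely what \cite{bcgs} avoids via penalization. (ii) For multiplicity, the claim that ``convexity of $\partial D$ guarantees that a pseudo-gradient flow of $J$ can be kept inside $D$'' is the crux, not a citable fact: $\Omega_{p,q}(D)$ is an open, incomplete subset of $\Omega_{p,q}(M)$, the energy functional does \emph{not} satisfy Palais--Smale on it (Palais--Smale sequences can converge to curves touching $\partial D$), and the deformation lemma breaks exactly there. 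Repairing this is what the penalization term of \cite[Section 4]{bcgs} is for --- it blows up at $\partial D$, restores completeness and Palais--Smale, and \eqref{defb2} guarantees the penalized critical points stay uniformly inside $D$ as the penalization is removed; your proposal supplies no substitute for this mechanism.
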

\begin{remark}\label{rapostilla}
From the technical viewpoint, an improvement of the previous
result will become relevant later. Recall that we can consider
also the distance $d_s^{\bar D}$ obtained by computing the
distances (the Finslerian distance $d$ and then the symmetrized
one $d_s$) by using only curves contained entirely  in $\bar D$.
This distance is intrinsic to $\bar D$, i.e., it is independent of
the extension of $F$ outside. Obviously, $d_s^{\bar D}$ is greater
or equal to the restriction of $d_s$ to $\bar D$ and, so, the
corresponding balls satisfy $\bar B_s^{\bar D} (p,r) \subset \bar
B_s(p,r) \cap \bar D$. Then, if $\bar B_s(p,r)$ is compact so is
$\bar B_s^{\bar D}(p,r)$, but the converse does not hold (see
Example \ref{noncompact} below). By checking the proof of Theorem
\ref{main} in \cite{bcgs}, it is not hard to prove:
\begin{quote} {\em All the conclusions of Theorem \ref{main} hold if
the hypothesis on the compactness for $\bar B_s(p,r)$, is replaced
by the more general hypothesis of compactness for the intrinsic
balls $\bar B_s^{\bar D}(p,r)$, $p\in D$.}
\end{quote}
In fact, the compactness of the sets $\bar D\cap \bar B_s(p,r)$ is
used in Lemma 4.2 of \cite{bcgs} and in the proof of the
Palais-Smale condition for the functionals of a family perturbing
the energy functional of the Finsler manifold (see \cite[Prop.
4.3]{bcgs}), to ensure that the supports of any sequence of
absolutely continuous curves $\gamma_n\colon [0,1]\to D$,
$\gamma_n(0)=p$, $\gamma_n(1)=q$, $p, q\in D$, such that
\[\int_0^1 F^2(\dot\gamma_n)\de s\leq
C,\]
for a constant $C>0$ independent of $n$,
are contained in a compact subset of $M$. The same holds if the intrinsic balls $\bar B_s^{\bar D}(p,r)$, $p\in D$,
are compact. In fact
\[d^{\bar D}(p,\gamma_n(s))\leq
\int_0^sF(\dot\gamma_n)\de\tau\leq
\left( \int_0^1 F^2(\dot\gamma_n)\de s \right)^{\frac 1 2}\leq C^{1/2}
\]
and analogously $d^{\bar D}(\gamma_n(s),q)\leq C^{1/2}$, hence the
supports of the curves $\gamma_n$ are contained in the subset
$\bar B^{\bar D}_s(p,C^{1/2}+\frac{d^{\bar D}(q,p)}{2})$ of $\bar
D$. \end{remark}

Our main result for lightlike geodesics is then:
\begin{teo} \label{appl1}
Let $L=S\times\R$ be a  $C^2$
spacetime endowed with a $C^{1,1}_{\rm loc}$
 standard stationary  Lorentzian metric $g_L$, and let    $D$ be a  $C^2$
domain of   $S$. Consider on $S$ the Fermat metric $F$
defined in \eqref{efermat} (see also \eqref{ran}, \eqref{f1} and
\eqref{f2}), and assume that any intrinsic closed symmetrized ball
$\bar B_s^{\bar D}(p,r)$, $p\in D$, is compact (which happens, in
particular,  if any   $\bar B_s(
 p,r)  \cap\bar D$ is compact). Then, the following assertions
are equivalent:
\begin{enumerate}

 \item $(D\times\R,g_L)$ is causally simple.

 \item $(D,F)$ is convex.

 \item $(\partial D ; F)$ is convex  (infinitesimally or, equivalently, locally,  \cite[Corollary 1.2]{bcgs}).

 \item $(\partial D \times \R; g_L)$ is light-convex  (infinitesimally or, equivalently, locally, recall Corollary~\ref{lightequiv}).

\item Any point $w  = (p , t_p ) \in D \times \R$ and any line
$l_q:=\{ ( q,\tau ) \in D \times \R: \tau \in \R\}$, with $p\neq
q$, can be joined in $D\times\R$ by means of a future--pointing
lightlike geodesic $z (s) = (x (s),  t (s))$ which minimizes the
(future) arrival time $T$ in (\ref{ar}) (equivalently, such that
$x$ minimizes the $F$-distance in $D$ between  $p$ and $q$).

\item Any $w$ and $l_q$ as above can be joined in $D\times\R$ by
means of a past--pointing lightlike geodesic $z (s) = (\tilde x
(s),  \tilde t (s))$ which  maximizes   the (past) arrival time $\tilde T$ in
(\ref{ar}) (equivalently, such that $\tilde x$ minimizes the
reverse $F$-distance in $D$ between  $p$ and $q$).
\end{enumerate}
 In this case, if $D$  is not contractible, then
sequences of both,  future--pointing and  past--pointing lightlike
geodesics joining $w=(p,t_p)$ and $l_q$ contained  in $D\times\R$
and with diverging arrival times,   exist  for all $p, q\in D$.
\end{teo}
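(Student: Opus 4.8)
The plan is to establish the chain of equivalences $(1)\Leftrightarrow(2)\Leftrightarrow(3)\Leftrightarrow(4)$ from the structural results already proved, then to obtain $(2)\Leftrightarrow(5)$ and $(2)\Leftrightarrow(6)$ through the Fermat correspondence of Proposition~\ref{fp} together with the arrival-time formula \eqref{ar}, and finally to deduce the multiplicity assertion from the non-contractible case of Theorem~\ref{main}.

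First I would note that $D\times\R$ is itself a standard stationary spacetime, its slice being $D$ and its Fermat metric being the restriction $F|_D$, since the defining expression \eqref{efermat} involves only $g_0,\omega_0,\beta$ evaluated on $D$. Hence $(1)\Leftrightarrow(2)$ is exactly Theorem~\ref{tscr}(1) applied to $D\times\R$, a step that requires no compactness hypothesis. For $(2)\Leftrightarrow(3)$ I would appeal to Theorem~\ref{main}: the stated compactness of the intrinsic balls $\bar B_s^{\bar D}(p,r)$ is precisely the hypothesis under which, by the sharpening recorded in Remark~\ref{rapostilla}, the convexity of the domain $D$ is equivalent to the convexity of its boundary $(\partial D;F)$. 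Finally, $(3)\Leftrightarrow(4)$ follows by combining Theorem~\ref{lighteq} (which equates the infinitesimal convexity of $(\partial D;F)$ with the infinitesimal light-convexity of $(\partial D\times\R;g_L)$) with Corollary~\ref{lightequiv} and \cite[Corollary~1.2]{bcgs}, which identify the infinitesimal and local notions on the spacetime and the Finsler side respectively.

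Next I would handle $(2)\Leftrightarrow(5)$ and $(2)\Leftrightarrow(6)$ via \eqref{ar}. Assuming $(2)$, choose a minimizing $F$-geodesic $x$ in $D$ from $p$ to $q$ and lift it by Proposition~\ref{fp} to a future-pointing lightlike geodesic $z=(x,t)$ contained in $D\times\R$; since $T(z)=t_p+\ell_F(x)$ and $x$ realizes the $F$-distance in $D$, this $z$ minimizes the future arrival time, which is $(5)$. Conversely, any arrival-time minimizer as in $(5)$ has, by \eqref{ar}, an $x$-component realizing the $F$-distance between $p$ and $q$, hence a minimizing $F$-geodesic, so that every pair of points of $D$ is joined by a minimizing geodesic and $(2)$ holds. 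Since convexity for $F$ is equivalent to convexity for $\tilde F$, the identical reasoning applied to the reverse metric, whose past arrival time satisfies $\tilde T(z)=t_p-\ell_{\tilde F}(\tilde x)$, yields $(2)\Leftrightarrow(6)$, because maximizing $\tilde T$ amounts to minimizing the reverse $F$-distance.

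For the concluding statement I would use once more the non-contractible case of Theorem~\ref{main}, available under the intrinsic-ball hypothesis by Remark~\ref{rapostilla}: when $D$ is not contractible, $p$ and $q$ are joined in $D$ by infinitely many $F$-geodesics of diverging length, and Proposition~\ref{fp} lifts each of them to a future-pointing lightlike geodesic in $D\times\R$ whose arrival time $t_p+\ell_F$ therefore diverges; the same argument for $\tilde F$ produces the past-pointing family. The step I expect to be most delicate is the compactness bookkeeping underlying $(2)\Leftrightarrow(3)$ and the multiplicity: the only compactness available is that of the \emph{intrinsic} balls $\bar B_s^{\bar D}(p,r)$, which is strictly weaker than compactness of $\bar B_s(p,r)\cap\bar D$, so one must invoke the refined Palais--Smale estimates of Remark~\ref{rapostilla} rather than Theorem~\ref{main} in its literal form.
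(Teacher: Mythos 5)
Your proposal is correct and follows exactly the paper's own route: $(1)\Leftrightarrow(2)$ via Theorem~\ref{tscr}(1), $(2)\Leftrightarrow(3)$ via Theorem~\ref{main} strengthened by Remark~\ref{rapostilla} to the intrinsic-ball hypothesis, $(3)\Leftrightarrow(4)$ via Theorem~\ref{lighteq} (with Corollary~\ref{lightequiv} and \cite[Corollary 1.2]{bcgs} for the local/infinitesimal identifications), $(5),(6)\Leftrightarrow(2)$ via Proposition~\ref{fp} and \eqref{ar}, and the multiplicity from the last statement of Theorem~\ref{main}. The paper's proof is merely a list of these citations, so your write-up is the same argument with the details (including the correct observation that the intrinsic-ball hypothesis forces the use of Remark~\ref{rapostilla} rather than Theorem~\ref{main} verbatim) filled in.
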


\begin{proof}
For (1) $\Leftrightarrow$ (2) apply Theorem \ref{tscr}(1); for (2)
$\Leftrightarrow$ (3), Theorem \ref{main}  (plus Remark~
\ref{rapostilla});  for (3) $\Leftrightarrow$ (4)  Theorem
\ref{lighteq}. Finally,
 (5) (or (6))  $\Leftrightarrow$ (2) follows from Proposition~\ref{fp},
recalling also \eqref{ar} (notice also that (5) $\Leftrightarrow$
(1) holds in a more general context, see Remark \ref{rrr}(2)
below). The multiplicity result is a direct consequence of the
last statement in  Theorem \ref{main}.
\end{proof}

\begin{remark}\label{rrr} (1)  In other
results on the connection of $w$ and $l_q$ with a lightlike
geodesic \cite{fgm}, both, the light-convexity of $\partial D
\times \R$ and some  global assumptions on the growth of $\omega$ and $\beta$ in a standard
stationary splitting are required as sufficient conditions. In
comparison, our result is optimal because: (a) it characterizes
the light-convexity of $\partial D \times \R$ in terms of the
convexity of $(D,F)$, and (b) it gives the natural global
hypotheses (i.e., the compactness of  $\bar B_s^{\bar D}(p,r)$
--or at least  of $\bar B_s(p,r)\cap\bar D
 $)  which,  as it can be easily showed,
is always implied by the global assumptions in the
previous results.

The ambient hypothesis (compactness of  $\bar B_s^{\bar D}(p,r)$
in (b)) is not strictly necessary, but it appears clearly as a
natural hypothesis ---not just as a technical assumption.  In
fact, even for a Riemannian manifold, if the boundary of a domain
$D$ is supposed to determine the convexity of the domain, one
assumes typically that the domain is included in a complete
Riemannian manifold. This hypothesis can be weakened by the more
intrinsic assumption  that the   closed balls in $\bar D$ are
compact. Nevertheless, no more true generality is obtained then:
when the last hypothesis  holds  (either in the version that the
closed balls of $\bar D$ are intrinsic or in the one that they are
the intersection of balls in $S$ with $\bar D$ are compact), one
can modify the original Riemannian metric outside $\bar D$ so that
it becomes complete\footnote{One
could   try to improve this hypothesis by replacing it
with some condition intrinsic to $D$ (i.e, independent of if it is
regarded as a domain of a bigger manifold) or relaxing the
hypotheses of smoothness on $\partial D$. Nevertheless, such
conditions are rather technical even in the Riemannian case
\cite{bgs} (and, thus, even in the simple static case). Moreover,
the non-trivial relation between the symmetrized distance $d_s$
and the generalized distance $d$ in a Finsler manifold,
complicates more the situation  ---recall, for example, that $d_s$
may not be a length  metric,  which must be taken into account in the
picture of the intrinsic Cauchy boundary, see \cite[Remark
3.23]{FHS}. }. In the stationary/Finsler case, however, we
retain explicitly the weakest hypothesis, due to the subtleties of
the symmetrized distance (plus the  existence of other elements in
the full spacetime).

Our optimal result was known for the special case of standard
static spacetimes ($\omega_0 \equiv 0$ in (\ref{stst})) which is
simpler, as the associated Randers space becomes indeed a
Riemannian one (see \cite[Proposition 5.1]{bgs1}).

(2) The appearance of causal simplicity in Theorem \ref{appl1} can
be seen from the following general viewpoint. Consider any
causally simple spacetime $L$, a point $p\in L$ and an inextensible
future-directed causal curve $l$, which enters in $J^+(p)$ at some
point $q=l(t_0)$. Necessarily, $q\in \overline{J^+(p)}\setminus
I^+(p)$ and, as $J^+(p)$ is closed, $q$ belongs to the horizon
$J^+(p)\setminus I^+(p)$. Then, necessarily there exists a
future-directed lightlike geodesic $\gamma$ from $p$ to $q
(=l(t_0))$. Moreover, $\gamma$ minimizes the ``proper arrival
time''  of the observer whose world-line is a reparametrization of $l$,  in the sense that $t_0$ is the minimum value of the
parameter $t$ of $l$ such that $p$ can be connected with $l(t)$ by
means of a future-directed causal curve. In fact, a causal
spacetime $L$ is causally simple if and only if for any point $p$
and any future-directed (resp. past-directed)  causal
 curve  
$l$ which
enters $J^+(p)$ (resp $J^-(p)$), there exists a first point
$l(t_0)$ such that $l(t_0)\in  J^+(p) $ (resp. $l(t_0)\in  J^-(p) $)
---which necessarily can be joined with $p$ by means of a
lightlike geodesic.
\end{remark}
As a direct consequence of Theorem~\ref{appl1} and Corollary
\ref{cKerrlc}, we get:
\begin{cor}\label{corollario}
The regions  $M^a_{\eps,\ro}$ of  Kerr spacetime (see Corollary
\ref{cKerrlc}) are  causally simple  provided that  that $r_0$ is large enough and, for each $\epsilon$, $|a|\geq 0$ is small enough.
Thus, in this
case, for any point $ w=(p,t_p)$ and any line $l_q(\tau) = (q,  \tau) $  in
$M^a_{\eps,\ro}$,
both, a sequence of future--pointing and one  of past--pointing
lightlike geodesics connecting $w$ and $l_q$ with diverging
arrival times, exist.
\end{cor}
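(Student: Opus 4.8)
The plan is to read off Corollary~\ref{corollario} from Theorem~\ref{appl1}, using Corollary~\ref{cKerrlc} to supply the light-convexity hypothesis and the finiteness of $\ro$ to supply the ambient compactness hypothesis. In essence the whole task reduces to verifying that, for $\ro$ finite and large and $|a|$ small, the domain $D^a_{\eps,\ro}$ satisfies all the assumptions of Theorem~\ref{appl1}; the conclusions (causal simplicity and the multiplicity of connecting lightlike geodesics) then follow at once.

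First I would fix $\eps\in(0,m)$ and invoke Corollary~\ref{cKerrlc} to obtain $a_0>0$ such that, for $|a|<a_0$ and $\ro$ large enough, the boundary $\partial D^a_{\eps,\ro}\times\R$ is strongly light-convex; in particular it is infinitesimally light-convex, which is exactly condition (4) of Theorem~\ref{appl1} for $D=D^a_{\eps,\ro}$. Along the way I would record that the ambient hypotheses hold: the Kerr metric is smooth (hence $C^{1,1}_{\rm loc}$) on the stationary region, the inner boundary is a smooth ($C^2$) hypersurface since $m^2+\eps-a^2\cos^2\theta>0$, and, because $\eps>0$, this boundary lies strictly outside the ergosurface, so $\beta$ is bounded below by a positive constant on $\bar D^a_{\eps,\ro}$ and the Fermat metric $F$ of \eqref{efermat} is a genuine (smooth) Finsler metric near $\bar D^a_{\eps,\ro}$.

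Next I would verify the compactness requirement. Taking $\ro$ finite, the closure $\bar D^a_{\eps,\ro}$ is closed and bounded in $\R^3$, hence compact; consequently, for every $p\in D^a_{\eps,\ro}$ and $r>0$, the set $\bar B_s(p,r)\cap\bar D^a_{\eps,\ro}$ is a closed subset of a compact set and thus compact. This is precisely the sufficient condition for the compactness of the intrinsic balls $\bar B_s^{\bar D}(p,r)$ singled out in Theorem~\ref{appl1}. With condition (4) and this ambient hypothesis in force, the chain of equivalences (1)--(6) of Theorem~\ref{appl1} applies, and in particular (4) yields (1), i.e.\ $(D^a_{\eps,\ro}\times\R,g_L)$ is causally simple.

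Finally, for the multiplicity I would observe that $D^a_{\eps,\ro}$, being the region between the topologically spherical inner boundary and the coordinate sphere $r=\ro$, is diffeomorphic to $S^2\times(0,1)$ and hence deformation retracts onto a $2$-sphere; it is therefore not contractible. The concluding assertion of Theorem~\ref{appl1}, valid for all $p,q\in D$, then produces the announced sequences of future- and past-pointing lightlike geodesics joining $w=(p,t_p)$ and $l_q$ with diverging arrival times. I do not expect any genuine obstacle here: the argument is wholly a matter of matching hypotheses, the only two points deserving explicit comment being the compactness of $\bar D^a_{\eps,\ro}$ (guaranteed by the finiteness of $\ro$) and the non-contractibility of the spherical shell.
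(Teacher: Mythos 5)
Your proposal is correct and follows exactly the paper's route: the paper derives Corollary~\ref{corollario} as a direct consequence of Theorem~\ref{appl1} combined with Corollary~\ref{cKerrlc}, which is precisely your argument. The details you supply (compactness of $\bar B_s(p,r)\cap\bar D^a_{\eps,\ro}$ from the compactness of $\bar D^a_{\eps,\ro}$ for finite $\ro$, strict containment of the inner boundary outside the ergosurface so that the Fermat metric is well defined there, and non-contractibility of the spherical shell for the multiplicity statement) are exactly the verifications the paper leaves implicit.
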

Existence and multiplicity results as the ones in
Corollary~\ref{corollario}  were already known for a region of the
type $M^a_{\epsilon,\infty}$ \cite{fgm,mas}.   They can be also
obtained  for some shells strictly contained in  the domain of
outer communication   and  intersecting the ergosphere, in a slow or
extreme Kerr-Newman spacetime, \cite{hp}.
\begin{remark}\label{referee2rem2}
For the physical applications of the results along all
Section \ref{s4}, the following observations are in order. First,
when the spacetime $L$ is globally hyperbolic, the technical
assumption that the intrinsic symmetrized closed balls $\bar
B_s^{\bar D}(p,r)$ are compact, is automatically satisfied for any
domain $D$   (recall Theorem~\ref{tscr} (2) and Remark~\ref{rapostilla})
Whenever $L$ models a physical isolated body with no  black holes,
$L$ is assumed to be globally hyperbolic and  geodesically asymptotically flat  and, thus, our results are applicable
---i.e.,  the existence of connecting geodesics inside large balls is
assured, and criteria to estimate their minimum  radius are also
provided. In the case that the spacetime  contains   a black hole,
$L$ would represent only its stationary part and the applicability
of the results to large balls depends also on the (time, light)
convexity of the stationary limit hypersurface $\mathcal{N}$,
where the causal character of $Y$ changes. If $L$   was   all the
domain of outer communication\footnote{We use standard terminology
as in \cite{Chru}.} (as happens in Schwarzschild spacetime)  $L$
is expected to be globally hyperbolic and all the results would be
applicable again. Otherwise (as happens in slow Kerr, where
$\mathcal{N}$ is the limit of the ergosphere),  the results may be
still applicable in some cases, as shown in Corollary
\ref{corollario}. And, at any case, the domain of outer
communication would include $L$ and would have as a boundary some
Killing horizon $H$ (by Hawking's theorem, under some technical
hypotheses, see \cite[Sect. 3.1.1]{Chru} and references therein).
$H$ would play the role of a light (and time) convex boundary, as
it is a degenerate hypersurface foliated by lightlike geodesics.
Therefore, the results on connecting geodesics are expected to
hold (even though such geodesics might eventually cross the
ergosphere for $Y=\partial_t$,   as in \cite{hp}).
\end{remark}

\subsection{Timelike geodesics}\label{s4.2} A further application  of
 Theorem \ref{main},
concerns timelike geodesics in standard stationary spacetimes.

Consistently
with the notations of Subsection~\ref{timeconvexity}, let
$d^{F_\beta}$ be the generalized distance
 on $\R_u\times S$ determined by $F_\beta=\sqrt{h_\beta}+\omega_1$  and $d_s^{F_\beta}$ be the
 corresponding symmetrized distance. For  each $(u_0,p_0)\in \R_u\times S$, let $\bar B_s((u_0, p_0),
r)$ denote the closure of the $r$-ball in $\R_u\times S$ with
respect  $d_s^{F_\beta}$. As before, $\bar B_s(p_0, r)$ denote the
closure of the $r$-ball in $S$  with respect to the  symmetrized
distance $d_s^{F}$ associated to the generalized distance $d^{F}$
on $S$ (given as $F= \sqrt{h} + \omega$),  and   $\bar B_s^{\bar
D}(p,r)$ denotes the intrinsic $r$-ball in $\bar D$ obtained from
the intrinsic distance   $d_s^{\bar D}$ (Remark \ref{rapostilla}).

Our aim is to prove the following result:

\begin{teo}  \label{appl2}
Let $L=S\times\R$ be a  $C^2$ spacetime
endowed with a $C^{1,1}_{\rm loc}$  standard stationary  Lorentzian metric $g_L$ and let
$D$ be a  $C^2$
domain of   $S$.   Assume that
all the intrinsic closed symmetrized balls $\bar B_s^{\bar
D}(p,r)$, $p\in D$ are compact (which happens, in particular,  if
all $\bar B_s(p,r)  \cap\bar D$ are compact). Then:

\begin{itemize}
 \item[(A)]   $(\R_u\times \partial D; F_{\beta})$ is convex (infinitesimally or, equivalently, locally, \cite[Theorem 1.1]{bcgs})
 if and only if for any $  \ell   > 0$, any point $w  = ( p, t_p )
\in D \times \R$ and any  line  $l_q(\tau) = (q,  \tau) \in D
\times \R$, $\tau \in \R$,
can be
joined by a
future--pointing timelike geodesic   $z (s) = ( x (s), t (s)) \in
D \times \R$, $s\in I=[0,1]$, with Lorentzian length $ \ell  $ such
that $x$
minimizes the arrival time $t(1)$ among all the future-pointing
causal curves from $p$ to $q$ of the same length $ \ell $.

\item[(B)]  If $(\R_u\times\partial D; F_\beta)$ is convex   and
$D$ is not contractible, then a sequence  of future--pointing
timelike geodesics $z_n(s) = ( x_n (s),  t_n (s))$,  $s\in
I=[0,1]$, having  Lorentzian length $\ell  $,  joining $w$ and
$l(\R)$,  having support in $D\times\R$ and   diverging arrival
times \eqref{ar1} exists.
\end{itemize}
\end{teo}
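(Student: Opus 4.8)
The plan is to reduce the entire statement to the lightlike result of Theorem~\ref{appl1} applied to the auxiliary product spacetime $(L_1=\R_u\times L,g_{L_1})$ of \eqref{enne}, viewed as a standard stationary spacetime with spacelike slice $S_1=\R_u\times S$, Killing field $\partial_t$, and Fermat metric $F_\beta$ of \eqref{ran1}. The relevant domain there is $D_1:=\R_u\times D$, whose boundary is $\R_u\times\partial D$ and which is non-contractible exactly when $D$ is (since $\R_u$ is contractible). The dictionary is the observation recalled in Subsection~\ref{timeconvexity}: a future-pointing lightlike geodesic of $L_1$ parametrized so that $\dot u\equiv\ell$ projects to a future-pointing timelike geodesic $z=(x,t)$ of $L$ with $g_L(\dot z,\dot z)=-\ell^2$, i.e. of Lorentzian length $\ell$ on $[0,1]$; conversely, lifting a length-$\ell$ future-pointing timelike $z$ by $u(s)=\ell s$ produces a future-pointing lightlike geodesic of $L_1$ from $(0,p,t_p)$ to the line $\{(\ell,q,\tau):\tau\in\R\}$. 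Under this correspondence the $L_1$-arrival time equals the $L$-arrival time $t(1)$, by matching \eqref{ar} for $L_1$ with the length formula \eqref{ar1}.

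First I would verify the hypothesis required to invoke Theorem~\ref{appl1} on $(L_1,D_1)$, namely compactness of the intrinsic symmetrized balls $\bar B_s^{\bar D_1}\big((u_0,p_0),r\big)$ of $(\R_u\times S,F_\beta)$; this is the content of Lemma~\ref{lcompact} and, in my view, the main obstacle. The key point is that the projection $\Pi_S$ is distance-nonincreasing, because $F(\dot x)=\sqrt{h(\dot x,\dot x)}+\omega(\dot x)\le \sqrt{h(\dot x,\dot x)+\dot u^2/\beta}+\omega(\dot x)=F_\beta(\dot\gamma)$; hence the $S$-projection of any $F_\beta$-short curve issuing from $(u_0,p_0)$ has controlled forward and backward $F$-length and stays inside a fixed compact ball $\bar B_s^{\bar D}(p_0,R)\subset\bar D$. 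On that compact set $\beta\ge\beta_{\min}>0$ and $\|\omega\|$ is bounded away from $1$, so $\int|\dot u|\,\de s\le\sqrt{\beta_{\max}}\int\sqrt{\dot u^2/\beta}\,\de s$ is itself bounded by a multiple of $\ell_{F_\beta}(\gamma)$; thus the $u$-extent is controlled and the closed ball, sitting inside a compact product, is compact. Here the intrinsic (rather than ambient) formulation is essential, and legitimate thanks to Remark~\ref{rapostilla}, which lets Theorem~\ref{main} and Theorem~\ref{appl1} run under intrinsic-ball compactness.

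With this in hand, part~(A) follows by applying Theorem~\ref{appl1} to $(L_1,D_1)$: convexity of $(\R_u\times\partial D;F_\beta)$ is equivalent to its item~(5), i.e. to joining every point and every $\partial_t$-line of $D_1\times\R$ by an arrival-time minimizing future-pointing lightlike geodesic. Specializing the point to $(0,p,t_p)$ and the line to $\{(\ell,q,\tau)\}$ and projecting gives, through the dictionary above, a future-pointing timelike geodesic of length $\ell$ from $(p,t_p)$ to $l_q$ minimizing $t(1)$; since $\ell=u_q-u_p$ and $g_{L_1}$ is invariant under $u$-translations and reflections, all $\ell>0$ and all $p,q$ are reached, proving ``convex $\Rightarrow$ timelike connectability''. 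For the converse the only delicate point is that timelike connectability yields item~(5) only for pairs with distinct $u$-coordinates; the missing diagonal pairs $(u_0,p),(u_0,q)$ are recovered as the limit $\ell\downarrow 0$ of the minimizing geodesics joining $(u_0-\ell,p)$ to $\{(u_0,q,\tau)\}$, extracting a convergent subsequence by the compactness just established and preserving minimality by continuity of $d_s^{F_\beta}$, which completes item~(5) and hence the convexity.

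Finally, part~(B) is the multiplicity statement. When $(\R_u\times\partial D;F_\beta)$ is convex and $D$, hence $D_1$, is non-contractible, the last assertion of Theorem~\ref{appl1} applied to $(L_1,D_1)$ produces, for the fixed pair $(0,p,t_p)$ and $\{(\ell,q,\tau)\}$, a sequence of future-pointing lightlike geodesics $\hat z_n\subset D_1\times\R$ with diverging arrival times. Each $\hat z_n$, being an $L_1$-geodesic running from $u=0$ to $u=\ell$ over $[0,1]$, satisfies $\ddot u=0$ and therefore $\dot u\equiv\ell$, so it projects to a future-pointing timelike geodesic $z_n\subset D\times\R$ of Lorentzian length $\ell$ joining $(p,t_p)$ and $l_q$; their pairwise distinctness and the divergence $T(z_n)=T_{L_1}(\hat z_n)\to\infty$ are inherited from the $\hat z_n$, which is exactly the asserted conclusion.
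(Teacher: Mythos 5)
Your overall architecture is the paper's own: reduce to the product spacetime $(L_1,g_{L_1})$ of \eqref{enne}, transfer the compactness hypothesis through Lemma~\ref{lcompact} (your sketch of that lemma is essentially the paper's proof of it), and then invoke the lightlike machinery --- you route through Theorem~\ref{appl1} applied to $(L_1,\R_u\times D)$, while the paper strings together Proposition~\ref{fp} and Theorem~\ref{main} directly, which is the same content, since the equivalence (3)$\Leftrightarrow$(5) of Theorem~\ref{appl1} \emph{is} Theorem~\ref{main} plus Proposition~\ref{fp}. Part (B) and the implication ``convexity $\Rightarrow$ timelike connectability'' in (A) are sound as you state them.

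The converse implication in (A), however, contains a genuine gap. You rightly observe that timelike connectability yields item (5) of Theorem~\ref{appl1} for $(L_1,\R_u\times D)$ only for pairs with distinct $u$-coordinates (a subtlety the paper's own proof passes over silently when it identifies connectability of the pairs $(0,p)$, $(\ell,q)$ with the convexity of $\R_u\times D$), but your limit argument does not repair it. A limit of minimizing geodesics contained in the \emph{open} set $\R_u\times D$ is only guaranteed to be a minimizer contained in $\R_u\times\bar D$, and it may touch $\R_u\times\partial D$; this boundary-touching degeneration is exactly what convexity is meant to exclude, so it cannot be ruled out before convexity is established --- the argument is circular. (Concretely, if $(\partial D;F)$ failed to be convex at a point lying between $p$ and $q$, the minimizers from $(u_0-\ell,p)$ to $(u_0,q)$ would converge to a curve touching the boundary, and no minimizing geodesic between the diagonal pair would lie inside the open domain; so item (5) is never obtained this way, and (5)$\Rightarrow$(3) cannot be invoked.) The correct repair perturbs the \emph{direction}, not the endpoints, and runs contrapositively: if $(\R_u\times\partial D;F_\beta)$ is not infinitesimally convex, then $(H_{\phi_1})_{(v,y)}\big((v,y),(v,y)\big)>0$ at some boundary point for some tangent direction $(v,y)$; since the fundamental tensor is $C^{1,1}_{\rm loc}$, this Hessian is continuous on $TM\setminus 0$, so one may assume $v\neq 0$. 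The $F_\beta$-geodesic with this initial condition is tangent to the boundary, lies in $\R_u\times D$ for small $|s|\neq 0$ (second-order Taylor expansion of $\phi_1$ along it, via \eqref{evH}), and has strictly monotone $u$-component near $s=0$; for small $\epsilon$ its endpoints therefore form, after a $u$-translation, one of your ``special'' pairs, and they are joined by a unique local minimizer which touches the boundary --- hence by no minimizing geodesic contained in $\R_u\times D$, contradicting timelike connectability. This is the local argument of \cite{bcgs} plus the observation that the bad direction can be tilted off the horizontal; with this substitution your proof closes.
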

Before proving this theorem, the following comments are in order.

\begin{remark}\label{rappl2}
(1) From the technical viewpoint, it is worth pointing out that
the compactness of the subsets $\bar B_s(\bar p_0, r_0)\cap \bar
D, p_0\in D, r_0>0$, does not imply the compactness of the
analogous subsets $\bar B_s((u_0,p_0),  r  )\cap (\R_u\times\bar
D)$ in $\R_u\times S$ (even in the case when $\omega=0$), see
Example \ref{noncompact} below. Nevertheless, if we consider
instead the intrinsic balls, the analogous property will hold
(Lemma \ref{lcompact} below). Because of this reason, the
improvement in Remark \ref{rapostilla} of Theorem \ref{main}
becomes important here.

\smallskip

(2) The idea of the proof will be to reduce the problem to the
lightlike case, by using the metric $g_{L_1}$ explained in
Subsection \ref{timeconvexity} and taking into account the
previous point (1). So, the assertions in (A) can be
also refined taking into
account Theorem \ref{appl1}. In particular,
  analogous statements hold for past--pointing timelike
geodesics and    the hypothesis that  $(\R_u\times \partial
D; F_{\beta})$ is convex in (A) can be replaced by any of the
following alternatives:
\begin{enumerate}
\item[(i)] $( \R_u\times D ; F_\beta)$ is
 convex  (recall Theorem~\ref{main});
\item[(ii)] the boundary of $(\R_u\times D\times\R,g_{L_1})$ is
light-convex (infinitesimally or, equivalently, locally, Corollary~\ref{lightequiv});
 \item [(iii)] $(\partial D\times\R; g_{L})$  is time-convex  (infinitesimally or, equivalently, locally, recall Remark~\ref{timeequiv});
\item[(iv)] $(\R_u\times D\times\R,g_{L_1})$ is causally simple  (Theorem~\ref{appl1}, applied to the standard stationary spacetime $(\R_u\times D\times\R,g_{L_1})$).
\end{enumerate}
Moreover,   the  convexity of $(\R_u\times
\partial D; F_{\beta})$ is directly computable from Prop.
\ref{Rbetaconvexity2}.

\smallskip

(3) As a consequence, the causal simplicity of $(\R_u\times
D\times\R,g_{L_1})$ implies the causal simplicity of
$(D\times\R,g_{L})$;   notice that   the converse does not hold. Such a
property is general  for any spacetime $(L,g)$ (easily, if $(\R_u
\times L, g_1=du^2+g)$ is causally simple then so is $(L,g)$, see
\cite[Th. 3.6]{minguzzi} for a more general result). In the
stationary case, this is parallel to the fact that the
time-convexity for a boundary implies light-convexity, but the
converse does not hold.

\end{remark}

\begin{example}\label{noncompact}
Consider the standard static spacetime with $S=\R$ and
$\beta=e^{x^2}$, i.e., $L=\R^2, g_L=-e^{x^2}dt^2+dx^2$ so that the
Fermat metric is $F=\sqrt{h}$ with $h=dx^2/e^{x^2}$ and the balls
for $F$ and $h$ agree. Now,
$$\begin{array}{c} (L_1,g_{L_1})=(\R_u\times \R^2, g_{L_1}=du^2+dx^2-e^{x^2}dt^2),\\
\R_u\times S=\R^2, \quad h_1=e^{-x^2}(du^2+dx^2), \quad
F_\beta=\sqrt{h_1}.
\end{array}$$
Notice that $(\R^2,h_1)$ has a finite diameter $R_0(<\infty)$
(say, $d^{h_1}\left((0,0),(u_M,0)\right)<
 1+  2\int_0^\infty
e^{-x^2/2}dx$ for all  $u_M\in\R$, as one can take curves starting
at $(0,0)$ which go far along the $x$ axis, then move straight in
the $u$-direction  until reaching $u=u_M$, and finally come along
the $x$-direction to $(u_M,0)$). In particular, the
$d_s^{F_\beta}$-ball of radius $R_0$ is non-compact.

Now, take $D=(-1,1)\subset \R$. As $\bar D$ is compact,  the
intersection of the closed $d^{h}$-balls with $\bar D$ are
compact, that is, {\em the subsets $\bar B_s(p_0,r_1)\cap \bar D$,
$p_0\in D$, $r_1>0$ are always compact}. However, {\em the
analogous subsets $\bar B_s((u_1,p_1), r_1)\cap (\R_u\times\bar
D)$ in $\R_u\times S$ are not compact} for $r_1\geq R_0$, as they
include all the region $\R_u\times \bar D$. Nevertheless, the
intrinsic balls $\bar B_s^{\R_u\times \bar D}((u_1,p_1), r_1)$ are
compact (in agreement with the next lemma), as $\beta$ is bounded
on $\bar D$.
\end{example}

\begin{lem}\label{lcompact} Assume that the intrinsic balls
$\bar B_s^{\bar
D}(p_0,r_0)$, $p_0\in D$, $ r_0>0 $ are compact subsets in $\bar D$.
Then, the intrinsic balls $\bar B_s^{\R_u\times\bar D}((u_1,p_1),
r_1)$ are also compact subsets in $\R_u\times \bar D$.
\end{lem}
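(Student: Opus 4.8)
The plan is to realize each closed symmetrized ball $\bar B_s^{\R_u\times\bar D}((u_1,p_1),r_1)$ as a closed subset of an explicit compact subset of $\R_u\times\bar D$; since symmetrized closed balls are always closed for the manifold topology, this gives compactness immediately. I would write a generic (piecewise smooth) curve in $\R_u\times\bar D$ as $\sigma(s)=(u(s),x(s))$, with base projection $x=\Pi_S\circ\sigma$, and start from the elementary pointwise inequality
\[
F_\beta(\dot\sigma)=\sqrt{h(\dot x,\dot x)+\tfrac{\dot u^2}{\beta(x)}}+\omega(\dot x)\ \geq\ \sqrt{h(\dot x,\dot x)}+\omega(\dot x)=F(\dot x).
\]
This shows that $\Pi_S$ does not increase $F_\beta$-length on any subarc, so passing to infima over curves contained in $\bar D$ yields $d^{\bar D}(p,p')\leq d^{\R_u\times\bar D}((u,p),(u',p'))$, and likewise for the symmetrized distances. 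In particular, the base component of any point of the ball already lies in the compact set $\bar B_s^{\bar D}(p_1,r_1)$.

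The substantial point, and the step I expect to be the main obstacle, is to bound the $u$-component. For this I must control $\beta$ and $\|\omega\|_x$ \emph{all along} an almost-minimizing curve, not merely at its endpoints, and here the difficulty is that forward balls of $d^{\bar D}$ need not be compact --- precisely the pathology isolated in Example~\ref{noncompact}. Thus the forward estimate $d^{\bar D}(p_1,x(s))\leq R$ coming from the initial arc does not by itself trap the intermediate points in a compact set. I would fix $(u_2,p_2)$ in the ball, choose $\sigma$ from $(u_1,p_1)$ to $(u_2,p_2)$ with $\ell_{F_\beta}(\sigma)\leq R:=2r_1+1$, and combine three bounds: the forward length of the initial and final arcs give $d^{\bar D}(p_1,x(s))\leq R$ and $d^{\bar D}(x(s),p_2)\leq R$, while the projection inequality together with membership of $(u_2,p_2)$ in the symmetrized $r_1$-ball gives $d^{\bar D}(p_2,p_1)\leq 2r_1$. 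A triangle inequality then yields $d^{\bar D}(x(s),p_1)\leq 2R$, hence $d_s^{\bar D}(p_1,x(s))\leq\tfrac32 R$, so the entire support of $x$ is confined to the \emph{symmetrized} ball $K:=\bar B_s^{\bar D}(p_1,\tfrac32 R)$, which is compact by hypothesis.

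Once the support is trapped in $K$, the remaining estimate is routine: on $K$ one has $\beta\leq\beta_{\max}<\infty$ and, by compactness together with $\|\omega\|_x<1$ everywhere, $\|\omega\|_x\leq\Lambda<1$. Using $\omega(\dot x)\geq-\Lambda\sqrt{h(\dot x,\dot x)}\geq-\Lambda\sqrt{h_\beta(\dot\sigma,\dot\sigma)}$ one gets $F_\beta(\dot\sigma)\geq(1-\Lambda)\sqrt{h_\beta(\dot\sigma,\dot\sigma)}$, whence
\[
|u_2-u_1|\leq\int_0^1|\dot u|\,\de s\leq\sqrt{\beta_{\max}}\int_0^1\sqrt{h_\beta(\dot\sigma,\dot\sigma)}\,\de s\leq\frac{\sqrt{\beta_{\max}}}{1-\Lambda}\,\ell_{F_\beta}(\sigma)=:A,
\]
where I used $|\dot u|\leq\sqrt{\beta_{\max}}\,|\dot u|/\sqrt{\beta(x)}\leq\sqrt{\beta_{\max}}\,\sqrt{h_\beta(\dot\sigma,\dot\sigma)}$. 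This shows $\bar B_s^{\R_u\times\bar D}((u_1,p_1),r_1)\subset[u_1-A,u_1+A]\times K$, a compact subset of $\R_u\times\bar D$; being closed and contained in a compact set, the ball is itself compact, as desired.
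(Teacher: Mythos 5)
Your proof is correct and follows essentially the same strategy as the paper's: first trap the base projection $x(s)$ of an almost-minimizing curve inside a compact intrinsic symmetrized ball of $\bar D$ (the paper gets radius $3r_1$ by concatenating with an explicit return curve $\gamma^2$, you get radius $\tfrac32 R$ by projecting the return distance instead), and then bound the $u$-drift by integrating $|\dot u|\leq \sqrt{\beta_{\max}}\sqrt{h_\beta(\dot\sigma,\dot\sigma)}$ against a uniform comparison between $\sqrt{h_\beta}$ and $F_\beta$ on that compact set. The only cosmetic difference is in this last comparison: the paper bounds $|\omega(\dot x)|\leq C F_\beta(\dot\gamma^1)$ and writes $\sqrt{h_\beta}=F_\beta-\omega(\dot x)$, while you use $\|\omega\|\leq\Lambda<1$ to get $F_\beta\geq(1-\Lambda)\sqrt{h_\beta}$; the two estimates are equivalent.
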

\begin{proof}
 It is enough to check that $\bar B_s^{\R_u\times\bar
D}((u_1,p_1), r_1)$ lies in a compact product subset of
$\R_u\times \bar D$.  Let $(u_2,p_2)$ be a point in $\bar
B_s^{\R_u\times\bar D}((u_1,p_1), r_1)$ and let $\gamma^i$,
$i=1,2$, be two curves $\gamma^i\colon[0,1]\to \R_u\times  \bar
D$, $\gamma^i(s)=(u^i(s), x^i(s))$,  $\gamma^1$ from $(u_1,p_1)$
to $(u_2,p_2)$  and  $\gamma^2$ from  $(u_2,p_2)$ to $(u_1,p_1)$,
such that $\int_0^1F_\beta(\dot \gamma^i)\de s< 2r_1 $. Hence, for
each $s\in [0,1]$, we have $d^{\bar D}(p_1,x^1(s))<2r_1$ and
$d^{\bar D}(x^1(s),p_1)<  4r_1 $ (for the latter inequality, it is
enough to consider the arc of $x^1$ from $x^1(s)$ to $p_2$ and
then the curve $x^2$ from $p_2$ to $p_1$). Thus $d^{\bar D}_s(p_1,
x^1(s))< 3  r_1$  for all $s\in [0,1]$. As $\bar B^{\bar D}_s(p_1,
3r_1 )$ is compact, there exist positive constants $A, B, C$,
independent of the curve $x^1$, such that $\beta(x^1(s))\leq A^2$
and $|\omega(\dot x^1(s))|\leq B |\dot x^1(s)|_h\leq C F(\dot
x^1(s))\leq C F_\beta(\dot \gamma^1(s))$, for all $s\in [0,1]$.
Thus, we have \baln
|u_2-u_1|&\leq \int_0^1|\dot u^1|\de s\leq A\int_0^1 \left(\frac{(\dot u^1)^2}{\beta}+
h(\dot x^1, \dot x^1)\right)^{1/2}\de s\\
&\quad  =    A\left(\int_0^1 F_\beta(\dot\gamma^1)\de s-\int_0^1
\omega(\dot x^1)\de s\right)\leq 2 Ar_1(1+C)  =: K,  \ealn hence
the closed ball  $\bar B_s^{\R_u\times\bar D}((u_1,p_1), r_1)$ is
contained in the compact subset $[u_1-K, u_1+K]\times \bar
B_s^{\bar D}(p_1,  3 r_1)$ of $\R_u\times S$.
\end{proof}

\begin{proof}[Proof of Theorem~\ref{appl2}.]
As described  in Subsection~\ref{timeconvexity},   the  existence of
future--pointing timelike ge\-o\-des\-ics  having Lorentzian
length $\ell $ and connecting the point $(p,t_p)$ to the line
$l_q(\tau)=(q,\tau)$ in the region $D\times \R$ contained in
$(L,g_L)$, is equivalent to the existence  of future--pointing
lightlike geodesics connecting the point $(0,p,t_p)$ and the line
$\tilde l_q(\tau)=( \ell ,q,\tau)$ in the region $\R_u\times D\times
\R$ contained  in  $(L_1 , g_{L_1})$ (see \eqref{enne}). In turn,
the latter is equivalent, by Proposition~\ref{fp}, to the
existence of geodesics, with respect to the Randers metric
$F_\beta$, connecting the points $(0,p)$ and $( \ell ,q)$ in
$\R_u\times D$ and then,  from Theorem~\ref{main},
Remark~\ref{rapostilla} and Lemma~\ref{lcompact},  it  is equivalent to the convexity of
$(\R_u\times  \partial D;F_\beta)$.
\end{proof}

\begin{remark}\label{referee2rem3}
The applicability of Theorem \ref{appl2} is determined by
Proposition \ref{Rbetaconvexity2} (and, in particular, Corollary
\ref{betabehaviour}).  For the case of the stationary region $M^a$
of Kerr spacetime, Theorem \ref{appl2} is applicable by choosing
$D$ equal to the domain $D^a_\eps$ in \eqref{eda} for  any $\eps\in (0,m)$ and any small enough 
$|a|\geq 0$  (as $\partial M^a_\eps$ is then
time-convex,  as discussed above Corollary \ref{cKerrlc})  but not to the domain
$D^a_{\eps,\ro}$ in Corollary \ref{cKerrlc} ($M^a_{\eps,\ro}$ is
only light-convex).

The violation of time-convexity (physically interpreted in Remark
\ref{fall}) can be analyzed by considering connecting timelike
geodesics of prescribed length $|v|>0$ between some fixed $p$ and
$l_q$. Recall that the second and third terms in the right hand
side of \eqref{tcon} goes to 0 for large $\ro$, while the first
one remains of the order $-2|y|^2$, according to
\eqref{ehessasympInequality}. So, a geodesic $\gamma$ which
violates time-convexity (in the sense that remains in the closure
of $ M^a_{\eps,\roo}$ but touches  the component  $ S^2(\roo) \times\R$ of its boundary) will
have small
 $|y|$-component.
This corresponds with the component along
 $T S^2(\roo) $ of $\dot\gamma$
and it is  related  to the angular momentum
of $\gamma$. Geodesics with $|y|/|v|$ greater than some  positive constant
will not violate time-convexity for sufficiently large
radius $\ro$.

\end{remark}

\begin{remark}\label{referee2rem4}  Notice also that, in
cosmological models, the metric is typically {\em globally
conformal} to a stationary one (actually, a static one)  and, thus, the techniques are
applicable to this case. For example, in a   FLRW model, the
spacetime is written as a warped product $(I\times S,
g=-dt^2+f(t)^2\pi_S^*g_S)$ where $I\subset \R$ is an interval,
$\pi_S: I\times    S  \rightarrow S, t: I\times   S   \rightarrow I$ are
the natural projection and $(S,g_S)$ is Riemannian manifold. The
conformal metric $g/f^2$ can be written as a product spacetime
$J\times S$, where $J\subset \R$ is another interval determined by
the change of variable $ds=dt/f(t)$. To obtain results on
connecting lightlike geodesics, it is enough to apply the former
ones to the latter product, taking into account that the
$s$-interval covered by the geodesic must be included in $J$. \em
\end{remark}

\subsection{Appendix: revision of the techniques from the SRC
viewpoint}\label{s4.3}

At the beginning of Subsection \ref{s4.1}, we explained how the
result of existence of  at least one  lightlike connecting
geodesic in manifolds without boundary in \cite{cym} could  be
obtained from purely causal grounds. Next, we will consider the
direct implications of causality for manifolds with boundary  on the existence of connecting causal geodesics. 
We will focus on the  case of timelike
geodesics, as for lightlike ones, the question is simpler (recall
 Remark~\ref{rrr} (2)). We will see how direct techniques of
causality plus SRC allow to prove Proposition \ref{propchuleria}
below. Then, we will discuss  the different techniques and
results.

Let us start with preliminary results on causality.  Recall that,
as far as causality is only involved, $C^1$ regularity for the
ambient manifold and  the  domain $D$, and $C^0$ for the
Lorentzian metric will be enough.
\begin{lem}\label{lemmin} A product  spacetime
$(\R_u\times M, g_1=du^2+g_L)$ is causally
simple iff
\begin{enumerate}
\item[(i)] $(M,g_L)$  is causally simple, \item[(ii)]  the
time-separation $d_L$ for $(M,g_L)$ is continuous between points
$w,w'\in M$ with $d_L(w,w')<\infty$  (here,
$d_L(w,w')=\sup_{z\in\mathcal C(w,w';M)} \ell_{g_L}(z)$, where
$\mathcal C(w,w';M)$ is the set of the future-pointing causal
curves $z$ from $w$ to $w'$, and
$\ell_{g_L}(z)=\int_z\sqrt{-g_L(\dot z,\dot z)}$ is the Lorentzian
length),
\item[ (iii)] if $w\leq w'$ and $d_L(w,w')<\infty$
then there exists a future--pointing causal geodesic $\sigma$ from
$w$ to $w'$ with length equal to $d_L(w,w')$.
\end{enumerate}
\end{lem}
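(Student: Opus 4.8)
The plan is to reduce the whole statement to a single description of the causal relation of the product in terms of the data on $(M,g_L)$. Fixing on $\R_u\times M$ the time-orientation induced by that of $M$ (the extra factor being spacelike, since $g_1\big((\tau,v),(\tau,v)\big)=\tau^2+g_L(v,v)$), a future-pointing causal curve $s\mapsto (u(s),z(s))$ satisfies $\dot u^2\le -g_L(\dot z,\dot z)$, so its projection $z$ is a future-pointing causal curve in $M$ and $|u(b)-u(a)|\le\int\sqrt{-g_L(\dot z,\dot z)}\,\de s=\ell_{g_L}(z)$; conversely any causal $z$ in $M$ can be lifted with $|\dot u|\le\sqrt{-g_L(\dot z,\dot z)}$ so as to realize any prescribed $u$-increment of absolute value $\le\ell_{g_L}(z)$. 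Taking suprema over connecting causal curves, this yields the characterization
\[(u_1,w_1)\le_1(u_2,w_2)\iff w_1\le w_2 \text{ in } M \text{ and } |u_2-u_1|\le d_L(w_1,w_2).\]
Here, if $|u_2-u_1|<d_L(w_1,w_2)$ one even gets a \emph{timelike} connection with no extra hypothesis (the \emph{strict case}); the \emph{boundary case} $|u_2-u_1|=d_L(w_1,w_2)$ produces a causal connection if and only if the supremum defining $d_L(w_1,w_2)$ is attained by a causal curve, i.e.\ exactly when (iii) holds.

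For the implication $(\Leftarrow)$, I would first note that causality of the product is equivalent to that of $M$ (a closed causal curve upstairs projects to one downstairs, unless it is $u$-vertical, which is impossible), so (i) gives causality. For closedness of $J^+_1\big((u_0,w_0)\big)$, take $(u_n,w_n)\to(u_\infty,w_\infty)$ inside it; by the characterization $w_0\le w_n$ and $|u_n-u_0|\le d_L(w_0,w_n)$. Since $J^+(w_0)$ is closed by (i), $w_0\le w_\infty$. If $d_L(w_0,w_\infty)=\infty$ then $|u_\infty-u_0|<\infty=d_L$ and the strict case returns the point to $J^+_1$; if $d_L(w_0,w_\infty)<\infty$, then (ii) gives $d_L(w_0,w_n)\to d_L(w_0,w_\infty)$, hence $|u_\infty-u_0|\le d_L(w_0,w_\infty)$, and either the strict case (if the inequality is strict) or (iii) via the boundary case (if it is an equality) returns the point to $J^+_1$. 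The same argument, time-reversed (all three hypotheses being time-symmetric), closes $J^-_1$, so the product is causally simple.

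For $(\Rightarrow)$ I would assume the product causally simple. Causality downstairs is immediate, and $J^+(w_0)$ is closed because $w_n\in J^+(w_0)\to w_\infty$ lifts by the constant-$u$ lift to $(0,w_n)\in J^+_1\big((0,w_0)\big)$, and closedness of $J^+_1$ projects back to $w_0\le w_\infty$; this gives (i). For (iii) with $D:=d_L(w_0,w')\in(0,\infty)$, the points $(u,w')$ with $0\le u<D$ lie in $I^+_1\big((0,w_0)\big)$ by the strict case; letting $u\nearrow D$ and using closedness of $J^+_1\big((0,w_0)\big)$, one gets $(D,w')\in J^+_1\big((0,w_0)\big)$, so there is a connecting causal curve whose $M$-projection $z$ has $\ell_{g_L}(z)\ge D$, forcing $\ell_{g_L}(z)=d_L(w_0,w')$; a length-maximizing causal curve is a reparametrized geodesic, giving (iii). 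The degenerate subcase $D=0$ is handled separately: then $w'\in J^+(w_0)\setminus I^+(w_0)$ and, $M$ being causally simple hence strongly causal, any causal curve from $w_0$ to $w'$ is automatically a null geodesic and trivially maximizing. Finally, for (ii), since $d_L$ is always lower semicontinuous it suffices to prove upper semicontinuity on finite pairs; using that causal simplicity of the product is equivalent to closedness of its causal relation $\le_1$ as a subset of $(\R_u\times M)^2$ (see \cite{ms, minguzzi}), if $(w_n,w_n')\to(w,w')$ with $d_L(w,w')=D<\infty$ but $d_L(w_n,w_n')>D+\eps$ along a subsequence, then $(0,w_n)\le_1(D+\eps,w_n')$ by the strict case; passing to the limit in the closed relation gives $(0,w)\le_1(D+\eps,w')$, whence $D+\eps\le d_L(w,w')=D$, a contradiction, so $\limsup d_L(w_n,w_n')\le d_L(w,w')$ and (ii) follows.

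I expect the main obstacle to be the boundary case $|u_2-u_1|=d_L(w_1,w_2)$ of the characterization, since it is precisely there that the \emph{attainment} of the supremum defining the time separation — the existence of a maximizing causal geodesic — becomes the bridge linking closedness of $J_1$ to hypothesis (iii). Equally delicate will be the proof of (ii) in the $(\Rightarrow)$ direction, which genuinely needs closedness of the whole causal relation of the product (not merely of the cones $J^\pm$ of individual points), together with the degenerate $d_L=0$ analysis in (iii), where one must fall back on the horismos structure of strongly causal spacetimes rather than on a length-maximizing argument.
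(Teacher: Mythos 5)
Your proof is correct, but it takes a genuinely different route from the paper: the paper's entire proof is a one-line citation of \cite[Theorem 3.13]{minguzzi}, which treats general products $H\times M$, applied to $H=(\R_u,du^2)$, whereas you reconstruct the statement from scratch via the explicit description of the causal relation of the product, $(u_1,w_1)\le_1(u_2,w_2)$ iff $w_1\le w_2$ and $|u_2-u_1|\le d_L(w_1,w_2)$, with the boundary case $|u_2-u_1|=d_L(w_1,w_2)$ requiring attainment of the supremum. This characterization (projection/lifting of causal curves, with the strict case giving chronological relation) is exactly the mechanism hidden inside Minguzzi's theorem, and your case analysis correctly isolates where each hypothesis enters: (i) for causality and closedness of the projected cones, (ii) to pass the inequality $|u_n-u_0|\le d_L(w_0,w_n)$ to the limit, (iii) to handle the boundary case. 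Your converse direction is also sound, including the two delicate points you flag: the $u\nearrow D$ limiting argument that extracts a maximizer (a causal curve of length equal to the time separation is a reparametrized geodesic, a standard fact), and the proof of (ii), which legitimately needs the equivalence between causal simplicity and closedness of $\le_1$ as a subset of the product squared --- a known result you correctly attribute to \cite{ms,minguzzi}. Two cosmetic remarks: in the degenerate case $d_L(w_0,w')=0$ you invoke strong causality, but the standard push-up property (a causal curve that is not a null pregeodesic can be deformed to a timelike one) already suffices in any spacetime; and the timelike (rather than merely causal) connection in the strict case needs the same deformation argument when the projected curve has null segments, though nothing in your proof actually depends on that refinement. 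What the paper's approach buys is brevity and generality (arbitrary Riemannian fiber $H$); what yours buys is a self-contained argument that makes transparent why the three conditions are exactly what closedness of $J_1^{\pm}$ amounts to.
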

 \begin{proof} This is just a particular case of
\cite[Theorem 3.13]{minguzzi}, which is stated for any product
$H\times M$, (apply it to $H=(\R_u,du^2)$).
\end{proof}
Recall that, in the previous result on connectivity, the
finiteness of  the time separation $d_L$  becomes essential to
ensure both, the existence  of connecting causal geodesics and the
continuity of $d_L$.  As the second statement of the following
result shows, in stationary spacetimes the finiteness of the time
separation $d$ of $D\times \R$ (regarded as a spacetime by itself,
i.e.,  $d(w,w')=\sup_{z\in\mathcal C(w,w';D\times
\R)}\ell_{g_L}(z)$) is ensured by the compactness of the closed
symmetrized balls.

\begin{lem}\label{lemmin2} For any stationary domain
$(D\times \R, g_L)$:
\begin{enumerate}
\item   The time-separation $d$ is unbounded on the stationary
lines, i.e.\ for all $w=(p,t_p)\in D\times \R$ and $q\in D$,
$\lim_{\tau\rightarrow\infty} d(w,l_q(\tau))=\infty$. \item  If
the intrinsic balls $\bar B_s^{\bar D}(p_0,r)$, $p_0\in D$, $r>0$,
are compact subsets in $\bar D$, then the time-separation $ d $ is
finite valued on $D\times \R$.
\end{enumerate}
\end{lem}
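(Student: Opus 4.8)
The plan is to handle the two assertions separately, since they have quite different natures. For part~(1), the strategy is to exhibit, for each fixed $w=(p,t_p)$ and $q\in D$, a family of future--pointing timelike curves from $w$ to $l_q(\tau)$ whose Lorentzian lengths diverge as $\tau\to+\infty$. The most economical construction is to first travel from $p$ to $q$ along some fixed spatial path in $D$ (in finite $t$-time, using a causal curve), reaching a point $(q,t_1)$ on $l_q$, and then simply to flow along the integral line $l_q$ itself from $(q,t_1)$ to $(q,\tau)$. Since $\partial_t=Y$ is timelike with $g_L(Y,Y)=-\beta<0$, the vertical segment contributes Lorentzian length $\int_{t_1}^{\tau}\sqrt{\beta(q)}\,\de t = \sqrt{\beta(q)}\,(\tau-t_1)$, which tends to $+\infty$. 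Concatenating a fixed-length initial arc with this arbitrarily long vertical segment yields future--pointing causal curves of diverging length, so $d(w,l_q(\tau))\to+\infty$. I would only need to check that the concatenation is genuinely future--pointing causal, which follows because $Y$ is future--pointing timelike and the initial arc can be chosen future--pointing.

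For part~(2), I would argue by contradiction, or rather by a direct limit-curve argument. Suppose $d(w,w')=+\infty$ for some $w=(p,t_p)$, $w'=(q,t_q)$ in $D\times\R$. Then there is a sequence of future--pointing causal curves $z_n=(x_n,t_n)$ from $w$ to $w'$ with $\ell_{g_L}(z_n)\to+\infty$. The key is to translate the causal and length conditions into Fermat-metric estimates on the spatial projections $x_n$. Recall from \eqref{dottfuture} that a future--pointing causal curve satisfies $\dot t\geq F(\dot x)$ pointwise (equality in the lightlike case), so that the total $t$-variation controls the $F$-length of $x_n$: integrating gives
\[
\int_a^b F(\dot x_n)\,\de s \leq t_q - t_p,
\]
whence the $F$-length of $x_n$ is bounded by the fixed quantity $t_q-t_p$. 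Applying the same reasoning to the reverse metric, or exploiting that the curves stay causal, one bounds the symmetrized $d^{\bar D}_s$-length of $x_n$ as well, so every $x_n$ has support contained in a fixed intrinsic symmetrized ball $\bar B_s^{\bar D}(p,r)$, which is compact by hypothesis.

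The final step is to derive the contradiction: the $t$-coordinate of any future--pointing causal curve from $w$ to $w'$ is squeezed, $t_p\leq t_n(s)\leq t_q$, so the curves $z_n$ lie in the compact set $\bar B_s^{\bar D}(p,r)\times[t_p,t_q]$. On such a compact region the Lorentzian length of a causal curve is controlled by a fixed multiple of its $g_0$-arclength plus its $t$-variation, both of which are bounded uniformly in $n$ (the $t$-variation is at most $t_q-t_p$, and the spatial arclength is bounded using compactness together with the $F$-length bound above, since $F$ is comparable to the $h$-norm on compact sets). This forces $\sup_n\ell_{g_L}(z_n)<\infty$, contradicting $\ell_{g_L}(z_n)\to+\infty$. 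Hence $d(w,w')<\infty$. I expect the main obstacle to be the bookkeeping in this last estimate: one must carefully bound the Lorentzian length $\int\sqrt{-g_L(\dot z,\dot z)}=\int\sqrt{\beta}\,(\dot t-\omega(\dot x))\,\cdots$ of a causal curve in terms of the controlled quantities, using that $\beta$ and $\|\omega\|$ are bounded on the compact spatial region while $\dot t-\omega(\dot x)$ is governed by the bounded $t$-variation; making this uniform over all causal $z_n$ is the delicate point.
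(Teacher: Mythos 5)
Your proposal is correct and follows essentially the same route as the paper's proof: part (1) by concatenating a future--pointing causal arc from $w$ to the line $l_q$ with an arbitrarily long segment of the integral curve of $\partial_t$, and part (2) by contradiction, confining the spatial projections $x_n$ to a compact intrinsic symmetrized ball via the bound $\int F(\dot x_n)\,\de s\leq t_q-t_p$ (plus the triangle inequality through $q$), and then using the uniform bounds for $\beta$ and $\|\omega\|$ on that compact set to contradict the divergence of the Lorentzian lengths. The only differences are cosmetic: the paper's final estimate parametrizes $z_n$ at constant speed and uses the relation \eqref{ar1} instead of your direct pointwise bound on $\sqrt{-g_L(\dot z_n,\dot z_n)}$ in terms of $\dot t_n$ and the $g_0$-arclength, and your phrase ``applying the same reasoning to the reverse metric'' should be read as the paper's containment in the backward $F$-ball centered at $q$.
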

\begin{proof} (1) Recall that $w$ and $l_q$ can be joined by means
of a future-directed lightlike curve $z(s)=(x(s),t(s)), s\in
[0,1]$ (choose any curve $x$ in $D$  connecting $p$ and $q$, and
choose $t$ so that $z$ becomes lightlike), and that the length of
$l_q$ between $z(1)$  and $l_q(\tau)$ goes to infinity for large
$\tau$.

(2) Assume by contradiction that $d((p,t_p),(q,t_q))=\infty$, and
let $$z_n(s)=(x_n(s),t_n(s)), \quad s\in [0,1]$$ be a sequence of causal
curves in $D$ from $(p,t_p)$ to $(q,t_q)$ with diverging
Lorentzian  lengths. Any point $z_n(s)$ lies in $J^+((p,t_p))\cap
J^-((q,t_q))$ and, so, all the curves $x_n$ lie in the
intersection between the closures of the forward
$F$-ball $B^{\bar D+}(p,t_q-t_p)$ and the backward one $B^{\bar
D-}(q,t_q-t_p)$ (use \cite[Eq. (4.6)]{cys}), and then they are
contained  in $K:=\bar B_s^{\bar D}(p, t_q-t_p+\frac{d^{\bar
D}(q,p)}{2})$ which is  a compact set.
As the Fermat metric $F=\sqrt{h}+\omega$ is positive definite,
there exists some $\eps>0$ and $\eta>0$ such that the $h$-norm of
$\omega$ satisfies $ \|\omega\|_x  <1-\epsilon$ and $\beta(x)\leq \eta^2$,
for all $x\in K$.  Then, as the
arrival times of all the curves $\{z_n\}$ is $t_q$, equation
\eqref{ar1} implies that the $h-$length of all the curves $x_n$ is
bounded.

Then, parameterizing the curves $z_n$ at constant speed gives
$ \ell_n =\!\sqrt{-g_L(\dot z_n, \dot z_n)} = \int_0^1\sqrt{g_L(\dot
z_n(s), \dot z_n(s))}\de s\to \infty$ and, from \eqref{ar1}, we get a
contradiction
\baln \lefteqn{t_q-t_p +(1-\eps)
\int_0^1\sqrt{h(\dot x_n,\dot x_n)}\de s\geq
t_q-t_p  -\int_0^1\omega(\dot x_n)\de s}&\\
&=\int_0^1\sqrt{h(\dot x_n,\dot x_n)+\frac{v_n^2}{\beta(x_n)}}\de s\geq
\int_0^1\sqrt{\frac{ \ell_n^2 }{\beta(x_n)}}\de s\geq \frac{1}{\eta}  \ell_n \to \infty.
\ealn
\end{proof}
As a consequence of the previous two lemmas, we can give the
following relevant particular case of Theorem \ref{appl2} (recall
also Remark \ref{rappl2}(1)) by using strictly causal hypotheses
and proof (including  stationary-to-Randers correspondence, SRC).
\begin{prop}\label{propchuleria} 
Assume that a stationary domain
$(D\times \R, g_L)$ satisfies that $(\R_u\times D\times \R, \,
g_{L_1}=\Pi_1^*du^2 + \Pi^*g_L)$ is causally simple  and that the intrinsic balls $\bar B_s^{\bar
D}(p_0,r)$, $p_0\in D$, $r>0$,
are compact subsets in $\bar D$.  Then any
point $w = (  p, t_p ) \in D\times \R$ and any line $l_q(\tau) =
(q, \tau) \in D\times \R$, $\tau \in \R$,
 can be joined by a future--pointing timelike geodesic $z (s) = (
x (s), t (s)) \in D \times \R$, $s\in I=[0,1]$ with Lorentzian
length $ \ell $, such that $x$  minimizes the arrival time $t(1)$
among all the future-pointing causal curves from $p$ to $q$ of the
same length $ \ell $.
\end{prop}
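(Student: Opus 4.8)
The plan is to obtain the result purely from the two causal lemmas above, bypassing the variational/Finslerian route used for Theorem~\ref{appl2}. First I would apply Lemma~\ref{lemmin} with $M=D\times\R$ (so that the product $\R_u\times M$ carries exactly the metric $g_{L_1}$). Since $(\R_u\times D\times\R,g_{L_1})$ is causally simple by hypothesis, the lemma hands us three facts about $(D\times\R,g_L)$, viewed as a spacetime in its own right: it is causally simple, its time-separation $d$ is continuous on the pairs where it is finite, and every causally related pair with finite separation is joined by a future--pointing causal geodesic of length equal to $d$. It is this maximizing property --- which the causal simplicity of $(D\times\R,g_L)$ \emph{alone} would not yield (that would require global hyperbolicity / Avez--Seifert) --- that the auxiliary factor $\R_u$ is designed to supply, and it is the conceptual core of the argument.

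Next I would feed in Lemma~\ref{lemmin2}. Its part~(2) turns the compactness of the intrinsic balls $\bar B_s^{\bar D}(p_0,r)$ into the finiteness of $d$ on all of $D\times\R$; combined with the continuity granted by Lemma~\ref{lemmin}(ii), the function $\tau\mapsto d(w,l_q(\tau))$ is then continuous on $\R$. It vanishes for $\tau$ so small that $l_q(\tau)\notin J^+(w)$, while part~(1) of Lemma~\ref{lemmin2} forces it to diverge as $\tau\to+\infty$. The intermediate value theorem therefore produces a value $\tau_0$ with $d(w,l_q(\tau_0))=\ell$.

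With $\tau_0$ in hand I would build the geodesic. By the maximizing property (Lemma~\ref{lemmin}(iii)) the events $w$ and $l_q(\tau_0)$ are joined by a causal geodesic $z$ of Lorentzian length $\ell>0$; a causal geodesic of strictly positive length is timelike, and after an affine reparametrization on $[0,1]$ it takes the asserted form $z(s)=(x(s),t(s))$ with support in $D\times\R$. For the minimizing arrival-time claim, I would use that displacement upward along $l_q$ is timelike, so the reverse triangle inequality makes $\tau\mapsto d(w,l_q(\tau))$ strictly increasing wherever $w\leq l_q(\tau)$; consequently, if $z'$ is any future--pointing causal curve from $p$ to $q$ of length $\ell$ reaching $l_q(\tau')$, then $\ell=\ell_{g_L}(z')\leq d(w,l_q(\tau'))$ and strict monotonicity give $\tau'\geq\tau_0$, so $z$ minimizes $t(1)$ as required.

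Given that Lemmas~\ref{lemmin} and~\ref{lemmin2} already do the heavy lifting, I expect the main obstacle to be bookkeeping rather than a new idea: one must check that Lemma~\ref{lemmin} is invoked in the correct direction (so that property~(iii) is the one being used), that it is the compactness of the \emph{intrinsic} balls --- not of $\bar B_s(p,r)\cap\bar D$ --- that is needed for Lemma~\ref{lemmin2}(2) and hence for finiteness of $d$, and that the continuity plus the strict monotonicity of $\tau\mapsto d(w,l_q(\tau))$ are simultaneously available, so that both the existence (via the intermediate value theorem) and the minimality of the arrival time hold on the nose.
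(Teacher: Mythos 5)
Your proposal is correct and takes essentially the same route as the paper's own proof: both apply Lemma~\ref{lemmin} with $M=D\times\R$ to get causal simplicity, continuity of $d$, and the Avez--Seifert (maximizing geodesic) property, then use Lemma~\ref{lemmin2}(2) for finiteness, Lemma~\ref{lemmin2}(1) for divergence of $d$ along $l_q$, and an intermediate-value argument to locate a point of $l_q$ at time-separation exactly $\ell$, which Avez--Seifert then joins to $w$ by a (necessarily timelike) maximizing geodesic. The only difference is expository: you spell out the arrival-time minimality via the strict monotonicity of $\tau\mapsto d(w,l_q(\tau))$ (reverse triangle inequality), a step the paper compresses into ``Avez--Seifert yields the result.''
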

\begin{proof}
Notice that, as $d$ is finite-valued from Lemma \ref{lemmin2}(2),
the assertion (iii) of Lemma \ref{lemmin} implies that
Avez-Seifert property holds, i.e. each two  points $w,w'\in
D\times \R$ which are  strictly causally related ($w< w'$), can be
connected by means of a causal geodesic of length  $d(w,w')\in (0,+\infty)$. Let
$\tau_0$ be the infimum of the $\tau\geq t_p$ such that $w\leq
(q,\tau)$. Clearly, $d(w,(q,\tau_0))=0$ (by the assertion (i) of
Lemma \ref{lemmin}, $w\leq (q,\tau_0)$ too) and  the property in
Lemma \ref{lemmin2}(1) plus the continuity of $d$ (assertion (ii)
in Lemma \ref{lemmin}) imply that the  time-separation between $w$ and some 
point of $l_q$ is $ \ell $. Thus, Avez-Seifert property yields the
result.
\end{proof}
As a consequence, the considerations about the generality of our
results in the lightlike case in comparison with the the previous
ones (Remark \ref{rrr}), can be extended to the case of Theorem
\ref{appl2}. Summing up, our conclusions about the obtained
results and required techniques are the following ones.
\begin{enumerate}
\item In the purely causal proof of Proposition
\ref{propchuleria}, SRC has been used to show the finiteness of
$d$ (Lemma \ref{lemmin2}), which is crucial for the Avez-Seifert
property.
 In comparison with the results in the previous subsection, the
limitations of this causal result  \ref{propchuleria} are:

(a) it does not explain when $(\R_u\times D\times \R, \,
g_{L_1}=\Pi_1^*du^2 + \Pi^*g_L)$ is causally simple in terms of
the stationary spacetime $(D\times\R,g_L)$,   and

(b) it does not allow a   result on the multiplicity of the connecting timelike geodesics. 

\item The limitation (a) is again remedied by SRC  (Theorem~\ref{appl1} applied to the stationary spacetime
$(\R_u\times D\times\R,g_{L_1})$).  For the limitation (b),  an
infinite-dimensional variational approach is
required\footnote{However, one could still find a result of
multiplicity in purely causal terms by using timelike homotopy
classes as in \cite{SaProcAMS}, which allows some sharp
conclusions on the behavior of the geodesics.}. In fact,  the
result  in Theorem \ref{main} (plus Remark
\ref{rapostilla}) was claimed in the proof of our main  Theorem
\ref{appl2}.  The formulation of the hypothesis of convexity with
the function $\phi$ allows to connect the geometric
interpretations with the variational approach, which uses a
functional with a penalization  term   constructed from $\phi$  (see \cite[Section 4]{bcgs}).

\item At any case, the statement and proof of our main result
(Theorem \ref{appl2} plus Remark \ref{rappl2}) requires both,
variational results  and  causal interpretations, including SRC.
This allows to obtain optimal analytical hypotheses. In fact, we
use only the overall hypothesis of the compactness of closed
symmetrized balls, which becomes natural even in the Riemannian
case (as explained in Remark \ref{rrr}(1)), and has a role clearly
revealed in Lemma \ref{lemmin2}(2)). Up to this ambient
hypothesis,  our   condition for the problem of causal connectedness
is both, necessary and sufficient.

 \item As a consequence, our
results improve all the previous references on the topic.
Essentially these references were obtained by using variational
methods, and achieved sufficient conditions for causal
connectedness. In general, typical analytic  conditions imply the
conditions in
 Proposition \ref{Rbetaconvexity2} which characterize  (together with Theorem~\ref{peq2} and Remark~\ref{timeequiv}) the time-convexity of $\partial D\times\R$. 

 In particular, Theorem~\ref{appl2} improves the
results in \cite[Sect. 4.3]{cym}, by dealing with manifolds with
boundary, and those in  \cite[Th. 1.6, 1.7]{gm} by giving the full
characterization of causal connectedness with natural geometric
interpretations. The results in \cite{bg} obtained by means
of a different approach based on a relation between geodesics and
Lagrangian systems, are also improved. In this reference,  the
existence of timelike geodesics between a point and a line  of the
boundary was proved, under time--convexity, only for a suitable
range of  values for the Lorentzian length $\ell$,  depending on
the metric coefficients.
\end{enumerate}
\begin{remark}
For further developments, the following possibilities are pointed
out. First, our approach based on SRC is potentially useful also
for other variational problems, such as periodic trajectories or
trajectories critical for another (time-independent) functionals,
see \cite{bs} and references therein.

However, we emphasize that only causal geodesics are studied by
using SRC. The problems which include  spacelike geodesics, as geodesic
connectedness, must introduce also other subtle techniques. It is
worth pointing out, in relation with the cases obtained here:

(a) domains such as $M^{a}_\epsilon, \epsilon>0$  in Kerr spacetime (see Subsection~\ref{kerrshell})  not only are
  not   space-convex but also  are not
geodesically connected \cite[Corollary 3]{FlSa},

(b) the full outer region of Schwarzschild spacetime, as well as
the outer region  of slow Kerr one (outside the black hole, which
include the ergosphere and, so, it is not fully stationary for
$a\neq 0$) is geodesically connected \cite{FlSa2}; the proof uses
different topological arguments introduced in
\cite{FlSaJournaldifferentialequations2002}, and

(c) for general standard  stationary spacetimes, geodesic
connectedness has been studied by a combination of variational and
causal methods in \cite{CFSadvances}  (see also \cite{caponiogelogra}, for the case of domains with boundary).

\smallskip

\noindent Even though such problems of geodesic connectedness do
not have a simple physical interpretation as those of {\em causal}
geodesic connectedness in this paper, they constitute and
excellent arena to study critical points curves for indefinite
functionals, with broad possibilities of applications.
\end{remark}

\section{Conclusions}

The problem of visibility of stellar objects (existence and
multiplicity of causal geodesics  connecting points and  world-lines)
under physically reasonable properties has been analyzed, being
the following ingredients relevant in its solution:

\begin{itemize}

\item Relativistic Fermat's principle, as known from the works by
Kovner \cite{Kov} and Perlick \cite{Per}, asserts that if a
connecting causal curve from a point to a observer (world-line) with a critical arrival time for the observer's proper time
exists, then it is a lightlike geodesic. However, in order to
ensure the existence of such a geodesic, a mixture of variational
techniques (critical point theory), topological elements
(Ljusternik-Schnirelmann theory) and geometrical equivalences
(stationary-to-Randers correspondence) in the framework of
Causality theory, has been used. In particular:

(i) The existence of connecting causal geodesics is  characterized
in terms of the geodesic connectedness of   the  associated Finsler
manifolds of Randers type. Therefore, the notion of convexity (for
domains of the spacetime) is analyzed
carefully.

(ii) The multiplicity of connecting geodesics (lensing) appears naturally either  due to curvature  (the geodesics of the associated Fermat metric present conjugate points, a well understood property in Finsler Geometry)
or to global
topological properties  (non-contractibility of the manifold).

\item We have considered only  stationary spacetimes   (or {\em strictly} stationary in the nomenclature of some references, as we are assuming that the causal character of the Killing vector field must remain timelike on all the spacetime).
Nevertheless, the applications to asymptotically flat spacetimes
make the results applicable in more general situations  which
include black holes, and the conformal invariance of most of the
techniques make them applicable even to cosmological models.

\item It is natural to consider the case of connecting geodesics
that  are confined in a spherical shell   of the spacetime and, so, large balls
in asymptotically flat spacetimes have been  especially studied.
The obtained results show under which circumstances  our intuition
on known spacetimes as Kerr's one can be transplanted to arbitrary
asymptotically flat spacetimes  (recall Proposition~\ref{positivemass}).

\item The stationary to Randers correspondence allows a better understanding of the notion of asymptotic flatness, connecting it with the asymptotic behaviour of a Finsler manifold and interpreting the role of the cohomology of the shift $\omega$ as 
 a significant geometric object in that notion.

\item The results include not only lightlike geodesics but also
timelike ones. For these  geodesics,  it is assumed that they arrive
in a prescribed lifetime. This becomes natural from both
viewpoints, the mathematical one (otherwise, the results on
multiplicity would become trivial) and the physical one (the
particles might disintegrate).

\item The revision  of the previous techniques in the literature
 shows  the limitations of each one,   as well as the optimality of the obtained
results.
\end{itemize}

\subsection*{Acknowledgment}
We would like to thank R. Bartolo for several discussions on a
preliminary version of this work.

\end{document}